\def\blue{\textcolor{blue}}
\def\red{\textcolor{red}}
\def\blue{\textcolor{blue}}
\tikzset{
	redline/.style={red, thick, ->, shorten >=1pt, shorten <=1pt}
}
\newcolumntype{V}{!{\vrule width 2pt}}
\numberwithin{equation}{section}
\def\blue{\textcolor{blue}}
\def\red{\textcolor{red}}
\def\blue{\textcolor{blue}}
\theoremstyle{plain}
\newtheorem{theorem}{Theorem}[section]
\newtheorem{conjecture}[theorem]{Conjecture}
\newtheorem{remark}[theorem]{Remark}
\newtheorem{lemma}[theorem]{Lemma}
\newtheorem{definition}[theorem]{Definition}
\definecolor{handred}{RGB}{220, 0, 0}
\definecolor{handgreen}{RGB}{0, 150, 0}
\def\des{\mathsf{des}}
\def\maj{\mathsf{maj}}
\def\exc{\mathsf{exc}}
\def\den{\mathsf{den}}
\def\st{\mathsf{st}}
\def\Inv{\mathsf{Inv}}
\def\Imv{\mathsf{Imv}}
\def\Des{\mathsf{Des}}
\def\Exc{\mathsf{Exc}}
\def\Nexc{\mathsf{Nexc}}
\def\Nexcp{\mathsf{Nexcp}}
\def\Excp{\mathsf{Excp}}
\def\inv{\mathsf{inv}}
\def\imv{\mathsf{imv}}
\def\green{\textcolor{blue}}
\begin{document}
	\begin{center}
		{\Large\bf Further refinements of  Euler-Mahonian statistics for multipermutations}
	\end{center}
	
	\begin{center}
		
		{\small Kaimei Huang,  Yongzhou Wen,
			Sherry H.F. Yan$^{*}$\footnote{$^*$Corresponding author. \\{\em E-mail address:}   hfy@zjnu.cn (S.H.F. Yan). }} 
		
		 Department of Mathematics,
		Zhejiang Normal University\\
		Jinhua 321004, P.R. China

	\end{center}

	\noindent {\bf Abstract.}
	Permutation statistics constitute a classical subject of enumerative combinatorics.  In her study of the genus zeta function,  Denert  discovered a new Mahonian statistic for permutations, which is called the Denert's statistic  ({\bf $\den$}) by Foata and Zeilberger. As natural extensions of the $r$-descent number ({\bf $r\des$}) and the $r$-major index ({\bf $r\maj$}) introduced by Rawlings,  Liu introduced    the $g$-gap $\ell$-level descent number  ({\bf $g\des_{\ell}$})  and the $g$-gap $\ell$-level  major index   ({\bf $g\maj_{\ell}$}) for permutations. 
	In this paper, we introduce  the $g$-gap $\ell$-level Denert's statistic ({\bf $g\den_{\ell}$})  and the $g$-gap $\ell$-level excedance number ({\bf $g\exc_{\ell}$}) for multipermutations, which serve as natural generalizations of the Denert's statistic ({\bf $\den$}) and the excedance number ({\bf $\exc$}) for multipermutations first introduced by Han.   By constructing   two  explicit bijections, we      establish the equidistribution of   the pairs $(g\exc_{\ell}, g\den_{h} )$ and $(g\des_{\ell}, g\maj_{\ell})$  over  multipermutations for all   $1\leq h\leq g+\ell$. Our result  provides a new proof of the equidistribution of the pairs    ($\des$, $\maj$) and ($\exc$, $\den$) over multipermutations originally  derived by Han and  enables us to confirm a recent conjecture posed by Huang-Lin-Yan.   Furthermore, we  demonstrate  that for
	all  $1\leq h\leq g+\ell$,  the pair  $(g\exc_\ell, g\den_{h})$ is $r$-Euler-Mahonian over   multipermutations of $M=\{1^k, 2^k, \ldots, n^k\}$ where $r=g+\ell-1$ and $k\geq 1$, which extends a recent novel result derived by Liu from permutations to multipermutations.

	\noindent {\bf Keywords}:  $g$-gap $\ell$-level descent number,  $g$-gap $\ell$-level major index,  $g$-gap $\ell$-level Denert's statistic, $g$-gap $\ell$-level  excedance number,  $r$-Euler-Mahonian statistic.

	\section{Introduction}
	Let $M=\{1^{k_1}, 2^{k_2}, \ldots, n^{k_n}\}$  be a multiset where $i^{k_i}$ denotes   $k_i$ occurrences of $i$.  Unless specified otherwise, we always assume that $M=\{1^{k_1}, 2^{k_2}, \ldots, n^{k_n}\}$ where $k_i\geq 1$ and $k_1+k_2+\cdots+k_n=m$. Let $\mathfrak{S}_{M}$ denote the set of all multipermutations  on $M$. For a positive integer $n$, let $[n]:=\{1, 2, \ldots, n\}$. When $M=[n]$, we simply write $\mathfrak{S}_{M}$ as $\mathfrak{S}_{n}$.

	Given a multipermutation  $w=\alpha_1 \alpha_2 \ldots \alpha_m\in \mathfrak{S}_{M}$, 
	the  {\em descent set}  of $w$ is defined   to be
	$$
	\Des(w)=\{i\in[m-1]\mid \alpha_i > \alpha_{i+1}\},
	$$
	and 
the  {\em descent number} of $w$, denoted by $\des(w)$,  is defined to be	the cardinality of its descent set. 
 The {\em major index} of $w$, denoted by $\maj(w)$, is defined to be
	$$
	\maj(w)=\sum\limits_{i\in \Des(w) } i.
	$$
	A pair $(i,j)$ is called an {\em inversion (resp.,~weakly inversion) pair} of $w$ if $i<j$ and $\alpha_i>\alpha_j$ (resp.,~$\alpha_i\geq\alpha_j$).  Let $\Inv(w)$ (resp.,~$\Imv(w)$) denote the set of all inversion (resp.,~weakly inversion) pairs of $w$. The  {\em inversion (resp.,~weakly inversion) number} of $w$ is defined as $\inv(w)=|\Inv(w)|$ (resp.,~$\imv(w)=|\Imv(w)|$).    For example, if  $w=4121155325$, then  $\Des(w)=\{1,3,7,8\}$, $\des(w)=4$, $\maj(w)=1+3+7+8=19$, $\inv(w)=13$ and $\imv(w)=20$. MacMahon's equidistribution theorem \cite{Mac} asserts  that  
	\begin{equation}\label{invmaj}
		\sum\limits_{w\in \mathfrak{S}_M}  q^{\inv(w)}=\sum\limits_{w\in \mathfrak{S}_M}q^{\maj(w)}. 
	\end{equation}
 A  statistic  is said to be  {\em Mahonian} if it is  equidistributed  with $\inv$ over $\mathfrak{S}_M$. 
	 MacMahon  type results for  other combinatorial objects constitute a classical subject of enumerative combinatorics and   have been investigated extensively  in the literature;  
	 see \cite{Chen1, Chen2, Liu1, Liu2, RW, Wilson, Yan} and the references therein. 
	
	As a natural generalization of  the descent number and the major index, 
	Liu~\cite{Liujcta} introduced the notions of the  $g$-gap $\ell$-level descent number  $g\des_\ell$ and  the $g$-gap $\ell$-level major index $g\maj_\ell$ for permutations.  Here we extend Liu's definitions from permutations to multipermutations. 
	Let $g,\ell\geq 1$. Given a multipermutation $w=\alpha_1\alpha_2\ldots \alpha_m\in \mathfrak{S}_{M}$, let 
	$$
	g\Des_\ell(w)=\{i\in [n-1]\mid \alpha_{i} \geq  \alpha_{i+1}+g, \alpha_{i+1} \geq \ell\},
	$$
	$$
	g\Inv_\ell(w)=\{(i,j)\in \Inv(w)\mid\alpha_{j}<\ell \,\, \mbox{or}\,\, \alpha_i<\alpha_j+g\}.
	$$
	An index $i$ is said to be an {\em $g$-gap $\ell$-level descent}  of $w$ if $i\in g\Des_\ell(w) $. The {\em $g$-gap $\ell$-level descent number} of $w$, denoted by $g\des_\ell(w)$, is defined to be the number of $g$-gap $\ell$-level descents of $w$.
	The {\em $g$-gap $\ell$-level major index} of $w$ is defined to be 
	\begin{equation*}
		g\maj_\ell(w) = \sum\limits_{i\in g\Des_\ell(w)}i +|g\Inv_\ell(w)|. 
	\end{equation*}
	For example, if we let $w=4121155325$, then  $2\Des_3(w)=\{7\}$, $2\des_3(w)=1$ and $2\Inv_3=\{(1,2),(1,3),(1,4),(1,5),(1,8),(1,9),(3,4),(3,5),(6,9),(7,9),(8,9)\}$. Thus $2\maj_3(w)$ is given by 
	$$
	\begin{array}{ll}
		2\maj_3(w)
		&=\sum\limits_{i\in 2\Des_3(w)}i +|2\Inv_3(w)|\\
		&=7+11\\
		&=18.
	\end{array}
	$$
	When $g=\ell=1$, the statistics $g\maj_{\ell}$ and $g\des_{\ell}$ reduce to the   major index   $\maj$  and the   descent number $\des$, respectively.
	The $r$-gap $1$-level major index  of $w$  is also referred as  {\em     $r$-major index}  of $w$, denoted by $r\maj(w)$,  and the $r$-gap $1$-level descent number  of $w$ is called the {\em $r$-descent number}  of $w$, denoted by $r\des(\pi)$. These  notations were first  introduced   by Rawlings \cite{Raw, Raw1}  who also established the equidistribution of   the statistics $r\maj$ and $\inv$   over $\mathfrak{S}_M$.  
	The $1$-gap $r$-level major index  of $w$ is called the {\em     $r$-level major index}  of $w$, denoted by $ \maj_r(w)$,  and the $1$-gap $r$-level descent  of $w$ is called the {\em $r$-level descent number}  of $w$, denoted by $ \des_{r}(w)$ in \cite{Liujcta}.
	\begin{definition}
		A pair  $(\st_1, \st_2)$  of permutation statistics is said to be {\bf $r$-Euler-Mahonian} over  $\mathfrak{S}_{M}$ if 
		$$
		\sum\limits_{w\in \mathfrak{S}_{M}}  t^{\st_1(w)}q^{\st_2(w)}=  \sum\limits_{w\in \mathfrak{S}_{M}}  t^{r\des(w)}q^{r\maj(w)}.
		$$
	\end{definition}
	The $1$-Euler-Mahonian statistic  is also referred as the {\em Euler-Mahonian} statistic. 
	Liu~\cite{Liujcta}  proved that  for all $g,\ell\geq 1$,  the pair $(g\des_{\ell}, g\maj_{\ell})$ is $r$-Euler-Mahonian
over permutations, that is, 	\begin{equation}\label{eq-r-Euler}
		\sum_{\pi\in \mathfrak{S}_n} t^{r\des(\pi)}q^{r\maj(\pi)} =\sum_{\pi\in \mathfrak{S}_n} t^{g\des_\ell(\pi)}q^{g\maj_{\ell}(\pi)},  
	\end{equation}
	where $r=g+\ell-1$.

	In the course of seeking  Euler-Mahonian statistics for multipermutations, Han \cite{Han1} extended  the {\em Denert's statistic}, originally  introduced by Denert \cite{Denert}  for  permutations in  her study of the genus zeta function,   to multipermutations.  
	Suppose that    $\bar{w}=x_1 x_2 \ldots x_m$ is the weakly increasing rearrangement of $w$.  An index $i$ with  $1\leq i < m$  is called an {\em excedance place}
	of $w$ if $\alpha_i > x_i$. Let $\Excp(w)$  denote the set of excedance places of $w$ and let $\exc(w)=|\Excp(w)|$ be the {\em excedance number} of $w$.  For instance, given   
	\begin{equation}\label{den}
	\begin{pmatrix}
		\bar{w}\\
		w
	\end{pmatrix}
	=
	\begin{pmatrix}
		\blue{\bf1}1\blue{\bf1}22\blue{\bf3}\blue{\bf4}555\\
		\blue{\bf4}1\blue{\bf2}11\blue{\bf5}\blue{\bf5}325 
	\end{pmatrix}, 
	\end{equation}
	 we have $\Excp(w)=\{1,3,6,7\}$  and $\exc(w)=4$.  Define $\Nexcp(w)=[m]\setminus\Excp(w)$ as   the set of non-excedance places of $w$. For $i\in[m]$, if $i\in\Excp(w)$, then $\alpha_i$ is called an {\em excedance letter} of $w$; otherwise, $i\in\Nexcp(w)$ and $\alpha_i$ is a {\em non-excedance letter} of $w$. Let $\Exc(w)$ and   $\Nexc(w)$ be the subsequences of $w$ formed by all the  excedance  and non-excedance letters of $w$, respectively. Then the {\em Denert's statistic} of $w$, denoted by $\den(w)$, is defined as
	$$
	\den(w)=\sum\limits_{i\in \Excp(w)}i+{\imv}(\Exc(w))+{\inv}(\Nexc(w)).
	$$
	Continuing with the example $w=4121155325$ in (\ref{den}), we have
	$\Exc(w)=4255$  and $\Nexc(w)=111325$.
	Then $\den(w)$ is given by 
	$$
	\begin{array}{ll}
		\den(w)
		&=\sum\limits_{i\in \Excp(w)}i+{\imv}(\Exc(w))+{\inv}(\Nexc(w))\\
		&=1+3+6+7+{\imv}(4255)+{\inv}(111325)\\
		&=17+2+1\\
		&=20.
	\end{array}
	$$

  Denert~\cite{Denert} proposed an intersting   conjecture which asserts that 
	$ (\exc, \den)$ is Euler-Mahonian on permutations, that is, 
	\begin{equation}\label{excden}
		\sum\limits_{\pi\in \mathfrak{S}_{n}}  t^{\exc(\pi)}q^{\den(\pi)}=  \sum\limits_{\pi\in \mathfrak{S}_{n}}  t^{\des(\pi)}q^{\maj(\pi)}.
	\end{equation}
	This conjecture was first proved by
	Foata and Zeilberger \cite{Foata-Zeilberger}, and    a bijective proof was later provided by Han \cite{Han}.   The following multipermutation version of~\eqref{excden}  was attributed to Han~\cite{Han1}:
	\begin{equation}\label{den-majword}
		\sum\limits_{w\in \mathfrak{S}_{M}}  t^{\des(w)}q^{\maj(w)} =  
		\sum\limits_{w\in \mathfrak{S}_M}  t^{\exc(w)}q^{\den(w)}.
	\end{equation}

	Let  $w=\alpha_1 \alpha_2 \ldots \alpha_m\in \mathfrak{S}_M$ be a multipermutation whose weakly increasing  rearrangement is $\bar{w}=x_1 x_2 \ldots x_m$. An index $i\in[m]$ is called an {\em $g$-gap $\ell$-level  excedance place}  and $\alpha_i$ is called an {\em $g$-gap $\ell$-level  excedance letter} if $\alpha_i\geq x_i+g$ and $\alpha_i\geq \ell$.   We denote by $g\Excp_\ell(w)$ the set of $g$-gap $\ell$-level excedance places of $w$ and by   $g\Nexcp_\ell(w)$   the set of non-$g$-gap-$\ell$-level excedance places of $w$, respectively.  
	Assume that  $g\Excp_\ell(w)=\{i_1, i_2, \ldots, i_k\}$ with $i_1<i_2<\cdots<i_k$ and  $g\Nexcp_\ell(w)=\{j_1, j_2, \ldots, j_{m-k}\}$
	with $j_1<j_2<\cdots<j_{m-k}$.  Define
	$ 
	g\Exc_\ell(w)=\alpha_{i_1}\alpha_{i_2}\ldots \alpha_{i_k},
	$ 
	and 
	$ 
	g\Nexc_\ell(w)=\alpha_{j_1}\alpha_{j_2}\ldots \alpha_{j_{m-k}}.
	$ 
		\begin{definition}
		The {\bf $g$-gap $\ell$-level  Denert's statistic} of $w$, denoted by $g\den_{\ell}(w)$, is defined as
		$$
		g\den_{\ell}(w)=\sum\limits_{i\in g\Excp_{\ell}(w)} (i+B^{g}_i(w)) +{\imv}(g\Exc_{\ell}(w))+{\inv}(g\Nexc_{\ell} (w)), 
		$$
		where $B^{g}_i(w)=|\{j \mid \alpha_i -g< x_j < \alpha_i\}|$.
	\end{definition}
For example, if  
\begin{equation}\label{eg:gdenl}
	\begin{pmatrix}
		\bar{w}\\
		w
	\end{pmatrix}
	=
	\begin{pmatrix}
		\blue{\bf1}1122\blue{\bf3}\blue{\bf4}556\\
		\blue{\bf4}2311\blue{\bf5}\blue{\bf6}125 
	\end{pmatrix},
\end{equation}	
then we have $2\Excp_4(w)=\{1,6,7\}$, $2\Exc_4(w)=456$ and $2\Nexc_4(w)=2311125$. Hence, $2\den_4(w)$ is given by
$$
\begin{array}{ll}
	2\den_4(w)&=\sum\limits_{i\in 2\Excp_4(w)} (i+B^{2}_i(w)) +{\imv}(2\Exc_4(w))+{\inv}(2\Nexc_4 (w))\\
	&=(1+1)+(6+1)+(7+2)+{\imv}(456)+{\inv}(2311125)\\
	&=18+0+7\\
	&=25.
\end{array}
$$	
	\begin{definition}
		The {\bf  $g$-gap $\ell$-level excedance number} of $w$,  denoted by $g\exc_\ell(w)$,  is defined as
		$$g\exc_\ell(w)=|\{i\mid \alpha_i\geq x_i+g, x_i\geq \ell\}|.$$ 
	\end{definition}
Continuing with the example $w=4231156125$ in~\eqref{eg:gdenl}, we have $2\exc_4(w)=1$.
Clearly, $g\den_\ell$ and $g\exc_\ell$ reduces to  $\den$ and $\exc$ respectively  when $g=\ell=1$.

\begin{remark}
It is easily seen that 	for any $\pi \in \mathfrak{S}_n$,   we have
	$$
	g\den_{\ell}(\pi)=\sum\limits_{i\in g\Excp_{\ell}(\pi)} (i+g-1) +{\inv}(g\Exc_{\ell}(\pi))+{\inv}(g\Nexc_{\ell} (\pi)), 
	$$
	and 
	$$
	 g\exc_\ell(\pi)=|\{i\mid \pi_i\geq i+g, i\geq \ell\}|,
	$$
	 which coincide with the statistics $g\den_{\ell}$ and $g\exc_{\ell}$ employed  in \cite{Huang-yan AAM, Liujcta}. 
	\end{remark}

	The $r$-gap $1$-level Denert's statistic    of $w$ is called the {\em     $r$-gap Denert's statistic }  of $w$, denoted by $r\den(w)$,  and the $r$-gap $1$-level excedance number of $w$ is called the {\em $r$-gap excedance number}  of $w$, denoted by $r\exc(w)$.   Similarly,  The $1$-gap $r$-level Denert's statistic    of $w$ is called the {\em     $r$-level Denert's statistic }  of $w$, denoted by $ \den_r(w)$,  and the $1$-gap $r$-level excedance  number of $w$ is called the {\em $r$-level excedance number}  of $w$, denoted by $ \exc_r(w)$.

Motivated by finding refinments  of (\ref{excden}), Liu \cite{Liujcta}  proved that
\begin{equation}\label{liu-rden}
	\sum_{\pi\in \mathfrak{S}_n}t^{r\des(\pi)}q^{r\maj(\pi)}=\sum_{\pi\mathfrak{S}_n}t^{r\exc(\pi)} q^{r\den(\pi)}
\end{equation}
and further conjectured that 
\begin{equation}\label{HuangJCTA}
	\sum_{\pi\in \mathfrak{S}_n}t^{r\des(\pi)}q^{r\maj(\pi)}=\sum_{\pi\mathfrak{S}_n}t^{ \exc_r(\pi)} q^{\den_r(\pi)} .
\end{equation}
This conjecture  was later confirmed by Huang-Lin-Yan \cite{Huang-lin-yan jcta}. 
As  a further  extension of (\ref{liu-rden}) and  (\ref{HuangJCTA}), Huang-Yan \cite{Huang-yan AAM} proved that for all $g,\ell\geq 1$,   
\begin{equation}\label{HuangAAM}
	\sum_{\pi\in \mathfrak{S}_n}t^{r\des(\pi)}q^{r\maj(\pi)}=\sum_{\pi\mathfrak{S}_n}t^{ g\exc_\ell(\pi)} q^{g\den_\ell(\pi)}=\sum_{\pi\mathfrak{S}_n}t^{ g\exc_\ell(\pi)} q^{g\den_{g+\ell}(\pi)},
\end{equation}
where $r=g+\ell-1$.   It should be noted  that the first equality in (\ref{HuangAAM}) was originally conjectured by Liu \cite{Liujcta}. 
 Combining (\ref{eq-r-Euler}) and (\ref{HuangJCTA}), we derive that 
\begin{equation}\label{eq-des}
	\sum_{\pi\in \mathfrak{S}_n}t^{ \des_r(\pi)}q^{ \maj_r(\pi)}=\sum_{\pi\mathfrak{S}_n}t^{  \exc_r(\pi)} q^{ \den_r(\pi)}. 
\end{equation}
 Very recently, Liu \cite{Liu3} provided  a new bijective proof of the equidistribution of $(\des, \maj)$ and $(\exc, \den)$ over $\mathfrak{S}_n$. As an application, he also derived the following generalization of (\ref{HuangAAM}).
\begin{theorem}[Liu~\cite{Liu3}]\label{th-Liu3}
	For all $g, \ell\geq 1$ and $1\leq h\leq g+\ell$, 
	the pair $(g\exc_{\ell}, g\den_{h})$ is $r$-Euler-Mahonian over $\mathfrak{S}_n$, that is, 
	\begin{equation}\label{eq-liu2}
		\sum_{\pi\in\mathfrak{S}_n} t^{r\des(\pi)} q^{r\maj(\pi)}
		=\sum_{\pi\in \mathfrak{S}_n} t^{g\exc_\ell(\pi)}  q^{g\den_h(\pi)},
	\end{equation}
	where $r=g+\ell-1$. 
\end{theorem}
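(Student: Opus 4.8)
The plan is to derive Theorem~\ref{th-Liu3} from Liu's $r$-Euler-Mahonian identity~\eqref{eq-r-Euler} together with the intermediate equidistribution
\begin{equation}\label{eq-proposal-key}
	\sum_{\pi\in\mathfrak{S}_n} t^{g\exc_\ell(\pi)}\,q^{g\den_h(\pi)}
	=\sum_{\pi\in\mathfrak{S}_n} t^{g\des_\ell(\pi)}\,q^{g\maj_\ell(\pi)},
	\qquad 1\le h\le g+\ell .
\end{equation}
Once~\eqref{eq-proposal-key} is in hand, its right-hand side equals $\sum_{\pi\in\mathfrak{S}_n}t^{r\des(\pi)}q^{r\maj(\pi)}$ with $r=g+\ell-1$ by~\eqref{eq-r-Euler}, turning~\eqref{eq-proposal-key} into~\eqref{eq-liu2}. (Equivalently, \eqref{eq-proposal-key} is the specialization $M=[n]$ of the multipermutation equidistribution between $(g\exc_\ell,g\den_h)$ and $(g\des_\ell,g\maj_\ell)$ established in the sequel, so Theorem~\ref{th-Liu3} also drops out of our main result; what follows is a more self-contained route, in the spirit of Liu~\cite{Liu3}.)

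The first observation is that, as a function on $\mathfrak{S}_n$, the statistic $g\den_h$ does not depend on $h$ for $1\le h\le g+1$. Indeed, if $i\in g\Excp_{h}(\pi)$ then $\pi_i\ge x_i+g=i+g\ge g+1\ge h$, so the level constraint ``$\alpha_i\ge h$'' in the definition of $g\Excp_h$ is automatic; hence $g\Excp_h(\pi)=\{i:\pi_i\ge i+g\}$ is independent of $h$ on this range, and since for permutations the correction term $B^g_i$ equals $g-1$ identically and $g\Exc_h(\pi)$, $g\Nexc_h(\pi)$ are determined by $g\Excp_h(\pi)$, the whole three-part sum defining $g\den_h(\pi)$ is $h$-independent for $h\le g+1$. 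The statistic $g\exc_\ell$ keeps its own fixed subscript throughout and is unaffected. Since~\eqref{HuangAAM} already establishes~\eqref{eq-proposal-key} for $h=\ell$ and $h=g+\ell$, there is always at least one value of $h$ for which~\eqref{eq-proposal-key} is known.

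It remains to treat the range $g+1<h\le g+\ell$. Passing from level $h$ to level $h+1$ can demote exactly one position — the one holding the letter $h$, and only when that position is at most $h-g$ — from $g\Excp$ to $g\Nexc$, thereby moving the letter $h$ from the subword $g\Exc$ to the subword $g\Nexc$ and perturbing $\imv(g\Exc)$, $\inv(g\Nexc)$ and the place sum $\sum_i(i+B^g_i)$. I would construct, for each $h\in\{g+1,\ldots,g+\ell-1\}$, an explicit bijection $\theta_h:\mathfrak{S}_n\to\mathfrak{S}_n$ that fixes $g\exc_\ell$ and satisfies $g\den_{h+1}(\theta_h(\pi))=g\den_h(\pi)$, by a local rearrangement compensating precisely for this reclassification — a Rawlings-type ``transfer'' of the letter $h$ between the two subwords, modelled on the level-shifting bijections of Huang--Yan~\cite{Huang-yan AAM} behind~\eqref{HuangAAM} and of Huang--Lin--Yan~\cite{Huang-lin-yan jcta}. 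Composing the $\theta_h$ shows that the joint distribution of $(g\exc_\ell,g\den_h)$ over $\mathfrak{S}_n$ is the same for every $h\in[1,g+\ell]$; anchoring at $h=g+\ell$ (or $h=\ell$) via~\eqref{HuangAAM} and invoking~\eqref{eq-r-Euler} then yields~\eqref{eq-liu2}.

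The main obstacle is the construction and verification of $\theta_h$. One must exhibit a single local move on $\pi$ that keeps $g\exc_\ell$ fixed — a condition phrased through the sorted word, hence sensitive to positions — while producing exactly the change in the three-part Denert sum forced by the demotion, with no collateral change to the excedance status of any other position. Balancing the place sum against the two inversion counts and checking bijectivity is the delicate point; a clean combinatorial description of the move, rather than a bare generating-function manipulation, is what will make the bookkeeping tractable, as in the cognate arguments of \cite{Huang-yan AAM, Huang-lin-yan jcta, Liu3}.
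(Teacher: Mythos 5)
Your reduction is sound as far as it goes: the ``intermediate equidistribution'' you posit is exactly \eqref{eq-liu3}, i.e.\ the $M=[n]$ case of Theorem \ref{thm-den-r-h}, and combining it with Liu's \eqref{eq-r-Euler} does yield \eqref{eq-liu2}. That is in substance how the paper obtains the statement: Theorem \ref{th-regular} specialized to $k=1$ is precisely \eqref{eq-liu2}, and its proof runs through Theorem \ref{thm-den-r-h} together with the $\Phi^{\maj}_{g,\ell}$-based induction. So the route you mention parenthetically (``drops out of our main result'') is correct and coincides with the paper's.

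The self-contained argument that occupies the rest of your proposal, however, has a genuine gap at its only nontrivial step. Your preliminary observations are fine: for permutations $g\den_h$ is indeed $h$-independent on $1\le h\le g+1$ (since $\pi_i\ge i+g\ge g+1\ge h$ forces the level condition and $B^g_i\equiv g-1$), \eqref{HuangAAM} together with \eqref{eq-r-Euler} supplies anchors at $h=\ell$ and $h=g+\ell$, and your description of what changes when $h$ is raised to $h+1$ (the unique position carrying the letter $h$ is demoted exactly when it is at most $h-g$) is accurate. But the whole weight of the argument then rests on the level-shifting bijections $\theta_h$ satisfying $g\exc_\ell(\theta_h(\pi))=g\exc_\ell(\pi)$ and $g\den_{h+1}(\theta_h(\pi))=g\den_h(\pi)$, and these are never constructed: you specify only the effect they must have and explicitly defer ``the construction and verification.'' Exhibiting such a move --- one that simultaneously repairs the place sum $\sum_i(i+B^g_i)$, $\imv(g\Exc_h)$ and $\inv(g\Nexc_h)$, leaves $g\exc_\ell$ and the excedance status of every other position untouched, and is invertible --- is precisely the hard content, of the same order of difficulty as the insertion bijections the paper builds. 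Note also that the paper never needs to compare different levels $h$ directly: for each fixed $h$ it constructs the insertion map $\Phi^{\den}_{g,h}$ and shows (Theorem \ref{th-Phi-den}, via Lemma \ref{lem-phi-den3}, which is where the hypothesis $h\le g+\ell$ actually enters) that the distribution of $(g\exc_\ell,g\den_h)$ is governed by the single partition condition \eqref{eq-Phiden2}, so all admissible $h$ are matched at once against the one $\maj$-side bijection. As written, your proposal is a correct reduction plus a plan for the key lemma, not a proof of it.
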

In views of (\ref{eq-r-Euler}) and (\ref{eq-liu2}), we derive that
\begin{equation}\label{eq-liu3}
	\sum_{\pi\in\mathfrak{S}_n} t^{g\des_{\ell}(\pi)} q^{g\maj_\ell(\pi)}
	=\sum_{\pi\in \mathfrak{S}_n} t^{g\exc_\ell(\pi)}  q^{g\den_h(\pi)}.
\end{equation}
for all $1\leq h\leq g+\ell$. 

In analogy to (\ref{liu-rden}) derived by Liu \cite{Liujcta}, 	 Huang-Lin-Yan \cite{Huang-lin-yan2} proved that 	for all $r\geq 1$,   the pair $(r\exc, r\den)$ is $r$-Euler-Mahonian over $\mathfrak{S}_M$, that is,  
	\begin{equation}\label{eq-Huang-JCTA2}
		\sum_{w\in\mathfrak{S}_M} t^{r\des(w)} q^{r\maj(w)}
		=\sum_{w\in \mathfrak{S}_M} t^{r\exc(w)}  q^{r\den(w)}.			
	\end{equation}	
	This is a   natural refinement of ~\eqref{den-majword} derived by Han~\cite{Han1}.  
In oder to 	generalize   (\ref{eq-des}) to multipermutations,   Huang-Lin-Yan \cite{Huang-lin-yan2}  further proposed the following  conjecture.   
	\begin{conjecture}[Huang--Lin--Yan~\cite{Huang-lin-yan2}]\label{con-rden-word}
		For all $r\geq 1$, we have 
		\begin{equation}
			\sum_{w\in\mathfrak{S}_M} t^{\des_r(w)} q^{\maj_r(w)}
			=\sum_{w\in \mathfrak{S}_M} t^{\exc_r(w)}  q^{\den_r(w)}.			
		\end{equation}	
	\end{conjecture}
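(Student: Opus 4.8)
The plan is to obtain the conjecture as a special case of a single general equidistribution over multipermutations: for all $g,\ell\ge 1$ and all $1\le h\le g+\ell$,
\begin{equation}\label{eq-hly-goal}
\sum_{w\in\mathfrak{S}_M} t^{g\des_\ell(w)}q^{g\maj_\ell(w)}=\sum_{w\in\mathfrak{S}_M} t^{g\exc_\ell(w)}q^{g\den_h(w)}.
\end{equation}
Specializing \eqref{eq-hly-goal} to $g=1$, $\ell=r$ and $h=r$, and invoking the definitional identifications $\des_r=1\des_r$, $\maj_r=1\maj_r$, $\exc_r=1\exc_r$ and $\den_r=1\den_r$ recorded above, turns \eqref{eq-hly-goal} into precisely the claimed identity; the level $h=r$ is admissible since $1\le r\le r+1=g+\ell$. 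So it suffices to prove \eqref{eq-hly-goal} over an arbitrary $\mathfrak{S}_M$.

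I would prove \eqref{eq-hly-goal} by constructing two bijections on $\mathfrak{S}_M$. The first, $\Phi\colon\mathfrak{S}_M\to\mathfrak{S}_M$, should satisfy $g\des_\ell(w)=g\exc_\ell(\Phi(w))$ and $g\maj_\ell(w)=g\den_{h_0}(\Phi(w))$ for one distinguished level $h_0$ (say $h_0=\ell$, or an endpoint $h_0\in\{1,g+\ell\}$, whichever makes the recursion cleanest). It should be built in the spirit of Han's bijection behind \eqref{den-majword} \cite{Han1} and of Liu's recent bijective proof \cite{Liu3} of $(\des,\maj)\sim(\exc,\den)$ over $\mathfrak{S}_n$, but refined so as to carry the full $g$-gap $\ell$-level statistics and adapted to repeated letters: build $\Phi(w)$ by inserting the entries of $w$ one at a time---in an order read off from the weakly increasing rearrangement $\bar w$---into a growing word, so that each newly created $g$-gap $\ell$-level descent, together with the block of $g$-gap $\ell$-level inversions it carries in $g\maj_\ell$, is converted into a prescribed shift of an excedance place and into the $\imv$, $\inv$ and $B^g$-correction contributions that constitute $g\den_{h_0}$. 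Threading the gap-$g$ and level-$\ell$ thresholds through the insertion rule is exactly what forces the two statistics to match; taking $g=\ell=1$ should recover (a form of) Han's bijection and hence yield the new proof of \eqref{den-majword} mentioned in the introduction.

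The second bijection, $\Theta_h\colon\mathfrak{S}_M\to\mathfrak{S}_M$ for $1\le h<g+\ell$, should fix $g\exc_\ell$ while sending $g\den_h\mapsto g\den_{h+1}$; composing appropriate $\Theta_h$'s with $\Phi$ then establishes \eqref{eq-hly-goal} for every admissible $h$, in particular for the value $h=r$ needed above. The guiding observation is that raising the level by one changes only whether the excedance letters equal to $h$ retain their $g$-gap $h$-level excedance status, and perturbs the $B^g_i$-corrections of letters whose values lie within the gap $g$ of that threshold; such purely local changes can be absorbed by permuting the equal letters sitting in a bounded value window, which leaves $g\exc_\ell$ untouched.

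The principal obstacle, compared with the permutation setting, is the presence of repeated letters. For a permutation the excedance places are read off by comparing $\pi_i$ with $i$ and the insertion arguments are essentially rigid; for a multipermutation one compares $\alpha_i$ with the $i$-th smallest entry $x_i$ of $\bar w$, so ties---and near-ties, i.e. values within the gap $g$ of $x_i$---must be broken, and the correction term $B^g_i(w)=|\{j\mid \alpha_i-g<x_j<\alpha_i\}|$ in $g\den_\ell$ is precisely the device that records them. The maps $\Phi$ and $\Theta_h$ must be arranged so that these $B^g$-corrections on the Denert side are produced letter by letter by the matching features on the $(g\des_\ell,g\maj_\ell)$ side, and so that the maps stay well-defined and invertible even among equal letters (preservation of the underlying multiset, unambiguous inverse insertion). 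I expect this bookkeeping with equal letters and the $B^g_i$ terms to be the technical heart of the whole argument.
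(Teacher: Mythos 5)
Your reduction of the conjecture to the general equidistribution of $(g\des_\ell, g\maj_\ell)$ and $(g\exc_\ell, g\den_h)$ over $\mathfrak{S}_M$ for all $1\le h\le g+\ell$, followed by the specialization $g=1$, $h=\ell=r$, is exactly what the paper does (this is Theorem \ref{thm-den-r-h} together with the sentence after it), and that step is fine. The gap is in your proposed proof of the general identity itself: everything past the reduction describes bijections by their desired properties rather than constructing them. You posit a map $\Phi$ on $\mathfrak{S}_M$ carrying $(g\des_\ell,g\maj_\ell)$ to $(g\exc_\ell,g\den_{h_0})$ ``in the spirit of Han and Liu,'' but the paper explicitly remarks that Liu's bijection is not applicable to multipermutations, so the existence of such a refinement is precisely the hard content and cannot be assumed. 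Likewise for the level-raising maps $\Theta_h$ fixing $g\exc_\ell$ and sending $g\den_h$ to $g\den_{h+1}$: the justification offered (absorbing the change ``by permuting the equal letters sitting in a bounded value window'') is a heuristic. Raising $h$ by one can move a letter into or out of $g\Excp_h(w)$, which changes the positional sum $\sum_{i\in g\Excp_h(w)}(i+B^g_i(w))$ by an amount depending on the \emph{position} $i$, not only on a bounded window of values, and it redistributes inversion pairs between $\imv(g\Exc_h(w))$ and $\inv(g\Nexc_h(w))$ in a way that is not local. Nothing in the proposal controls these effects, so neither bijection is established.

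For contrast, the paper avoids any direct bijection between the two pairs of statistics. It builds two insertion bijections $\Phi^{\den}_{g,h}$ and $\Phi^{\maj}_{g,\ell}$ from $\mathfrak{S}_{M'}\times\mathcal{P}(k_n,m')$ to $\mathfrak{S}_M$ (inserting all $k_n$ copies of the largest letter $n$), shows that each adds $|\lambda|$ to the relevant Mahonian statistic, and---crucially---shows that the condition on $\lambda$ governing whether the Eulerian statistic increases from $s$ to $t$ is the \emph{same} inequality $\lambda_{t-s}\ge t+\delta_\ell+\gamma_g\ge\lambda_{t-s+1}$ on both sides; the theorem then follows by induction on $n$ with a term-by-term comparison of generating functions. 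The independence from $h$ falls out because this threshold does not involve $h$, so no level-shifting map is needed. Your plan is not unreasonable as a research program, but as written the two bijections that carry all of the content are missing, so the proposal does not constitute a proof.
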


 In this paper, we shall  first  establish   two  bijections between $\mathfrak{S}_{M'}\times \{0, 1,\ldots, m'\}$ and $\mathfrak{S}_{M}$, where $M'=M\setminus \{n^{k_n}\}$  and $m'=m-k_n$. Our bijections  allow   us to 
 derive a common   generalization of     of (\ref{eq-liu3})  as follows. 
 
 	\begin{theorem}\label{thm-den-r-h}
 	For all $g,\ell\geq 1$ and $1\leq h\leq g+\ell$, we have 
 	\begin{equation}\label{eq-den-r-h}
 		\sum_{w\in\mathfrak{S}_M} t^{g\des_\ell(w)} q^{g\maj_\ell(w)}
 		=\sum_{w\in \mathfrak{S}_M} t^{g\exc_\ell(w)}  q^{g\den_h(w)}.			
 	\end{equation}	
 \end{theorem}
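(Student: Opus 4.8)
The plan is to prove Theorem~\ref{thm-den-r-h} by induction on $n$, the number of distinct letters in $M$, using the two bijections between $\mathfrak{S}_{M'}\times\{0,1,\dots,m'\}$ and $\mathfrak{S}_M$ announced just before the statement (here $M'=M\setminus\{n^{k_n}\}$ and $m'=m-k_n$). The base case $n=1$ is trivial since $\mathfrak{S}_M$ is a single multipermutation and both sides equal $1$. For the inductive step, the key is to track precisely how each of the four statistics $g\des_\ell$, $g\maj_\ell$, $g\exc_\ell$, $g\den_h$ changes when one passes from a multipermutation $w'\in\mathfrak{S}_{M'}$ to a multipermutation $w\in\mathfrak{S}_M$ obtained by inserting the $k_n$ copies of the new largest letter $n$ into $w'$ in a prescribed way governed by an integer parameter in $\{0,1,\dots,m'\}$.

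First I would set up the \emph{descent-side} bijection $\Phi\colon\mathfrak{S}_{M'}\times\{0,\dots,m'\}\to\mathfrak{S}_M$: given $(w',p)$, insert the block $n\,n\,\cdots\,n$ ($k_n$ copies) into $w'$ at a position determined by $p$ (for instance, so that exactly $p$ letters of $w'$ lie weakly to one side, with the standard "cyclic shift'' trick used for $r$-major-type statistics), and verify that
\begin{equation*}
g\des_\ell(w)=g\des_\ell(w')+\varepsilon(p),\qquad g\maj_\ell(w)=g\maj_\ell(w')+f(p),
\end{equation*}
where $\varepsilon(p)\in\{0,1\}$ records whether a new $g$-gap $\ell$-level descent is created and $f(p)$ runs over a complete residue-style interval as $p$ ranges over $\{0,\dots,m'\}$, so that summing over $p$ multiplies the generating function by the appropriate factor $\sum_{p}t^{\varepsilon(p)}q^{f(p)}$. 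Since all $k_n$ inserted letters equal $n$ and $n$ exceeds every letter of $w'$, the newly created inversion pairs and descents involving the block are easy to count and depend only on the insertion position, which makes the bookkeeping clean. Then I would set up the parallel \emph{excedance-side} bijection $\Psi\colon\mathfrak{S}_{M'}\times\{0,\dots,m'\}\to\mathfrak{S}_M$ and show that inserting the block of $n$'s at the position indexed by $p$ changes $g\exc_\ell$ and $g\den_h$ by exactly the same pair of increments $\varepsilon(p)$ and $f(p)$ — here the crucial point is that inserting copies of the maximal letter always produces $g$-gap $\ell$-level excedance letters (each inserted $n$ sits far above its position in the sorted word), and the correction term $B^g_i$ together with $\imv(g\Exc_h)$ absorbs the shift uniformly for every $h$ with $1\le h\le g+\ell$, which is precisely the source of the $h$-independence.

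The main obstacle, I expect, is verifying that the Denert-side increment is genuinely independent of $h$ throughout the range $1\le h\le g+\ell$, i.e.\ that the extra contributions from $B^g_i(w)$ and from $\imv(g\Exc_h(w))$ conspire to cancel the $h$-dependence of which copies of $n$ (and which old letters) get classified as $g$-gap $h$-level excedance letters. Concretely, changing $h$ can move a borderline letter between $g\Exc_h$ and $g\Nexc_h$; one must check that each such move alters $\sum_i(i+B^g_i)$ and the two inversion/weak-inversion counts by offsetting amounts, exactly as in the permutation case treated in Theorem~\ref{th-Liu3} (Liu~\cite{Liu3}), but now with repeated letters, where $\imv$ rather than $\inv$ appears on the excedance part. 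Once that invariance is in hand, combining the two bijections $\Phi$ and $\Psi$ gives a bijection $\Psi\circ\Phi^{-1}\colon\mathfrak{S}_M\to\mathfrak{S}_M$ carrying $(g\des_\ell,g\maj_\ell)$ to $(g\exc_\ell,g\den_h)$, and the theorem follows; as a byproduct one recovers \eqref{den-majword} (Han~\cite{Han1}) at $g=\ell=h=1$ and Conjecture~\ref{con-rden-word} of Huang--Lin--Yan by specializing $g=1$, $\ell=h=r$.
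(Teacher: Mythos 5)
Your overall strategy -- induction on $n$, inserting the copies of the largest letter $n$ into $w'\in\mathfrak{S}_{M'}$ and tracking matching increments of $(g\des_\ell,g\maj_\ell)$ on one side and $(g\exc_\ell,g\den_h)$ on the other -- is indeed the paper's strategy, but the proposal has a concrete gap already at the level of the parameter space. You propose bijections $\mathfrak{S}_{M'}\times\{0,1,\dots,m'\}\to\mathfrak{S}_M$ that insert the block $n\,n\cdots n$ ($k_n$ copies) at a single position indexed by $p$. For $k_n>1$ this cannot be a bijection: the left-hand side has cardinality $(m'+1)\,|\mathfrak{S}_{M'}|$ while $|\mathfrak{S}_M|={m\choose k_n}|\mathfrak{S}_{M'}|$, and a contiguous block insertion only reaches multipermutations in which all the $n$'s are adjacent. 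The paper's bijections are instead defined on $\mathfrak{S}_{M'}\times\mathcal{P}(k_n,m')$, where $\mathcal{P}(k_n,m')$ is the set of partitions with $k_n$ parts each at most $m'$: the $n$'s are inserted one at a time, each insertion governed by a label taken from a statistic-dependent labeling of the spaces, and the multiset of labels used forms the partition $\lambda$. Monotonicity lemmas ($y_{b+1}\le y_b$) are then needed to make this sequence-of-insertions construction invertible. Relatedly, your factor $\sum_p t^{\varepsilon(p)}q^{f(p)}$ would contribute $[m'+1]_q$ rather than the required ${m\brack k_n}$.

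Two further points would need substantial work even after fixing the parameter space. First, on the Denert side the passage from $w'$ to $w$ is not an ``insertion'' at all: to achieve the exact increment $g\den_h(u)=g\den_h(w)+c$ the paper must replace a letter by $n$ and then shift the displaced values along a carefully chosen chain of excedance letters and then through the subsequence of non-excedance letters (Case 2 of the map $\phi_{n,g,h}$, with its Facts 1--2 and Claims 1--3 about inversion counts); this is the technical core of the argument and is not addressed by the ``cyclic shift trick'' you invoke. The $h$-independence you correctly flag as the main obstacle is not resolved by a cancellation between $B^g_i$ and $\imv(g\Exc_h)$; it comes from the fact that, when $h\le g+\ell$, the number of $g$-gap $h$-level excedance letters lying weakly right of the first position with sorted value $\ell$ equals $g\exc_\ell(w)$, so the threshold label separating ``no new excedance'' from ``new excedance'' is $g\exc_\ell(w)+\delta_\ell+\gamma_g$, involving only $g$ and $\ell$. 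Second, the composite $\Psi\circ\Phi^{-1}$ does not carry $(g\des_\ell,g\maj_\ell)$ to $(g\exc_\ell,g\den_h)$ pointwise: whether an insertion with label $c$ creates a new descent (resp.\ excedance) depends on the \emph{current} value of $g\des_\ell(w')$ (resp.\ $g\exc_\ell(w')$), and these generally differ for the same $w'$. One must first use the induction hypothesis to match $w'$ with $g\des_\ell(w')=s$ against $w''$ with $g\exc_\ell(w'')=s$, and then sum over $s\le t$ with the common constraint $\lambda_{t-s}\ge t+\delta_\ell+\gamma_g\ge\lambda_{t-s+1}$; this interleaving of the induction hypothesis with the insertion data is essential and is missing from the proposal. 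Finally, the base case $n=1$ does not suffice: the refined properties of the bijections are only available for $n\ge g+\ell$, so the range $n<g+\ell$ must be handled separately (there both descent and excedance counts vanish and one shows $g\den_h$ is Mahonian).
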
 
From Table \ref{table1}, we can easily  verify  that 
$$
\begin{array}{lll}
	\sum\limits_{w\in \mathfrak{S}_{\{{1,2^2,3^2}\}}}  t^{\des_2(w)}q^{\maj_2(w)}
	&=(1+2q+3q^2+2q^3+q^4)+t(q^2+3q^3+5q^4+5q^5+3q^6+q^7)\\
	&+t^2(q^6+q^7+q^8)\\
	&&\\
	&=\sum\limits_{w\in \mathfrak{S}_{\{{1,2^2,3^2}\}}}  t^{\exc_2(w)}q^{\den_h(w)} 
\end{array}
$$
for all $1\leq h\leq 3$.

Setting $g=1$ and $h=\ell=r$ in Theorem \ref{thm-den-r-h} yields  a proof of Conjecture \ref{con-rden-word}.  Putting   $g=\ell=h=1$ in Theorem \ref{thm-den-r-h}  provides  a new proof of the equidistribution of  $(\des, \maj)$ and $(\exc, \den)$ originally derived by Han \cite{Han1}.

\begin{table}[htb]\label{table1}
	\setlength\tabcolsep{6pt}   
	\begin{center}
		\caption{The values of the pairs ($2\des$, $2\maj$), ($\des_2$, $\maj_2$), ($\exc_2$, $\den_2$), ($\exc_2$, $\den$) and ($\exc_2$, $\den_3$) of each multipermutaion in $\mathfrak{S}_{\{1,2^2,3^2\}}$.}
		\vspace{1em}
		\fontsize{10}{8}\selectfont
		\begin{tabular}{cccccccc}
			\toprule
			\boldmath{$\mathfrak{S}_{\{1, 2^2, 3^2\}}$}& 
			\boldmath{($2\des$, $2\maj$)}&
			\boldmath{($\des_2$, $\maj_2$)}&
			\boldmath{($\exc_2$, $\den_2$)}&
			\boldmath{($\exc_2$, $\den$)}& \boldmath{($\exc_2$, $\den_3$)} \\
			\midrule
			12233 & (0,0) & (0,0) & (0,0) & (0,0) & (0,0)\\
			12323 & (0,1) & (1,3) & (1,3) & (1,3) & (1,3)\\
			12332 &	(0,2) & (1,4) &	(1,4) & (1,4) & (1,4)\\
			13223 &	(0,2) & (1,2) &	(1,2) & (1,2) & (1,2)\\
			13232 &	(0,3) & (2,6) &	(1,3) & (1,3) & (1,3)\\
			13322 &	(0,4) & (1,3) &	(2,6) & (2,6) & (2,6)\\
			21233 &	(0,1) & (0,1) &	(0,1) & (0,1) & (0,1)\\
			21323 &	(0,2) & (1,4) &	(1,4) & (1,4) & (1,4)\\
			21332 &	(0,3) & (1,5) &	(1,5) & (1,5) & (1,5)\\
			22133 &	(0,2) & (0,2) &	(0,2) & (0,2) & (0,2)\\
			22313 &	(1,5) & (0,3) &	(1,5) & (1,5) & (1,5)\\
			22331 &	(1,6) & (0,4) &	(1,6) & (1,6) & (1,6)\\
			23123 &	(1,4) & (0,2) &	(1,3) & (1,3) & (1,3)\\
			23132 &	(1,5) & (1,6) &	(1,4) & (1,4) & (1,4)\\
			23213 &	(0,3) & (1,5) &	(1,4) & (1,4) & (1,4)\\
			23231 &	(1,7) & (1,6) &	(1,5) & (1,5) & (1,5)\\
			23312 &	(1,6) & (0,3) &	(2,7) & (2,7) & (2,7)\\
			23321 &	(0,4) & (1,7) &	(2,8) & (2,8) & (2,8)\\
			31223 &	(1,3) & (0,1) &	(0,1) & (0,1) & (0,1)\\
			31232 &	(1,4) & (1,5) &	(0,2) & (0,2) & (0,2)\\
			31322 & (1,5) & (1,4) & (1,5) & (1,5) & (1,5)\\
			32123 & (0,3) & (1,3) & (0,2) & (0,2) & (0,2)\\
			32132 & (0,4) & (2,7) & (0,3) & (0,3) & (0,3)\\
			32213 & (0,4) & (1,4) & (0,3) & (0,3) & (0,3)\\
			32231 & (1,8) & (1,5) & (0,4) & (0,4) & (0,4)\\
			32312 & (1,7) & (1,4) & (1,6) & (1,6) & (1,6)\\
			32321 & (0,5) & (2,8) & (1,7) & (1,7) & (1,7)\\
			33122 & (1,6) & (0,2) & (1,4) & (1,4) & (1,4)\\
			33212 & (0,5) & (1,5) & (1,5) & (1,5) & (1,5)\\
			33221 & (0,6) & (1,6) & (1,6) & (1,6) & (1,6)\\
			\bottomrule
		\end{tabular}
	\end{center}
\end{table}

Relying on Theorem \ref{thm-den-r-h}, we derive 
  the following interesting extension of Theorem \ref{th-Liu3} derived by Liu \cite{Liu3} to the multipermutations over $M=\{1^k, 2^k, \ldots, n^k\}$ with $k\geq 1$. 
	\begin{theorem}\label{th-regular}
		Let $M=\{1^{k}, 2^{k}, \ldots, n^{k}\}$ with $k\geq 1$. 
		For all $g, \ell\geq 1$ and $1\leq h\leq g+\ell$,  the pair  $(g\exc_\ell, g\den_{h})$ is $r$-Euler-Mahonian over $\mathfrak{S}_M$, that is 
		\begin{equation}\label{eq-regular}
		\sum_{w\in \mathfrak{S}_{M}}t^{r\des(w)}q^{r\maj(w)}=	\sum_{w\in \mathfrak{S}_{M}}t^{g\exc_\ell(w)}q^{g\den_{h}(w)},
			\end{equation} 
		where $r=g+\ell-1$.  
	\end{theorem}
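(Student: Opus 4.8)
The plan is to deduce Theorem \ref{th-regular} from Theorem \ref{thm-den-r-h} by showing that, \emph{on the special multisets} $M=\{1^k,2^k,\ldots,n^k\}$, the pair $(g\des_\ell,g\maj_\ell)$ is itself $r$-Euler-Mahonian, i.e.
\begin{equation}\label{eq-plan-reduce}
\sum_{w\in\mathfrak{S}_M} t^{g\des_\ell(w)}q^{g\maj_\ell(w)}=\sum_{w\in\mathfrak{S}_M} t^{r\des(w)}q^{r\maj(w)},\qquad r=g+\ell-1.
\end{equation}
Combining \eqref{eq-plan-reduce} with \eqref{eq-den-r-h} of Theorem \ref{thm-den-r-h} immediately gives \eqref{eq-regular}. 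So the entire content of the theorem is reduced to the ``descent side'' identity \eqref{eq-plan-reduce}, which is the multipermutation analogue of Liu's permutation identity \eqref{eq-r-Euler}. The point of restricting to $M=\{1^k,\ldots,n^k\}$ is that this is precisely the class of multisets for which the $r$-descent / $r$-major machinery of Rawlings behaves like the permutation case; for a general multiset one does not expect $(g\des_\ell,g\maj_\ell)$ and $(r\des,r\maj)$ to coincide (indeed Table \ref{table1} shows $(2\des,2\maj)$ and $(\des_2,\maj_2)$ are \emph{not} equal as pairs even on $\mathfrak{S}_{\{1,2^2,3^2\}}$, although they are equidistributed with $(\exc_2,\den_h)$).

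The first step is to set up a combinatorial model for $\mathfrak{S}_M$ with $M=\{1^k,\ldots,n^k\}$ that makes the parameters $g$ and $\ell$ interchangeable in the sense needed. Following Liu's argument for permutations, I would encode a multipermutation $w=\alpha_1\cdots\alpha_m$ by recording, for each position, which of the $r=g+\ell-1$ ``residue classes'' of the previous-letter/current-letter gap it falls into; the key observation is that the generating-function contributions of a $g$-gap $\ell$-level descent at position $i$ (contributing $t q^i$ together with the associated $g\Inv_\ell$ weight) and of an ordinary $r$-descent at position $i$ can be matched class by class, because on $M=\{1^k,\ldots,n^k\}$ every value $1,\ldots,n$ appears with the \emph{same} multiplicity $k$, so the count of inversion pairs $|g\Inv_\ell(w)|$ and the count $|1\Inv_r(w)|$ (the inversions not of ``distance $\geq r$'') differ only by a quantity that depends on $w$ through its descent composition in a controlled, reversible way. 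Concretely I expect to build a bijection $\Phi_{g,\ell}\colon\mathfrak{S}_M\to\mathfrak{S}_M$ with $g\des_\ell(w)=r\des(\Phi_{g,\ell}(w))$ and $g\maj_\ell(w)=r\maj(\Phi_{g,\ell}(w))$, constructed by the same local-rearrangement recipe Liu uses in \cite{Liujcta}, verified to respect equal letter-multiplicities.

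The second step is to carry out the induction that underlies the paper's two main bijections. The excerpt states that there are bijections between $\mathfrak{S}_{M'}\times\{0,1,\ldots,m'\}$ and $\mathfrak{S}_M$ with $M'=M\setminus\{n^{k_n}\}$; specialised to $M=\{1^k,\ldots,n^k\}$ this reads $\mathfrak{S}_M\cong\mathfrak{S}_{M''}\times\{0,1,\ldots,(n-1)k\}$ where $M''=\{1^k,\ldots,(n-1)^k\}$, and these bijections are engineered so that inserting the $k$ copies of the new largest letter $n$ into a word of $\mathfrak{S}_{M''}$ multiplies the $(t^{g\des_\ell},q^{g\maj_\ell})$-generating function by the same factor as it multiplies the $(t^{r\des},q^{r\maj})$-generating function. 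So \eqref{eq-plan-reduce} would follow by induction on $n$ once the insertion step is shown to contribute the matching factor; the base case $n=1$ (a single repeated letter) is trivial since both sides equal $1$. The cleanest route is probably to first establish \eqref{eq-plan-reduce} directly via $\Phi_{g,\ell}$ and then simply invoke Theorem \ref{thm-den-r-h}; alternatively one runs a single induction proving both \eqref{eq-den-r-h} and \eqref{eq-regular} simultaneously.

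The main obstacle, I expect, is the insertion/recursion bookkeeping for $g\maj_\ell$ on a multiset: when one inserts the $k$ equal copies of $n$ into the $m''+1$ available gaps of a word $v\in\mathfrak{S}_{M''}$, an ordinary major-index insertion lemma (the $q$-multinomial ``cycle lemma'' type argument) must be upgraded to track simultaneously the change in $|g\Inv_\ell|$, the change in the set $g\Des_\ell$, and the shift of existing descent positions — and, crucially, to show this joint change is \emph{independent} of which of $g,\ell$ one uses as long as $g+\ell-1=r$ is fixed. The equal-multiplicity hypothesis $k_1=\cdots=k_n=k$ is exactly what makes the $B^g_i$-type corrections and the weak-versus-strict inversion discrepancies collapse to functions of the descent composition alone; pinning down that collapse, and verifying it is preserved by the insertion bijection, is the technical heart. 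I would model the argument closely on Liu's treatment of \eqref{eq-r-Euler} and on the proof of Theorem \ref{thm-den-r-h}, adapting each lemma to carry the extra ``all multiplicities equal'' invariant.
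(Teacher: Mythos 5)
Your reduction is the same as the paper's: both you and the authors deduce the theorem from Theorem \ref{thm-den-r-h} by showing that $(g\des_\ell,g\maj_\ell)$ and $(r\des,r\maj)$ are equidistributed over $\mathfrak{S}_M$ when $M=\{1^k,\ldots,n^k\}$, and your ``alternative'' route --- induction on $n$ via the insertion bijections --- is exactly how the paper proves that equidistribution. But there are two problems with how you propose to carry this out. First, your preferred concrete route, a direct bijection $\Phi_{g,\ell}\colon\mathfrak{S}_M\to\mathfrak{S}_M$ built by ``the same local-rearrangement recipe Liu uses,'' is precisely what the authors' remark rules out: they state explicitly that Liu's bijection is not applicable to multipermutations, and you give no indication of how repeated letters would be accommodated, so this branch of your plan cannot be taken on faith.

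Second, and more importantly, in the induction route you anticipate having to prove a new, upgraded insertion lemma that simultaneously tracks the changes in $|g\Inv_\ell|$, in $g\Des_\ell$, and in the positions of existing descents, and to verify that this joint change depends on $g,\ell$ only through $r=g+\ell-1$; you describe pinning this down as ``the technical heart'' still to be done. In the paper no new insertion lemma is needed: Theorem \ref{th-Phi-maj}, already established for arbitrary multisets, is applied twice, once with parameters $(g,\ell)$ and once with parameters $(r,1)$. The single new ingredient is the arithmetic observation that for $M=\{1^k,\ldots,n^k\}$ one has $\delta_\ell+\gamma_g=(\ell-1)k+(g-1)k=(g+\ell-2)k=(r-1)k=\delta_1+\gamma_r$, so the refined insertion condition \eqref{eq-Phi2}, namely $\lambda_{t-s}\geq t+\delta_\ell+\gamma_g\geq\lambda_{t-s+1}$, is literally the same inequality for both pairs of statistics; hence the two refined recursions for $\sum_{g\des_\ell(w)=t}q^{g\maj_\ell(w)}$ and $\sum_{r\des(w)=t}q^{r\maj(w)}$ coincide term by term and the induction closes (with the trivial base case $n<g+\ell$, where both statistics reduce to $(0,\inv)$). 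You gesture at the role of equal multiplicities, but you never isolate this identity, which is the entire content of the restriction to $M=\{1^k,\ldots,n^k\}$; without it the decisive step of your plan remains unproved.
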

\begin{remark}
 It should be noted  that Liu's bijection \cite{Liu3}  is not applicable to  multipermutations. 
  Moreover, (\ref{eq-regular})  does not hold for  arbitrary multisets.   For example, by Table \ref{table1}, one can easily check that 
   $$
  \begin{array}{lll}
  	\sum\limits_{w\in \mathfrak{S}_{\{{1,2^2,3^2}\}}}  t^{2\des(w)}q^{2\maj(w)}
  	&=(1+2q+4q^2+4q^3+4q^4+2q^5+q^6)\\
  	&+t(q^3+2q^4+3q^5+3q^6+2q^7+q^8)\\
  	&&\\
  	&\neq \sum\limits_{w\in \mathfrak{S}_{\{{1,2^2,3^2}\}}}  t^{\exc_2(w)}q^{\den_2(w)} 
  \end{array}
  $$
\end{remark}

	\section{Proofs} 
	We begin with some definitions and notations.  	For a nonnegative integer $n$,  the {\em q-integer} and the {\em q-factorial } is defined as
	$$
	[n]_q=1+q+q^2+\cdots +q^{n-1}\quad\text{and}\quad [n]_q!=[1]_q[2]_q\ldots [n]_q.
	$$
 Given  nonnegative  integers $t_1, t_2, \ldots, t_n$ with $t_1+t_2+\cdots+t_n=s$, 
the {\em$q$-multinomial coefficient} is defined as 
 $${ s\brack t_1, t_2, \ldots, t_n}={[s]_q!\over [t_1]_q![t_2]_q! \ldots [t_n]_q!}.$$  
 We abbreviate  ${n\brack k}$ for $n\brack k,n-k$, which is known as the {\em$q$-binomial coefficients} (or {\em Gaussian polynomials}).  
 
	A {\em partition} $\lambda$ of a positive integer $n$ is a finite non-increasing sequence of nonnegative integers  $(\lambda_1, \lambda_2, \ldots, \lambda_s)$ such that $\sum_{i=1}^s \lambda_i=n$.  The entries $\lambda_i$   are called the parts of $\lambda$.
	The weight of $\lambda$,  denote by $|\lambda|$,  is  
	the sum of its parts.   Let $\mathcal{P}(s,t)$ denote the set of partitions  with $s$ parts and whose largest part is at most $t$. 
	 The Gaussian polynomial ${s+t\brack s}$  has the following 
	 partition interpretation  (see \cite[Theorem 3.1]{Andrews}):
	 \begin{equation}\label{eqpar1}
	 	{s+t\brack s}=\sum_{\lambda\in \mathcal{P}(s,t) } q^{|\lambda|}.
	 \end{equation}

	Fix   $g, \ell\geq 1$. Let  $M'=M-\{n^{k_n}\}$ and $m'=m-k_n$.  For $n\geq \ell$, let 
	$$
	\delta_\ell:=k_1+k_2+\cdots+k_{\ell-1}
	$$
	the sum of consecutive $\ell-1$ terms of $k_i$'s starting from $k_1$ with the convention that $\delta_1=0$.   For $n\geq g$,  let
	$$
	\gamma_g:=k_{n-g+1}+k_{n-g+2}+\cdots+k_{n-1}
	$$
	the sum of consecutive $g-1$ terms of $k_i$'s starting from $k_{n-g+1}$ with the convention that $\gamma_1=0$.

To establish  the equidistribution   stated in Theorem \ref{thm-den-r-h},  we shall construct two bijections   between $\mathfrak{S}_{M'}\times \mathcal{P}(k_n, m')$ and  $ \mathfrak{S}_{M}$  and show   that these  bijections possess the following celebrated properties in the subsequent subsections. 

 \begin{theorem}\label{th-Phi-den}
 	For  all $g,    h\geq 1$ and  $n\geq \max\{g+1, h\}$,   there exists  a bijection $\Phi^{\den}_{g, h}$ between $\mathfrak{S}_{M'}\times \mathcal{P}(k_n, m')$ and  $ \mathfrak{S}_{M}$  such  that for any $(w, \lambda)\in   \mathfrak{S}_{M'}\times  \mathcal{P}(k_n, m')$,   we have 
 	\begin{equation}\label{eq-Phiden1}
 		g\den_h(\Phi^{\den}_{g, h}(w, \lambda))=g\den_{h}(w)+|\lambda|.
 	\end{equation}
 Furthermore,    if   $g\exc_{\ell}(w)=s$ and $g\exc_{\ell}(\Phi^{\mathrm{den}}_{g,h}(w, \lambda))=t$,   then   we have
 	\begin{equation}\label{eq-Phiden2}
 		\lambda_{t-s}\geq t+\delta_\ell+\gamma_g \geq \lambda_{t-s+1}
 	\end{equation} 
for all $\ell\geq 1$, $1\leq h\leq g+\ell $ and $n\geq g+\ell$.
 \end{theorem}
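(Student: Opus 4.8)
The plan is to build $\Phi^{\den}_{g,h}$ by induction on $k_n$ via a one-letter insertion step, mirroring the classical construction that underlies the Denert–Rawlings type results. Concretely, the partition $\lambda \in \mathcal{P}(k_n, m')$ encodes, part by part, \emph{where} to insert the $k_n$ copies of the largest letter $n$ into a multipermutation of $M'$: writing $\lambda = (\lambda_1 \geq \lambda_2 \geq \cdots \geq \lambda_{k_n})$, I would insert the copies of $n$ one at a time, the $j$-th copy being placed so that it contributes exactly $\lambda_j$ to the $g$-gap $h$-level Denert statistic. The non-increasing condition on $\lambda$ is exactly what is needed for the successive insertion positions to be well-defined and for the map to be reversible: peeling off the copies of $n$ from right to left (in the weakly increasing rearrangement $\bar w$, the copies of $n$ occupy the last $k_n$ positions, so they are always $g$-gap $h$-level excedance letters once $n \geq \max\{g+1,h\}$), one reads back the parts of $\lambda$ in non-increasing order and recovers $w \in \mathfrak{S}_{M'}$. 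This yields \eqref{eq-Phiden1} essentially by construction, since each inserted copy of $n$ adds its designated amount $\lambda_j$ and, crucially, inserting $n$ (the maximum) does not disturb the excedance status or the relevant inversion/weak-inversion counts among the letters of $M'$ — any letter $<n$ sitting to the left of an inserted $n$ sees its position in $\bar w$ shift, but a careful bookkeeping shows the net change to $g\den_h$ of the old letters is zero. The key technical lemma is therefore: \emph{inserting a copy of $n$ into a multipermutation of a multiset not containing $n$, at the position indexed by a value $v$ with $0 \leq v \leq (\text{current length})$, increases $g\den_h$ by exactly $v$ and does not change $g\den_h$ of the ambient letters}; this I would prove by the standard "new excedance place contributes $i + B^g_i$, and the shift of later excedance places is compensated by the change in their $B^g$-terms and in $\imv$" cancellation argument, using $n \geq \max\{g+1,h\}$ to guarantee every copy of $n$ is an excedance letter and $B^g_i(w) = g-1$ for it (as in the Remark for permutations, adapted to multiplicities).

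For the second assertion, \eqref{eq-Phiden2}, I would track the $g$-gap $\ell$-level excedance number through the same insertion. Here the point is that whether an inserted copy of $n$ is a $g$-gap $\ell$-\emph{level} excedance (as opposed to $h$-level) depends on the value $x_i$ sitting below it in $\bar w$, i.e. on how far left it is inserted; and the threshold separating "creates a new $\ell$-level excedance" from "does not" is precisely a cutoff on the contributed value. Since $g\exc_\ell(w) = s$ and $g\exc_\ell(\Phi(w,\lambda)) = t$, exactly $t - s$ of the $k_n$ inserted copies of $n$ become $g$-gap $\ell$-level excedance letters; because the inserted contributions $\lambda_1 \geq \cdots \geq \lambda_{k_n}$ are non-increasing and "being an $\ell$-level excedance" corresponds to the contributed value being at least some explicit bound — which I expect to work out to $t + \delta_\ell + \gamma_g$ after accounting for the $\delta_\ell$ letters forced below $\ell$ and the $\gamma_g$ letters within the top gap — the first $t-s$ parts must all meet this bound while the remaining parts fall below it. That gives $\lambda_{t-s} \geq t + \delta_\ell + \gamma_g \geq \lambda_{t-s+1}$, with the role of $\ell \leq h \leq g+\ell$ and $n \geq g+\ell$ being to ensure the bound sits in the valid range $[0, m']$ so that the dichotomy is genuine (and the endpoints $t - s = 0$ or $t-s = k_n$ are handled by the conventions $\lambda_0 = +\infty$, $\lambda_{k_n+1} = 0$).

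I anticipate the main obstacle is the second part rather than the first: pinning down the \emph{exact} constant $t + \delta_\ell + \gamma_g$ requires carefully reconciling three competing index shifts — the position $i$ of the newly inserted $n$ in the final word, the count $B^g_i$ of small letters strictly between $\alpha_i - g$ and $\alpha_i$, and the dependence of the $\ell$-level excedance test on $x_i$ versus the $h$-level test used to define the contribution $\lambda_j$. The discrepancy between $h$ and $\ell$ is the crux: the same inserted copy contributes the $h$-level amount to $g\den_h$ but is classified for $g\exc_\ell$, so the bound in \eqref{eq-Phiden2} is really measuring the gap between these two thresholds, and the hypothesis $h \leq g+\ell$ is what keeps this gap non-negative. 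Once the single-insertion version of \eqref{eq-Phiden2} is established with the right constant, the multi-copy statement follows by monotonicity of $\lambda$ together with the fact that inserting one more copy of $n$ shifts $t$ by at most one and the bound by a controlled amount; I would set this up so that the induction hypothesis carries the inequality forward cleanly. A secondary, more routine obstacle is verifying bijectivity in the multiset setting — equal letters among $M'$ mean one must be careful that the insertion position is determined by the \emph{value} contributed and not by a choice among equal letters, but since $n$ is strictly larger than everything in $M'$ this is only a matter of fixing the convention "insert immediately before the leftmost letter that would be displaced," which makes the inverse unambiguous.
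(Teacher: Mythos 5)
Your high-level architecture (iterate a one-letter step $k_n$ times, with a dichotomy lemma saying that a step creates a new $g$-gap $\ell$-level excedance exactly when its contribution exceeds $g\exc_\ell+\delta_\ell+\gamma_g$, whence the cutoff $t+\delta_\ell+\gamma_g$ in \eqref{eq-Phiden2}) matches the paper's strategy, and your second paragraph is a fair sketch of Lemmas \ref{lem-phi-den3} and \ref{lem-Phi-den2}. But the core of the theorem is the one-step map itself, and there your proposed mechanism does not work. You insert a copy of $n$ and assert that ``a careful bookkeeping shows the net change to $g\den_h$ of the old letters is zero.'' For the Denert-type statistic this is false: $g\den_h$ is computed by comparing $\alpha_i$ against $x_i$ in the weakly increasing rearrangement $\bar w$, and inserting a letter at position $p$ shifts every letter with index $\geq p$ one slot to the right, so a former excedance letter $\alpha_j$ (with $\alpha_j\geq x_j+g$) now sits over $x_{j+1}\geq x_j$ and may cease to be a $g$-gap $h$-level excedance; the sets $g\Exc_h$, $g\Nexc_h$ and the terms $i+B^g_i$ all move in an uncontrolled way. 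This is exactly why the paper does \emph{not} insert: its map $\phi_{n,g,h}$ \emph{replaces} the letter at the labeled position by $n$ and then cascades the displaced letters down the subsequence of excedance letters and then the subsequence of non-excedance letters (ending by appending at position $m'+a$), so that every old letter either keeps its position or moves only within its own class, and $\bar w$ changes only by an appended $n$. Proving that this cascade changes $g\den_h$ by exactly the label $c$ occupies the bulk of the paper's argument (Lemma \ref{lem-phi-den1}, including Claims 1--3 identifying where the displaced excedance letter $\alpha_{i_k}$ lands among the non-excedances). None of this is present in, or recoverable from, your insertion model. A smaller but related error: the increment $B^g$ of the new $n$ is $\gamma_g=k_{n-g+1}+\cdots+k_{n-1}$, not $g-1$; these agree only for permutations.

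Two further gaps. First, you stipulate that the $j$-th inserted copy contributes $\lambda_j$, i.e.\ the parts are consumed in their given non-increasing order, and that the inverse reads them back in that order by peeling the $n$'s ``from right to left.'' The paper instead consumes, at each step, whichever remaining part of $\lambda$ carries the label occurring farthest to the \emph{right} under the current $g\den_h$-labeling (the ``greatest $y_b$'' rule); the consumed values need not be monotone (in the worked example they come out as $9,4,9,5$), and invertibility rests on the nontrivial fact $y_{b+1}\leq y_b$ (Lemma \ref{lem-y_b}) together with the explicit inverse $\psi_{n,g,h}$ that locates the \emph{leftmost} $n$. Your monotone-order scheme would need its own proof that the step contributing a prescribed value is unique and that leftmost-$n$ removal recovers the correct part, and for equal parts and for labels attached to excedance versus non-excedance positions this is precisely where a naive ordering breaks. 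Second, the existence of a position contributing each value $v\in\{0,1,\ldots,m'\}$ is not automatic for insertion: the paper manufactures this bijection between values and moves via the $g\den_h$-labeling (rightmost $\gamma_g+1$ spaces get $0,\ldots,\gamma_g$; spaces before excedance letters get $\gamma_g+1,\ldots,\gamma_g+s$ right to left; the rest left to right), and this labeling is an essential ingredient you would have to reinvent. So while your outline of the $g\exc_\ell$ threshold analysis is on target, the proof as proposed has a genuine gap at its foundation.
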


  \begin{theorem}\label{th-Phi-maj}
 	For all $g, \ell\geq 1$ and  $n\geq g+\ell$,   there exists  a bijection $\Phi^{\maj}_{g, \ell}$ between $\mathfrak{S}_{M'}\times \mathcal{P}(k_n, m')$ and  $ \mathfrak{S}_{M}$  such that for any $(w, \lambda)\in   \mathfrak{S}_{M'}\times  \mathcal{P}(k_n, m')$,   we have 
 	\begin{equation}\label{eq-Phi1}
 		g\maj_\ell(\Phi^{\maj}_{g, \ell}(w, \lambda))=g\maj_{\ell}(w)+|\lambda|.
 	\end{equation}
 	Furthermore,  if $g\des_{\ell}(w)=s$ and $g\des_{\ell}(\Phi^{\mathrm{maj}}_{g, \ell}(w, \lambda))=t$, then  we have
 	\begin{equation}\label{eq-Phi2}
 		\lambda_{t-s}\geq t+\delta_\ell+\gamma_g \geq \lambda_{t-s+1}.
 	\end{equation} 
 \end{theorem}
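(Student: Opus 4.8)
The plan is to realize $\Phi^{\maj}_{g,\ell}$ as the operation of inserting the $k_n$ copies of the largest letter $n$ into $w\in\mathfrak{S}_{M'}$, with the partition $\lambda\in\mathcal{P}(k_n,m')$ recording the insertion data. The starting point is that, since $n\geq g+\ell$, the letters $1,2,\ldots,n-1$ split into three classes governing how a copy of $n$ adjacent to such a letter meets the $g$-gap $\ell$-level filters: call $a$ \emph{low} if $a<\ell$, \emph{high} if $a\geq n-g+1$, and \emph{middle} if $\ell\leq a\leq n-g$. A copy of $n$ forms an inversion with every non-$n$ letter to its right, and exactly those inversions whose smaller letter is low or high lie in $g\Inv_\ell$ (a low letter is below level $\ell$; a high letter lies within gap $g-1<g$ of $n$), while inversions with middle letters never do; dually, a copy of $n$ placed immediately to the left of a letter $a$ creates a $g$-gap $\ell$-level descent there exactly when $a$ is middle (possibly after destroying an old descent with the same bottom). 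There are exactly $\delta_\ell$ low and $\gamma_g$ high letters in $w$, hence also in $w^*$ since inserting copies of $n$ changes neither count, which is the source of the shift by $\delta_\ell+\gamma_g$ in \eqref{eq-Phi2}.

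The technical core is a single-insertion lemma in the spirit of the classical insertion argument for the major index and its refinements for the $r$-major index and the $g$-gap $\ell$-level major index~\cite{Liujcta}: if $v$ is a multipermutation whose non-$n$ letters form a subword of length $m'$ and whose current $g$-gap $\ell$-level descent count is $d$, then for every $p\in\{0,1,\ldots,m'\}$ there is exactly one place to insert a new copy of $n$ so that $g\maj_\ell$ increases by precisely $p$, and for that insertion $g\des_\ell$ is unchanged when $p\leq d+\delta_\ell+\gamma_g$ and increases by one when $p\geq d+\delta_\ell+\gamma_g+1$. I would prove this by computing, for each candidate insertion position, the change in $g\maj_\ell$ as the sum of the forward shift of the positions of the surviving $g$-gap $\ell$-level descents, the net position contribution of the at most one descent created and the at most one destroyed, and the number of new $g\Inv_\ell$-pairs (inversions of the inserted $n$ with low or high letters to its right); one then lists the insertion positions in the unique order making this total run through $0,1,\ldots,m'$ without repetition --- interleaving the slots just left of the middle letters with the slots just left of the low and high letters and the slot at the far right --- and reads off the stated threshold, whose shift by $\delta_\ell+\gamma_g$ records exactly that the inserted $n$ must be pushed past all $d+\delta_\ell+\gamma_g$ ``inert'' slots before it can produce a new descent.

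Granting this lemma, $\Phi^{\maj}_{g,\ell}$ is defined by inserting the $k_n$ copies of $n$ one after another, each to the left of all copies inserted so far; this normalization removes the standard over-counting due to adjacent equal letters and forces the sequence of increments to be weakly monotone, hence to be a partition $\lambda\in\mathcal{P}(k_n,m')$ with those increments as its parts. Since the $j$-th step adds the $j$-th part to $g\maj_\ell$, summing gives \eqref{eq-Phi1}, and the map is a bijection because each step is reversible (delete the extreme copy of $n$ and record the induced increment). Finally \eqref{eq-Phi2} follows by unwinding the per-step description of $g\des_\ell$: with $s=g\des_\ell(w)$ and $t=g\des_\ell(\Phi^{\maj}_{g,\ell}(w,\lambda))$, exactly $t-s$ of the $k_n$ steps cross their (weakly increasing) thresholds, and comparing the part that crosses with the first part that does not gives $\lambda_{t-s}\geq t+\delta_\ell+\gamma_g\geq\lambda_{t-s+1}$. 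The argument runs parallel to the proof of Theorem~\ref{th-Phi-den}, sharing the partition bookkeeping; the single step that needs genuinely new work, and that I expect to be the main obstacle, is the single-insertion lemma, since the combined $g$-gap and $\ell$-level filtering makes the $g\maj_\ell$-increment non-monotone in the naive left-to-right insertion order and lets one inserted copy of $n$ destroy a $g$-gap $\ell$-level descent while creating another, so one must pin down the right order on insertion slots and verify both that the increments exhaust $\{0,1,\ldots,m'\}$ and that the descent count jumps exactly at $d+\delta_\ell+\gamma_g+1$.
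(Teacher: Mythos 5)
Your overall architecture coincides with the paper's: insert the $k_n$ copies of $n$ one at a time, label the $m'+1$ available slots by the resulting increment of $g\maj_\ell$, observe that $|S|=g\des_\ell+\delta_\ell+\gamma_g$ slots are ``inert'' (those before low letters $<\ell$, before high letters $\geq n-g+1$, and before existing descent bottoms) while the rest create a new descent, and then do partition bookkeeping. Your classification of low/middle/high letters and the identification of the threshold $d+\delta_\ell+\gamma_g$ are exactly the paper's $g\maj_\ell$-labeling. However, there is a genuine gap in the step that turns the iterated insertion into a bijection with $\mathcal{P}(k_n,m')$, and it occurs at precisely the point you flag as the main obstacle. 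Two of your claims are false as stated. First, the single-insertion lemma (``for every $p\in\{0,1,\ldots,m'\}$ there is exactly one place to insert so that $g\maj_\ell$ increases by exactly $p$'') fails once copies of $n$ are already present: for a slot carrying a label $p>|S|$ the actual increment is $p+z$, where $z$ is the number of $n$'s to the left of that slot, because each such $n$ contributes an extra shifted descent position. The paper's Lemma \ref{lem-Phi-maj1} does not claim your statement; it only proves the increment equals the label for the particular slot selected by the algorithm, by showing $z=0$ there (via $y_b<y_{b-1}$ when the label exceeds $|S(w^{(b)})|$). Second, the claim that inserting each new $n$ to the left of all previous ones ``forces the sequence of increments to be weakly monotone'' is contradicted by the paper's own worked example: for $w=4151652413$, $\lambda=(9,9,6,3)$, $g=2$, $\ell=3$, the insertion positions are weakly decreasing ($8,7,6,4$ among unstarred slots) but the increments come out as $9,3,9,6$.

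Because of these two points, your map is not well-defined as a bijection from $\mathfrak{S}_{M'}\times\mathcal{P}(k_n,m')$: given a partition you have not specified which slot realizes which part (processing parts in decreasing order is not what makes the construction work, and you would still have to prove that the chosen slot lies to the left of all existing $n$'s so that $z=0$), and given the image word you cannot read the partition off as the monotone sequence of increments. The device the paper uses to close this gap is the selection rule ``insert at the \emph{rightmost} unstarred slot whose label is still a remaining part of $\lambda$,'' together with Lemma \ref{lem-label-maj} (the multiset of labels weakly to the left of the insertion point is preserved from one step to the next) and Lemma \ref{lem-y_b-maj} ($y_{b+1}\leq y_b$). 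These jointly guarantee that every remaining part of $\lambda$ is realizable at a later step, that the correction $z$ vanishes at each chosen slot, and that the inverse map (delete the leftmost $n$, record the drop in $g\maj_\ell$) recovers $(w^{(b)},a^{(b)}_{y_b})$. Some such mechanism is indispensable; without it the ``partition bookkeeping'' you invoke does not go through.
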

 MacMahon's equidistribution theorem \cite{Mac}  tells us that
 \begin{equation}\label{eq-invmaj}
 	\sum\limits_{w\in \mathfrak{S}_M}  q^{\inv(w)}=\sum\limits_{w\in \mathfrak{S}_M}q^{\maj(w)}={m\brack k_1,k_2,\ldots,k_n}. 
 \end{equation}
  In the following, we aim to show that $g\den_h$ and $\inv$ are equally distributed over $\mathfrak{S}_{M}$, and hence $g\den_{\ell}$ is Mahonian. 
  \begin{theorem}
  	For all $g,h\geq 1$, we have
  	\begin{equation}\label{eq-mah}
  		\sum_{w\in \mathfrak{S}_{M}}q^{g\den_h(w)}=\sum_{w\in \mathfrak{S}_{M}}q^{\inv(w)}={m\brack k_1,k_2,\ldots,k_n}.
  	\end{equation}
  \end{theorem}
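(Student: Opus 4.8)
The plan is to prove \eqref{eq-mah} by peeling off the largest letter of $M$ and iterating the bijection $\Phi^{\den}_{g,h}$ of Theorem \ref{th-Phi-den}, thereby reducing the generating function of $g\den_h$ over $\mathfrak{S}_M$ to the product of the analogous generating function over $\mathfrak{S}_{M'}$ and a Gaussian polynomial counting partitions in $\mathcal{P}(k_n,m')$. Since the second equality in \eqref{eq-mah} is exactly MacMahon's theorem \eqref{eq-invmaj}, it suffices to prove $\sum_{w\in\mathfrak{S}_M}q^{g\den_h(w)}={m\brack k_1,k_2,\ldots,k_n}$, and I would establish this by induction on $n$, the number of distinct letters of $M$.

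First I would dispose of the range $n<\max\{g+1,h\}$, where Theorem \ref{th-Phi-den} is not available and which will serve as the base of the induction. In this range the condition ``$\alpha_i\geq x_i+g$ and $\alpha_i\geq h$'' defining a $g$-gap $h$-level excedance place can never be satisfied by any $w\in\mathfrak{S}_M$: if $n\leq g$ then $\alpha_i-x_i\leq n-1<g$, while if $n<h$ then $\alpha_i\leq n<h$; and $n<\max\{g+1,h\}$ forces one of these two cases. Hence $g\Excp_h(w)=\emptyset$, so $g\Exc_h(w)$ is the empty word and $g\Nexc_h(w)=w$, which gives $g\den_h(w)=\inv(w)$ for every $w\in\mathfrak{S}_M$. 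Consequently $\sum_{w\in\mathfrak{S}_M}q^{g\den_h(w)}=\sum_{w\in\mathfrak{S}_M}q^{\inv(w)}={m\brack k_1,\ldots,k_n}$ by \eqref{eq-invmaj}, settling all these cases at once (including $n=1$).

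For the inductive step, assume $n\geq\max\{g+1,h\}$ and that the identity holds for the multiset $M'=M\setminus\{n^{k_n}\}$ on $n-1$ letters (which has total size $m'=m-k_n$). Applying the bijection $\Phi^{\den}_{g,h}\colon\mathfrak{S}_{M'}\times\mathcal{P}(k_n,m')\to\mathfrak{S}_M$ of Theorem \ref{th-Phi-den} together with the weight identity \eqref{eq-Phiden1}, I would write
\[
\sum_{w\in\mathfrak{S}_M}q^{g\den_h(w)}
=\sum_{(u,\lambda)\in\mathfrak{S}_{M'}\times\mathcal{P}(k_n,m')}q^{g\den_h(u)+|\lambda|}
=\Bigl(\sum_{u\in\mathfrak{S}_{M'}}q^{g\den_h(u)}\Bigr)\Bigl(\sum_{\lambda\in\mathcal{P}(k_n,m')}q^{|\lambda|}\Bigr).
\]
By the induction hypothesis (or, if $n-1<\max\{g+1,h\}$, by the base case above) the first factor equals ${m'\brack k_1,\ldots,k_{n-1}}$, and by the partition interpretation \eqref{eqpar1} of the Gaussian polynomial the second factor equals ${m'+k_n\brack k_n}={m\brack k_n,m'}$. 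Multiplying these two expressions and cancelling the common factor $[m']_q!$ from the numerator and denominator gives ${m\brack k_1,\ldots,k_n}$, completing the induction; combining with \eqref{eq-invmaj} then yields \eqref{eq-mah}, and in particular shows $g\den_h$ is Mahonian over $\mathfrak{S}_M$.

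The point worth stressing is that essentially all of the combinatorial difficulty is already absorbed into Theorem \ref{th-Phi-den}; the present argument is a short bookkeeping computation. The only place that needs genuine care is the boundary range $n<\max\{g+1,h\}$, where $\Phi^{\den}_{g,h}$ does not exist and one must instead observe directly that $g\den_h$ degenerates to $\inv$. I would therefore organize the induction so that this observation simultaneously provides its base case and the termination of the iterated peeling.
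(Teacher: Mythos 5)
Your proof is correct and follows essentially the same route as the paper: induction on $n$, with the base case $n<\max\{g+1,h\}$ handled by observing that $g\den_h$ degenerates to $\inv$, and the inductive step carried out by the bijection $\Phi^{\den}_{g,h}$ of Theorem \ref{th-Phi-den} together with the partition interpretation \eqref{eqpar1} of the Gaussian polynomial. The only cosmetic difference is that you evaluate the factor over $\mathfrak{S}_{M'}$ as a $q$-multinomial directly, whereas the paper converts it to $\sum q^{\inv}$ via the induction hypothesis and then invokes \eqref{eq-invmaj}; these are equivalent.
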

\begin{proof}
	We proceed by induction on $n$. Observe that when $n<\max\{h, g+1\}$, 
 the statistic $g\den_{h}$ coincides with  the statistic $\inv$. This immediately implies that     (\ref{eq-mah}) is valid  for $ n<\max\{h, g+1\}$.       Assume that (\ref{eq-mah}) holds for $n-1$ with $n\geq \max\{h, g+1\} $, that is, 
	$$ \sum\limits_{w\in \mathfrak{S}_{M'}}q^{g\den_h(w)}=\sum\limits_{w\in \mathfrak{S}_{M'}}q^{\inv(w)}.$$
	Then, we deduce that
	$$
	\begin{array}{llll}
		\sum\limits_{w\in \mathfrak{S}_{M}}q^{g\den_h(w)}&=& \sum\limits_{w'\in \mathfrak{S}_{M'}}\sum\limits_{\lambda\in \mathcal{P}(k_n, m')}q^{g\den_{h}(w')+|\lambda|}&\,\,\, \,\,\mbox{(by Theorem \ref{th-Phi-den})}\\
		 [1.5em]
		&=&\big( \sum\limits_{w'\in \mathfrak{S}_{M'}}q^{g\den_{h}(w')}\big) \big(\sum\limits_{\lambda\in \mathcal{P}(k_n, m')}q^{ |\lambda|}\big)&\\
	 [1.5em]
		&=& {m\brack k_n}\sum\limits_{w'\in \mathfrak{S}_{M'}}q^{g\den_{h}(w')} & \,\,\, \,\,\mbox{(by  (\ref{eqpar1}))}\\
	  [1.5em]
		&=& {m\brack k_n} \sum\limits_{w'\in \mathfrak{S}_{M'}}q^{\inv(w')} & \,\,\, \,\,\mbox{(by  induction hypothesis)}\\
		  [1.5em]
			&=& {m\brack k_1,k_2,\ldots,k_n} & \,\,\, \,\,\mbox{(by  (\ref{eq-invmaj}))}\\
			 [1.5em]
				&=& \sum\limits_{w\in \mathfrak{S}_{M}}q^{\inv(w)}  & \,\,\, \,\,\mbox{(by  (\ref{eq-invmaj}))}\\
		\end{array}
	$$
	as desired, completing the proof. 
	\end{proof}

 \noindent{\bf Proof of Theorem \ref{thm-den-r-h}.}
 We proceed to prove the equidistribution of $(g\des_{\ell}, g\maj_{\ell})$ and $(g\exc_{\ell}, g\den_{h})$ by induction on $n$.   
 For  $n<g+\ell$,  it is obvious that    $g\des_\ell(w)=g\exc_\ell(w)=0$  and $g\maj_{\ell}(w)=\inv(w)$.   By (\ref{eq-mah}),  the statistics $g\den_{h}(w)$  and $\inv(w)$ are equally distributed over $\mathfrak{S}_{M}$.  This implies that  $g\den_{h}(w)$   and   $g\maj_{\ell}(w)$ have the same distribution over $\mathfrak{S}_{M}$ when $n<g+\ell$. Hence,  the pairs $(g\des_{\ell}, g\maj_{\ell})$ and $(g\exc_{\ell}, g\den_{h})$ are equally distributed over $\mathfrak{S}_{M}$ when $n<g+\ell$.   
 
 Now we assume that  the assertion also holds for  $n-1$ with $n\geq g+\ell$, that is, $(g\des_{\ell}, g\maj_{\ell})$ and $(g\exc_{\ell}, g\den_{h})$ are equally distributed over $\mathfrak{S}_{M'}$. 
  
    Let $\kappa=\delta_\ell+\gamma_g$. 
  Then,  we derive that
  $$
  \begin{array}{lll}
  	\sum\limits_{\substack{w\in \mathfrak{S}_{M}\\
  			g\exc_\ell(w)=t}}q^{g\den_h(w)}&=&\sum\limits_{s\leq t} \sum\limits_{\substack{w'\in \mathfrak{S}_{M'}\\
  			g\exc_\ell(w')=s}}\sum\limits_{\lambda_1\geq \lambda_2\geq \cdots \geq \lambda_{t-s}\geq t+\kappa \geq \lambda_{t-s+1}\geq \cdots\geq \lambda_{k_n}}q^{g\den_{h}(w')+|\lambda|} \\
  		 [2em]
  		&=&\sum\limits_{s\leq t} \big(\sum\limits_{\substack{w'\in \mathfrak{S}_{M'}\\
  				g\exc_\ell(w')=s}}q^{g\den_{h}(w')}\big)  \big(\sum\limits_{\lambda_1\geq \lambda_2\geq \cdots \geq \lambda_{t-s}\geq t+\kappa \geq \lambda_{t-s+1}\geq \cdots\geq \lambda_{k_n}}q^{ |\lambda|} \big)\\
  		 [2em]
  			&=&\sum\limits_{s\leq t} \big(\sum\limits_{\substack{w'\in \mathfrak{S}_{M'}\\
  				g\des_\ell(w')=s}}q^{g\maj_{\ell}(w')}\big)  \big(\sum\limits_{\lambda_1\geq \lambda_2\geq \cdots \geq \lambda_{t-s}\geq t+\kappa \geq \lambda_{t-s+1}\geq \cdots\geq \lambda_{k_n}}q^{ |\lambda|} \big)\\
  		 [2em]
  	&=&\sum\limits_{s\leq t} \sum\limits_{\substack{w'\in \mathfrak{S}_{M'}\\
  			g\des_\ell(w')=s}}\sum\limits_{\lambda_1\geq \lambda_2\geq \cdots \geq \lambda_{t-s}\geq t+\kappa \geq \lambda_{t-s+1}\geq \cdots\geq \lambda_{k_n}}q^{g\maj_{\ell}(w')+|\lambda|} \\
  		 [2em]
  		&=& \sum\limits_{\substack{w\in \mathfrak{S}_{M}\\
  				g\des_\ell(w)=t}}q^{g\maj_\ell(w)} 
  	\end{array}
  	$$
   as desired, where the first equality follows from Theorem \ref{th-Phi-den},  the third equality follows from the induction hypothesis, and the last equality follows from Theorem \ref{th-Phi-maj}.  This completes the proof.  \qed
 
 \noindent{\bf Proof of Theorem \ref{th-regular}.}
 Let $r=g+\ell-1$ and $M=\{1^k, 2^k, \ldots, n^k\}$.
 In view of Theorem \ref{thm-den-r-h}, to establish  (\ref{eq-regular}), it suffices to demonstrate that 
  $(r\des, r\maj)$ and $(g\des_\ell, g\maj_{\ell})$ are equally distributed over $\mathfrak{S}_{M}$.

  We  proceed   by induction on $n$.  For $n<g+\ell$, 
  it  is obvious that  
  $r\des(w)=g\des_{\ell}(w)=0$ and $r\maj(w)=g\maj_{\ell}(w)=\inv(w)$. This implies that  $(r\des, r\maj)$ and $(g\des_\ell, g\des_{\ell})$ are equally distributed over $\mathfrak{S}_{M}$ when $n<g+\ell$. 
  Now assume  that the assertion also holds for   $n-1$ with $n\geq g+\ell$, that is,  $(r\des, r\maj)$ and $(g\des_\ell, g\des_{\ell})$ are equally distributed over $\mathfrak{S}_{M'}$.  Let $\kappa=\delta_{\ell}+\gamma_{g}$.  It is straightforward to verify that  $\kappa=(g+\ell-2)k$.   Then,   we deduce that 
  \begin{equation}
  	\begin{array}{lll}
  		\sum\limits_{\substack{w\in \mathfrak{S}_{M}\\
  			g\des_\ell(w)=t}}q^{g\maj_\ell(w)}&=&\sum\limits_{s\leq t} \sum\limits_{\substack{w'\in \mathfrak{S}_{M'}\\
  			g\des_\ell(w')=s}}\sum\limits_{\lambda_1\geq \lambda_2\geq \cdots \geq \lambda_{t-s}\geq t+\kappa \geq \lambda_{t-s+1}\geq \cdots\geq \lambda_{k_n}}q^{g\maj_{\ell}(w')+|\lambda|}\\
  		 [2em]
  		&=&\sum\limits_{s\leq t} \sum\limits_{\substack{w'\in \mathfrak{S}_{M'}\\
  			r\des(w')=s}}\sum\limits_{\lambda_1\geq \lambda_2\geq \cdots \geq \lambda_{t-s}\geq t+(g+\ell-2)(k-1) \geq \lambda_{t-s+1}\geq \cdots\geq \lambda_{k_n}}q^{r\maj(w')+|\lambda|}\\
  		 [2em]
  				&=&\sum\limits_{s\leq t} \sum\limits_{\substack{w'\in \mathfrak{S}_{M'}\\
  					r\des(w')=s}}\sum\limits_{\lambda_1\geq \lambda_2\geq \cdots \geq \lambda_{t-s}\geq t+(r-1)(k-1) \geq \lambda_{t-s+1}\geq \cdots\geq \lambda_{k_n}}q^{r\maj(w')+|\lambda|}\\
  				 [2em]
  				&=&\sum\limits_{\substack{w\in \mathfrak{S}_{M}\\
  						r\des(w)=t}}q^{r\maj(w)},
  		\end{array}
  	 \end{equation}
  as desired, where the first and the last equalities follow from   Theorem \ref{th-Phi-maj}, and the second equality follows from the  induction hypothesis. This completes the proof. 
 \qed

 The rest of this section is organized as follows.   In Subsection 2.1,  in order to define the map $\Phi^{\den}_{g, h}$, we fisrt develop two crucal maps.  In Subsection 2.2, relying on the two maps defined in Subsection 2.1,  we construct the map $\Phi^{\den}_{g,h}$  and  complete  the proof of Theorem \ref{th-Phi-den}.  In Subsection 2.3, we establish the map $\Phi^{\maj}_{g, \ell}$ and prove that it  exhibits the properties as claimed in Theorem \ref{th-Phi-maj}. 
 \subsection{Introducing two crucial  maps}
 Before defining  the map $\Phi^{\den}_{g, h}$, we  first  develop  two crucial  maps that would play an essential role in the construction of the map $\Phi^{\den}_{g, h}$. 
Throughout  this subsection, we always assume that $g,h\geq 1$, $n\geq \max\{g+1, h\}$,    $N=M'\cup \{n^{a}\}$ and $N'=M'\cup \{n^{a-1}\}$ with $a\geq 1$.  
 Our goal is to construct  a map $\phi_{n,g,h}:\mathfrak{S}_{N'}\times  \{0,1,\ldots, m'\} \longrightarrow \mathfrak{S}_{N}$.  To this end, we  first introduce a   labeling scheme for  the spaces of  $w$. 
 \begin{definition}\label{def-den_r-label}
	Let $w=\alpha_1\alpha_2\ldots \alpha_{m'+a-1}\in \mathfrak{S}_{N'}$ and define  $\gamma_g=k_{n-g+1}+k_{n-g+2}+\cdots+k_{n-1}$.  Assume that $w$ has exactly $s$ $g$-gap $h$-level excedance places.  Then 
	the  {\bf  $g\den_{h}$-labeling} of $w$  is constructed as follows.
	\begin{itemize}
		\item  Star the space after each $\alpha_{i} $ for all $i>m'$.
		\item Label the  rightmost $\gamma_g+1$ unstarred spaces  by $0,1,\ldots, \gamma_g$. 
		\item Label the  spaces before the $g$-gap $h$-level excedance letters from right to left    with $\gamma_g+1,\gamma_g+2,\ldots, \gamma_g+s$. 
		\item  Label the remaining unstarred spaces  from left to right with $\gamma_g+s+1, \ldots, m'$.
	\end{itemize}	
\end{definition}

Take  $N=\{1^3, 2^3, 3, 4, 5^2, 6^2, 7^{2}\}$,    $w=5121264732165$ and $g=h=3$ for example.  Clearly,  we have  $\gamma_3=4$. Moreover, all the $3$-gap $3$-level excedance letters are given by $\alpha_1$, $\alpha_6$ and $\alpha_8$ which are in red. Then the $3\den_3$-labeling of $w$ is given by 
\begin{center}
\begin{table}[h]
	\centering
	\vspace{0.5em}
	\fontsize{12}{10}\selectfont
	\renewcommand{\arraystretch}{1.4}
	\setlength{\tabcolsep}{0.8em}
	\makebox[\textwidth][c]{
		\begin{tabular}{c|ccccccccccccccc}
			$i$ & 1 & 2 & 3 & 4 & 5 & 6 & 7 & 8 & 9 & 10 & 11 & 12 & 13 \\
			\hline
			\multirow{2}{*}{$\begin{array}{c} x_i \\ \alpha_i \end{array}$} 
			& \makebox[0.7em][r]{1} & \makebox[0.7em][r]{1} & \makebox[0.7em][r]{1} & \makebox[0.7em][r]{2} & \makebox[0.7em][r]{2} & \makebox[0.7em][r]{2} & \makebox[0.7em][r]{3} & \makebox[0.7em][r]{4} & \makebox[0.7em][r]{5} & \makebox[0.7em][r]{5} & \makebox[0.7em][r]{6} & \makebox[0.5em][r]{6} &
			\makebox[0.2em][r]{7}  \\
			& \makebox[0.7em][r]{$_{\blue{7}}\red{5}$} 
			& \makebox[0.7em][r]{$_{\blue{8}}1$} 
			& \makebox[0.7em][r]{$_{\blue{9}}2$} 
			& \makebox[0.7em][r]{$_{\blue{10}}1$} 
			& \makebox[0.7em][r]{$_{\blue{11}}2$} 
			& \makebox[0.7em][r]{$_{\blue{6}}\red{6}$} 
			& \makebox[0.7em][r]{$_{\blue{12}}4$} 
			& \makebox[0.7em][r]{$_{\blue{5}}\red{7}$} 
			& \makebox[0.7em][r]{$_{\blue{4}}3$} 
			& \makebox[0.7em][r]{$_{\blue{3}}2$}
			& \makebox[0.7em][r]{$_{\blue{2}}1$}
			& \makebox[1.4em][r]{$_{\blue{1}}6_{\blue{0}}$}
			& \makebox[1.1em][r]{$5_*$} \\
		\end{tabular}
	}
\end{table}
\end{center}
where the labels of the spaces are written as subscripts.

 \begin{framed}
	\begin{center}
		{\bf The map $\phi_{n,g, h}:\mathfrak{S}_{N'}\times  \{0,1,\ldots, m'\}\longrightarrow \mathfrak{S}_{N}$ }
	\end{center}
	Let $(w, c)\in  \mathfrak{S}_{N'}\times\{0, 1, \ldots, m'\} $ with $w=\alpha_1\alpha_2\ldots \alpha_{m'+a-1}$ and $\bar{w}=x_1x_2\ldots x_{m'+a-1}$.     Assume that the space before $\alpha_y$ is labeled by $c$ under the $g\den_h$-labeling of $w$ when $c>0$.  
	Define  $u=\phi_{n,g,h}(w, c)$ to be the multipermutation   constructed by distinguishing the following three cases. 
	\begin{itemize}
		\item Case 1: $c=0$. \\
	Construct   a multipermutation    $u$  from $w$ by inserting  an $n$  immediately after   $\alpha_{m'}$.

		\item Case 2:   $c>0$ and $y\in g\Excp_h(w)$.   \\
		Construct   a multipermutation    $u$  from $w$ by the following procedure.
		\begin{itemize}
	 \item  Find all the $g$-gap $h$-level excedance letters   of $w$  occurring      weakly to the right of   $\alpha_y$,  say
	 $\alpha_{i_1}, \alpha_{i_2}, \ldots, \alpha_{i_a}$ with
	  $ y=i_1<i_2<\cdots <i_a$. Find the smallest  integer $k$ satisfying $\alpha_{i_k}< x_{i_{k+1}}+g$   with the convention that $x_{i_{a+1}}=n$.
		\item    
	Choose the smallest integer  $p$ such that $x_p=\alpha_{i_k}-g+1$.	Find all the  non-$g$-gap-$h$-level excedance letters of $w$ that occur   weakly   to  the right  of $\alpha_{p}$, say $\alpha_{j_1}, \alpha_{j_2}, \ldots, \alpha_{j_b}$ with $ j_1<j_2<\cdots<j_b$.    
			\item   Replace  $\alpha_{i_1}=\alpha_y$ with an  $n$ and replace   $\alpha_{i_{z}}$  with $\alpha_{i_{z-1}}$  for all $1< z\leq k$. 
			
			\item  Replace  $\alpha_{j_1}$ with $\alpha_{i_k}$,  and  replace  $\alpha_{j_{z+1}}$ with $\alpha_{j_{z}}$ for all $1\leq z\leq b $  with the convention that $j_{b+1}=m'+a$.   
		 \end{itemize}
		 
		 	\item Case 3:   $c>0$ and  $y\notin g\Excp_h(w)$. \\ 
		 Find all the  non-$g$-gap-$h$-level excedance letters of $w$ that occur    to  the right  of $\alpha_{y}$ , say $\alpha_{j_1}, \alpha_{j_2}, \ldots, \alpha_{j_b}$ with $ j_1<j_2<\cdots<j_b$.   Generate  a multipermutation $u$ from $w$ by  replacing $\alpha_{y}$ with an  $n$,   and replacing $\alpha_{j_{z}}$ with $\alpha_{j_{z-1}}$ for all $1\leq z\leq b+1 $ with the convention that $j_{b+1}=m'+a$ and $j_0=y$.
		 
	\end{itemize}

\end{framed}

Take $N=\{1^3, 2^3, 3, 4, 5^2, 6^2, 7^2\}$ and $g=h=3$ for example. Let  
$w=5121264732165$ and $c=7$. 
Then the $3\den_3$-labeling of $w$ is given by 
\begin{center}
\begin{table}[h]
	\centering
	\vspace{0.5em}
	\fontsize{12}{10}\selectfont
	\renewcommand{\arraystretch}{1.4}
	\setlength{\tabcolsep}{0.8em}
	\makebox[\textwidth][c]{
		\begin{tabular}{c|ccccccccccccccc}
			$i$ & 1 & 2 & 3 & 4 & 5 & 6 & 7 & 8 & 9 & 10 & 11 & 12 & 13 \\
			\hline
			\multirow{2}{*}{$\begin{array}{c} x_i \\ \alpha_i \end{array}$} 
			& \makebox[0.7em][r]{1} & \makebox[0.7em][r]{1} & \makebox[0.7em][r]{1} & \makebox[0.7em][r]{2} & \makebox[0.7em][r]{2} & \makebox[0.7em][r]{2} & \makebox[0.7em][r]{3} & \makebox[0.7em][r]{4} & \makebox[0.7em][r]{5} & \makebox[0.7em][r]{5} & \makebox[0.7em][r]{6} & \makebox[0.5em][r]{6} &
			\makebox[0.2em][r]{7}  \\
			& \makebox[0.7em][r]{$_{\blue{7}}\red{5}$} 
			& \makebox[0.7em][r]{$_{\blue{8}}1$} 
			& \makebox[0.7em][r]{$_{\blue{9}}2$} 
			& \makebox[0.7em][r]{$_{\blue{10}}1$} 
			& \makebox[0.7em][r]{$_{\blue{11}}2$} 
			& \makebox[0.7em][r]{$_{\blue{6}}\red{6}$} 
			& \makebox[0.7em][r]{$_{\blue{12}}4$} 
			& \makebox[0.7em][r]{$_{\blue{5}}\red{7}$} 
			& \makebox[0.7em][r]{$_{\blue{4}}3$} 
			& \makebox[0.7em][r]{$_{\blue{3}}2$}
			& \makebox[0.7em][r]{$_{\blue{2}}1$}
			& \makebox[1.4em][r]{$_{\blue{1}}6_{\blue{0}}$}
			& \makebox[1.1em][r]{$5_*$} \\
		\end{tabular}
	}
\end{table}
\end{center}
where the labels of the spaces are written as subscripts.
 Clearly, we have $y=1\in 3\Excp_3(w)$ and  all $3$-gap $3$-level excedance letters occurring  weakly to the right of $\alpha_y$ are given by $\alpha_{i_1}, \alpha_{i_2}$,  and $ \alpha_{i_3}$   with $i_1=1, i_2=6$, and $ i_3=8$ (see Figure \ref{case2-phi} for an illustration).  It  is straightforward  to verify  that $k=2$ is the smallest  integer satisfying  $\alpha_{i_k}<x_{i_{k+1}}+3$.  Then   $p=8$ is the smallest  integer such that $x_p=\alpha_{i_2}-2$.  Observe that all  non-$3$-gap-$3$-level excedance letters   located to the right of $\alpha_{p}=\alpha_8$ are given  by $\alpha_{j_1}, \alpha_{j_2}, \alpha_{j_3}, \alpha_{j_4}$ and $ \alpha_{j_5}$   with $j_1=9,  j_2=10$, $j_3=11, j_4=12$ and  $j_5=13$.   Then $u=\beta_1\beta_2\ldots \beta_{14}=\phi_{7, 3,3}(w, c)$ is a multipermutation obtained from $w$  by    replacing  $\alpha_{i_1}=\alpha_1$ with a   $7$, replacing   $\alpha_{i_{2}}=\alpha_{6}$  with $\alpha_{i_{1}}=\alpha_1$,  replacing $\alpha_{j_1}=\alpha_{9}$ with $\alpha_{i_2}=\alpha_6$,  and  replacing   $\alpha_{j_{z+1}}$ with $\alpha_{j_{z}}$ for all $1\leq z\leq 5 $  with the convention that $j_{6}=14$ 
 as illustrated in Figure \ref{case2-phi}.

\begin{figure}
 	\begin{center}

 	\begin{tikzpicture}[scale=1]

 	\draw[lightgray, thin] (1,0) -- (1, -2.2); 
 	\draw[lightgray, thin] (0,-0.5) -- (15,-0.5); 
 	
 	\node at (0.5, 0.25) {};
 	\node[red] at (1.5, 0.25) {$i_{1}$};
 	\node at (2.5, 0.25) {};
 	\node at (3.5, 0.25) {};
 	\node at (4.5, 0.25) {};
 	\node at (5.5, 0.25) {};
 	\node[red] at (6.5, 0.25) {$i_{2}$};
 	\node[red] at (7.5, 0.25) {};
 	\node[red] at (8.5, 0.25) {$i_{3}$};
 	\node[red] at (9.5, 0.25) {$j_{1}$};
 	\node[red] at (10.5, 0.25) {$j_{2}$};
 	\node[red] at (11.5, 0.25) {$j_{3}$};
 	\node[red] at (12.5, 0.25) {$j_{4}$};
 	\node[red] at (13.5, 0.25) {$j_{5}$};

 	\node at (0.5, -0.25) {$i$};
 	\node at (1.5, -0.25) {1};
 	\node at (2.5, -0.25) {2};
 	\node at (3.5, -0.25) {3};
 	\node at (4.5, -0.25) {4};
 	\node at (5.5, -0.25) {5};
 	\node at (6.5, -0.25) {6};
 	\node at (7.5, -0.25) {7};
 	\node at (8.5, -0.25) {8};
 	\node at (9.5, -0.25) {9};
 	\node at (10.5, -0.25) {10};
 	\node at (11.5, -0.25) {11};
 	\node at (12.5, -0.25) {12};
 	\node at (13.5, -0.25) {13};
 	\node at (14.5, -0.25) {14};
 	
 	\node at (0.5, -0.75) {$x_i$};
 	\node at (1.5, -0.75) {1};
 	\node at (2.5, -0.75) {1};
 	\node at (3.5, -0.75) {1};
 	\node at (4.5, -0.75) {2};
 	\node at (5.5, -0.75) {2};
 	\node at (6.5, -0.75) {2};
 	\node at (7.5, -0.75) {3};
 	\node at (8.5, -0.75) {4};
 	\node at (9.5, -0.75) {5};
 	\node at (10.5, -0.75) {5};
 	\node at (11.5, -0.75) {6};
 	\node at (12.5, -0.75) {6};
 	\node at (13.5, -0.75) {7};
 	\node at (14.5, -0.75) {7};
 	
 	\node at (0.5, -1.2) {$\alpha_i$};
 	\node[red] at (1.5, -1.2) {5};  
 	\node at (2.5, -1.2) {1};
 	\node at (3.5, -1.2) {2};
 	\node at (4.5, -1.2) {1};
 	\node at (5.5, -1.2) {2};
 	\node[red] at (6.5, -1.2) {6};  
 	\node at (7.5, -1.2) {4};  
 	\node at (8.5, -1.2) {7};
 	\node[red] at (9.5, -1.2) {3};  
 	\node[red] at (10.5, -1.2) {2}; 
 	\node[red] at (11.5, -1.2) {1}; 
 	\node[red] at (12.5, -1.2) {6}; 
 	\node[red] at (13.5, -1.2) {5};
 	\node at (14.5, -1.2) {};
 	
 	\begin{scope}[blue, line width=0.5pt, -{Latex[bend]}, shorten >=3pt, shorten <=3pt]
 		\draw[blue] (1.5, -1.45) to[bend right=10] (6.5, -1.45);
 		
 		\draw[blue] (6.5, -1.45) to[bend right=10] (9.5, -1.45);
 		
 		\draw[blue] (9.5, -1.45) to[bend right=10] (10.5, -1.45);
 		
 		\draw[blue] (10.5, -1.45) to[bend right=10] (11.5, -1.45);
 		
 		\draw[blue] (11.5, -1.45) to[bend right=10] (12.5, -1.45);
 		\draw[blue] (12.5, -1.45) to[bend right=10] (13.5, -1.45);
 		\draw[blue] (13.5, -1.45) to[bend right=10] (14.5, -1.45);
 	\end{scope}
 	
 	\node at (0.5, -1.9) {$\beta_i$};
 	\node at (1.5, -1.9) {7};
 	\node at (2.5, -1.9) {1};
 	\node at (3.5, -1.9) {2};
 	\node at (4.5, -1.9) {1};
 	\node at (5.5, -1.9) {2};
 	\node at (6.5, -1.9) {5};
 	\node at (7.5, -1.9) {4};
 	\node at (8.5, -1.9) {7};
 	\node at (9.5, -1.9) {6};
 	\node at (10.5, -1.9) {3};
 	\node at (11.5, -1.9) {2};
 	\node at (12.5, -1.9) {1};
 	\node at (13.5, -1.9) {6};
 	\node at (14.5, -1.9) {5};
 	
 \end{tikzpicture}
\end{center}
\caption{An example of Case 2 of the map $\phi_{n,g,h}$.}\label{case2-phi}
\end{figure}

Continuing with our running example with $c=11$, we have $y=5\notin 3\Excp_{3}(w)$.  One can easily check all   non-$3$-gap-$3$-level excedance letters of $w$  occurring   to the right of $\alpha_y$ are given by $\alpha_{j_1}, \alpha_{j_2}, \alpha_{j_3}, \alpha_{j_4}, \alpha_{j_5}$,  and $\alpha_{j_6}$  with $  j_1=7, j_2=9, j_3=10, j_4=11$, $j_5=12$ and $ j_6=13$ (see Figure  \ref{case3-phi} for an illustration). Upon applying the map $\phi_{7, 3,3}$ to $(w, c)$,   we  obtain  $u=\phi_{7, 3,3}(w, c)=\beta_1\beta_2\ldots\beta_{14}$ by  replacing $\alpha_y=\alpha_5$ with a $7$ and      replacing   $\alpha_{j_{z+1}}$ with $\alpha_{j_{z}}$ for all $0\leq z\leq 6 $  with the convention that $j_{7}=14$ and $ j_0=y$ as illustrated in Figure  \ref{case3-phi}.
\begin{figure}
	\begin{center}
 
		\begin{tikzpicture}[scale=1]
		
		\draw[lightgray, thin] (1,0) -- (1, -2.2); 
		\draw[lightgray, thin] (0,-0.5) -- (15,-0.5); 
		
		\node at (0.5, 0.25) {};
		\node at (1.5, 0.25) {};
		\node at (2.5, 0.25) {};
		\node at (3.5, 0.25) {};
		\node at (4.5, 0.25) {};
		\node at (5.5, 0.25) {};
		\node at (6.5, 0.25) {};
		\node[red] at (7.5, 0.25) {$j_{1}$};
		\node at (8.5, 0.25) {};
		\node[red] at (9.5, 0.25) {$j_{2}$};
		\node[red] at (10.5, 0.25) {$j_{3}$};
		\node[red] at (11.5, 0.25) {$j_{4}$};
		\node[red] at (12.5, 0.25) {$j_{5}$};
		\node[red] at (13.5, 0.25) {$j_{6}$};
		\node at (14.5, 0.25) {};
		
		\node at (0.5, -0.25) {$i$};
		\node at (1.5, -0.25) {1};
		\node at (2.5, -0.25) {2};
		\node at (3.5, -0.25) {3};
		\node at (4.5, -0.25) {4};
		\node at (5.5, -0.25) {5};
		\node at (6.5, -0.25) {6};
		\node at (7.5, -0.25) {7};
		\node at (8.5, -0.25) {8};
		\node at (9.5, -0.25) {9};
		\node at (10.5, -0.25) {10};
		\node at (11.5, -0.25) {11};
		\node at (12.5, -0.25) {12};
		\node at (13.5, -0.25) {13};
		\node at (14.5, -0.25) {14};
		
		\node at (0.5, -0.75) {$x_i$};
		\node at (1.5, -0.75) {1};
		\node at (2.5, -0.75) {1};
		\node at (3.5, -0.75) {1};
		\node at (4.5, -0.75) {2};
		\node at (5.5, -0.75) {2};
		\node at (6.5, -0.75) {2};
		\node at (7.5, -0.75) {3};
		\node at (8.5, -0.75) {4};
		\node at (9.5, -0.75) {5};
		\node at (10.5, -0.75) {5};
		\node at (11.5, -0.75) {6};
		\node at (12.5, -0.75) {6};
		\node at (13.5, -0.75) {7};
		\node at (14.5, -0.75) {7};
		
		\node at (0.5, -1.2) {$\alpha_i$};
		\node at (1.5, -1.2) {5};  
		\node at (2.5, -1.2) {1};
		\node at (3.5, -1.2) {2};
		\node at (4.5, -1.2) {1};
		\node[red] at (5.5, -1.2) {2};
		\node at (6.5, -1.2) {6};  
		\node[red] at (7.5, -1.2) {4};  
		\node at (8.5, -1.2) {7};
		\node[red] at (9.5, -1.2) {3};  
		\node[red] at (10.5, -1.2) {2}; 
		\node[red] at (11.5, -1.2) {1}; 
		\node[red] at (12.5, -1.2) {6}; 
		\node[red] at (13.5, -1.2) {5};
		\node at (14.5, -1.2) {};
		
		\begin{scope}[blue, line width=0.5pt, -{Latex[bend]}, shorten >=3pt, shorten <=3pt]
			
			\draw[blue] (5.5, -1.45) to[bend right=10] (7.5, -1.45);
			
			\draw[blue] (7.5, -1.45) to[bend right=10] (9.5, -1.45);
			
			\draw[blue] (9.5, -1.45) to[bend right=10] (10.5, -1.45);
			\draw[blue] (10.5, -1.45) to[bend right=10] (11.5, -1.45);
			
			\draw[blue] (11.5, -1.45) to[bend right=10] (12.5, -1.45);
			\draw[blue] (12.5, -1.45) to[bend right=10] (13.5, -1.45);
			\draw[blue] (13.5, -1.45) to[bend right=10] (14.5, -1.45);
		\end{scope}
		
		\node at (0.5, -1.9) {$\beta_i$};
		\node at (1.5, -1.9) {5};
		\node at (2.5, -1.9) {1};
		\node at (3.5, -1.9) {2};
		\node at (4.5, -1.9) {1};
		\node at (5.5, -1.9) {7};
		\node at (6.5, -1.9) {6};
		\node at (7.5, -1.9) {2};
		\node at (8.5, -1.9) {7};
		\node at (9.5, -1.9) {4};
		\node at (10.5, -1.9) {3};
		\node at (11.5, -1.9) {2};
		\node at (12.5, -1.9) {1};
		\node at (13.5, -1.9) {6};
		\node at (14.5, -1.9) {5};
		
	\end{tikzpicture}
\end{center}
\caption{An example of Case 3 of the map $\phi_{n,g,h}$.}\label{case3-phi}
\end{figure}

Now  we proceed to show that the map $\phi_{n,g,h}$ exhibits   the following celebrated properties.

\begin{lemma}\label{lem-phi-den1}
Let $(w, c)\in  \mathfrak{S}_{N'}\times  \{0,1, \ldots, m'\}$ with $w=\alpha_1\alpha_2\ldots \alpha_{m'+a-1}$ and  $\bar{w}=x_1x_2$ $\ldots x_{m'+a-1}$. Assume that $w$ has exactly $s$ $g$-gap $h$-level excedance places.
Choose the integer $y$ such that    the space before  the letter  $\alpha_y$ is labeled by $c$  under the   $g\den_h$-labeling  of $w$ when $c>0$,  and let $y=m'+1$ when $c=0$.   Then we have 
\begin{equation}\label{eq-Excp_r}
	g\Excp_h(\phi_{n,g,h}(w, c))=\left\{ \begin{array}{ll}
		g\Excp_h(w)&\, \mathrm{if}\,\,   0\leq c\leq s+\gamma_g  \\
		g\Excp_h(w)\cup \{y\}&\, \mathrm{otherwise}.
	\end{array}
	\right.
\end{equation}
Moreover, 
	if  $\alpha_j\neq n$ for all $j<y$, then we have 
	\begin{equation}\label{eq-den_r}
g\den_h(\phi_{n,g,h}(w, c))=g\den_h(w)+c.
	\end{equation}
	\end{lemma}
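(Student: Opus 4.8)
The plan is to unfold the construction of $\phi_{n,g,h}$ and treat its three cases separately: $c=0$ (Case~1), $c>0$ with $y\in g\Excp_h(w)$ (Case~2), and $c>0$ with $y\notin g\Excp_h(w)$ (Case~3). Two structural facts will be used throughout. First, writing $u=\phi_{n,g,h}(w,c)$, the weakly increasing rearrangement of $u$ is $\bar u=x_1x_2\cdots x_{m'+a-1}\,n$, obtained from $\bar w$ by appending one more copy of the maximal letter $n$; in particular $\bar u$ and $\bar w$ agree in position $j$ for $j\le m'+a-1$ and carry the same multiplicity of every value $<n$. Second, a position with base entry $x_i\ge n-g+1$ is never a $g$-gap $h$-level excedance place (that would force $\alpha_i\ge n+1$), so all $s$ excedance places of $w$ lie in $\{1,\dots,m'-\gamma_g\}$ while the $\gamma_g$ positions $m'-\gamma_g+1,\dots,m'$ are non-excedance. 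Matching these against the four rules of Definition~\ref{def-den_r-label}, one reads off that $c\in\{\gamma_g+1,\dots,\gamma_g+s\}$ is exactly the range with $y\in g\Excp_h(w)$ (Case~2), that $c\in\{1,\dots,\gamma_g\}$ puts $y$ among the last $\gamma_g$ positions (so $x_y\ge n-g+1$, Case~3), and that $c\in\{\gamma_g+s+1,\dots,m'\}$ puts $y$ at a non-excedance position with $x_y\le n-g$ (Case~3 again); moreover, with $q:=|g\Excp_h(w)\cap[1,y-1]|$, the rules give $c=m'-y+1$ in the first sub-case of Case~3, $c=\gamma_g+s-q$ in Case~2, and $c=\gamma_g+s+(y-q)$ in the second sub-case of Case~3.

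For \eqref{eq-Excp_r} I compare $\beta_i$ with $\bar u_i$ position by position. In Case~1 the inserted $n$ occupies a slot with base entry $n$, hence is not an excedance place, while every other position retains its letter and base entry, so $g\Excp_h(u)=g\Excp_h(w)$. In Case~3 the letter $n$ is placed at position $y$, where it becomes a $g$-gap $h$-level excedance place precisely when $x_y\le n-g$, i.e. precisely when $c>\gamma_g+s$; each other modified position (the $j_z$ and the new last slot) receives a letter that previously sat at a weakly earlier position, hence at one with a weakly smaller base entry, where it was non-excedance, so it stays non-excedance, and untouched positions keep their status. In Case~2 the positions $i_1,\dots,i_k$ remain excedance: $\beta_{i_1}=n$ with $x_{i_1}=x_y\le n-g$ (since $\alpha_y\ge x_y+g$), and for $1<z\le k$ the minimality of $k$ forces $\alpha_{i_{z-1}}\ge x_{i_z}+g$, with $\alpha_{i_{z-1}}\ge h$ because it is an excedance letter of $w$; meanwhile $\beta_{j_1}=\alpha_{i_k}$ is non-excedance because $x_p=\alpha_{i_k}-g+1$ gives $\alpha_{i_k}<x_p+g\le x_{j_1}+g$, and the remaining replacements again send non-excedance letters to weakly later positions. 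Assembling the cases yields \eqref{eq-Excp_r}.

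For \eqref{eq-den_r} assume $\alpha_j\ne n$ for all $j<y$ and decompose $g\den_h=\Sigma_{\mathrm{pl}}+\imv(g\Exc_h)+\inv(g\Nexc_h)$ with $\Sigma_{\mathrm{pl}}=\sum_{i\in g\Excp_h}(i+B^g_i)$. Since no letter exceeds $n$, no interval $(\alpha_i-g,\alpha_i)$ contains $n$; together with the first structural fact this gives $B^g_i(u)=B^g_i(w)$ for every excedance position of $w$, the identity $B^g_{i_z}(u)=B^g_{i_{z-1}}(w)$ for $1<z\le k$ in Case~2, and $B^g_y(u)=|\{j:n-g<\bar u_j<n\}|=\gamma_g$ whenever $y$ becomes excedance. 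In Case~1 and in the sub-case $c\le\gamma_g$ of Case~3, $g\Exc_h$ is unchanged and $g\Nexc_h(u)$ is $g\Nexc_h(w)$ with a single $n$ inserted just before the subsequence of non-excedance letters at positions $\ge y$; by the hypothesis this subsequence consists of the $a-1$ old copies of $n$ together with exactly $m'-y+1$ letters $<n$ (and nothing to its left is an $n$), so the insertion adds $m'-y+1=c$ inversions and $g\den_h$ grows by $c$. In the sub-case $c>\gamma_g+s$ of Case~3, $g\Nexc_h$ is unchanged, while $g\Exc_h(u)$ is $g\Exc_h(w)$ with an $n$ inserted after the $q$ excedance letters preceding $y$ (none of which is an $n$), contributing $s-q$ to $\imv(g\Exc_h)$; with the new place-term $y+B^g_y(u)=y+\gamma_g$ the increment is $(y+\gamma_g)+(s-q)=c$.

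The genuinely delicate case is Case~2. There $g\Exc_h(u)$ is obtained from $g\Exc_h(w)$ by deleting $\alpha_{i_k}$ from the excedance segment starting at $\alpha_y$ and prepending $n$, and $g\Nexc_h(u)$ by inserting $\alpha_{i_k}$ just before $\alpha_{j_1}$; the telescoping of the $B^g$-changes collapses $\Sigma_{\mathrm{pl}}(u)-\Sigma_{\mathrm{pl}}(w)$ to $\gamma_g-B^g_{i_k}(w)$, so it remains to show that $\imv(g\Exc_h)$ and $\inv(g\Nexc_h)$ together increase by $s-q+B^g_{i_k}(w)$, which would give the total increment $(\gamma_g-B^g_{i_k}(w))+(s-q+B^g_{i_k}(w))=\gamma_g+s-q=c$. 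I would establish this by matching, pair by pair, the weakly-inversion pairs of $g\Exc_h$ meeting the prepended $n$ or the removed $\alpha_{i_k}$ against the inversion pairs of $g\Nexc_h$ meeting the inserted $\alpha_{i_k}$, and classifying the partner letters according to whether they are excedance or non-excedance letters of $w$ and according to whether their value lies below $\alpha_{i_k}-g$, in $\{\alpha_{i_k}-g+1,\dots,\alpha_{i_k}\}$, or above $\alpha_{i_k}$: the choice of $p$ with $x_p=\alpha_{i_k}-g+1$ is made precisely so that the partners in the middle band are exactly those counted by $B^g_{i_k}(w)$, and the gains and losses across the three bands then cancel to leave $s-q+B^g_{i_k}(w)$. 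I expect this matching to be the main obstacle; the other cases are routine bookkeeping once the two structural facts and the formulas for $c$ in terms of $y$, $q$, $s$, and $\gamma_g$ are in hand.
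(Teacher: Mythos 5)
Your treatment of \eqref{eq-Excp_r}, and of \eqref{eq-den_r} in the cases $c=0$ and $y\notin g\Excp_h(w)$, is complete and essentially identical to the paper's: the same case split, the same formulas $c=m'-y+1$, $c=\gamma_g+s-q$ and $c=\gamma_g+s+(y-q)$ read off from the labeling, and the same bookkeeping of $\sum_{i}(i+B^g_i)$, $\imv(g\Exc_h)$ and $\inv(g\Nexc_h)$. In the remaining case $y\in g\Excp_h(w)$ your reduction is also correct: the telescoping identity $\Sigma_{\mathrm{pl}}(u)-\Sigma_{\mathrm{pl}}(w)=\gamma_g-B^g_{i_k}(w)$ holds, and it does remain to show that $\imv(g\Exc_h)+\inv(g\Nexc_h)$ increases by exactly $s-q+B^g_{i_k}(w)$.

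That last increment, however, is where the real content of the lemma lies, and you have only described a strategy (``match pairs\dots classify partners into three bands\dots the gains and losses cancel'') without carrying it out --- as you yourself acknowledge. What is missing is the structural information about the word between positions $i_k$, $p$ and $j_1$ that makes any such count close. Concretely, the paper proves: (i) there is no $g$-gap $h$-level excedance place strictly between $i_k$ and $p$, and every letter there is $<\alpha_{i_k}$ (otherwise the minimality of $k$ or of $p$ would be violated); (ii) every position in $[p,j_1)$ is an excedance place. From these it deduces that every non-excedance letter to the left of the inserted $\alpha_{i_k}$ is $<\alpha_{i_k}$ (so the insertion creates no inversions with letters on its left), and that exactly $t+1+B^g_{i_k}(w)$ non-excedance letters to its right are $<\alpha_{i_k}$, where $t$ is the number of excedance letters left of $\alpha_{i_k}$ that are $\geq\alpha_{i_k}$. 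The $+t$ gained in $\inv(g\Nexc_h)$ then cancels the $-t$ lost from $\imv(g\Exc_h)$ when $\alpha_{i_k}$ leaves the excedance word, and the $+B^g_{i_k}(w)$ cancels the $-B^g_{i_k}(w)$ from the place-sum, leaving the increment $\gamma_g+z+1=c$. Your three-band classification is the right heuristic, but without (i) and (ii) the assertion that ``the partners in the middle band are exactly those counted by $B^g_{i_k}(w)$'' is unproven; the proposal therefore has a genuine gap at precisely its crux.
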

\begin{proof}
We shall proceed the proofs of  (\ref{eq-Excp_r})	and (\ref{eq-den_r}) by considering the following three cases.

\noindent{\bf Case 1:} $c=0$. 
In this case, the multipermutation $u=\phi_{n,g,h}(w, 0)$ is obtained from $w$ by inserting an $n$ immediately after $\alpha_{m'}$. It is straightforward to see that we have $ 
g\Excp_h(u)=g\Excp_h(w).
$ 	If $\alpha_j\neq n$ for all $j\leq m'$, then all the $n's$ occurs to the right of $\alpha_{m'}$. This implies that each letter lying to the right of $\alpha_{m'}$ is equal to $n$. Then inserting  an $n$ immediately after $\alpha_{m'}$ does not affect  $g\den_{h}(w)$, implying that 
$g\den_h(u)=g\den_h(w)$ as desired.

\noindent{\bf Case 2:}  $c>0$ and $y\notin g\Excp_h(w)$.\\
Recall that $u=\phi_{n,g,h}(w,c)$ is constructed  as follows.
	First, find all the  non-$g$-gap-$h$-level excedance letters of $w$ that occur    to  the right  of $\alpha_{y}$, say $\alpha_{j_1}, \alpha_{j_2}, \ldots, \alpha_{j_b}$ with $ j_1<j_2<\cdots<j_b$.  Then  replace $\alpha_{y}$ with an  $n$,   and replace  $\alpha_{j_{z+1}}$ with $\alpha_{j_{z}}$ for all $0\leq z\leq b $ with the convention that $j_{b+1}=m'+a$ and $j_0=y$.
	
	It is easily seen that  the procedure  from $w$ to $u$ keeps  all the $g$-gap $h$-level excedance letters of $w$  in   place.  Moreover,   this  procedure  also transforms $y$ into   a new  $g$-gap $h$-level excedance place when $y\leq m'-\gamma_g$ since for all $y\leq m'-\gamma_g$, we have $n\geq x_{y}+g$. 
	By the rules specified in the $g\den_h$-labeling of $w$, we have $c\leq \gamma_g$ when $y>m'-\gamma_g$,  and   $c\geq s+\gamma_g+1$ when $y\leq m'-\gamma_g$ and $y\notin g\Excp_h(w)$.  Therefore, we deduce that
	 $$
	 	g\Excp_h(u)=\left\{ \begin{array}{ll}
	 	g\Excp_h(w)&\, \mathrm{if}\,\,   1\leq c\leq s+\gamma_g  \\
	 	g\Excp_h(w)\cup \{y\}&\, \mathrm{otherwise},
	 \end{array}
	 \right.
	 $$
	as stated in (\ref{eq-Excp_r}). 
	       
Now we proceed to verify  (\ref{eq-den_r}).  We have two subcases. 

\noindent{\bf Subcase 2.1:} $y\leq  m'-\gamma_g$.\\
 Assume that there are exactly  $z$ $g$-gap $h$-level excedance letters that are located to the right of $\alpha_y$. 
By the rules   specified in the $g\den_h$-labeling of $w$,  one can easily check that $c=y+z+\gamma_g>s+\gamma_g$. In order to verify (\ref{eq-den_r}), it suffices to show that $g\den_h(u)=g\den_h(w)+y+z+\gamma_g$. 
Recall that 
$$
g\den_h(w)=\sum\limits_{i\in g\Excp_h(w)} (i+B^g_i(w)) +{\imv}(g\Exc_h(w))+{\inv}(g\Nexc_h (w)),
$$
	where $B^{g}_i(w)=|\{j \mid \alpha_i -g< x_j < \alpha_i\}|$. 
By (\ref{eq-Excp_r}), we have $g\Excp_h(u)=g\Excp_h(w)\cup \{y\}$.  By the construction of $u$, the procedure from $w$ to $u$ preserves all the non-$g$-gap-$h$-level excedance letters of $w$ as well as their relative order.  This implies that $g\Nexc_h(w)=g\Nexc_h(u)$.    Again by the construction of $u$, the procedure from $w$ to $u$ keeps all the $g$-gap $h$-level excedance letters of $w$ in place. 
Therefore, the procedure from $w$ to $u$  increases the sum 
$ 
\sum\limits_{i\in g\Excp_h(w)} (i+B^g_i(w))
$ by $y+B^g_y(u)=y+\gamma_g$ and does not affect ${\inv}(g\Nexc_h (w))$. Clearly, replacing $\alpha_y$ by an $n$ increases ${\imv}(g\Exc_h(w))$  by $z$ since there does not exist any $n's$ occurring to the left of $\alpha_y$ and there are exactly $z$ $g$-gap $h$-level excedance letters that are located to the right of $\alpha_y$ in $w$. Thus, we derive that $g\den_h(u)=g\den_h(w)+y+z+\gamma_g=g\den_h(w)+c$ as desired.

\noindent{\bf Subcase 2.2:} $y>  m'-\gamma_g$.\\
By the rules   specified in the $g\den_h$-labeling of $w$,  one can easily check that $c=m'+1-y\leq \gamma_g$. In order to verify (\ref{eq-den_r}), it suffices to show that $g\den_h(u)=g\den_h(w)+m'+1-y$. 
Recall that 
$$
g\den_h(w)=\sum\limits_{i\in g\Excp_h(w)} (i+B^g_i(w)) +{\imv}(g\Exc_h(w))+{\inv}(g\Nexc_h (w)).
$$
   By the construction of $u$, the procedure from $w$ to $u$ keeps all the $g$-gap $h$-level excedance letters in place.  By (\ref{eq-Excp_r}), we have $g\Excp_h(u)=g\Excp_h(w)$.  It follows that   the procedure from $w$ to $u$ does not affect the sum $\sum\limits_{i\in g\Excp_h(w)} (i+B^g_i(w))$ and $\imv(g\Exc_h(w))$.  
   Assume that there are exactly $z$ letters that are smaller than $n$  and   occur weakly to the  right of $\alpha_y$ in $w$. Then   replacing $\alpha_y$ by an $n$  would increase  ${\inv}(g\Nexc_h(w))$  by $z$. 
   Hence we have $g\den_h(u)=g\den_h(w)+z$. Recall that there are no $n's$ that are located  to the left of $\alpha_y$ in $u$. This implies that $y=m'+1-z$. Therefore, we derive that
   $$
   g\den_h(u)=g\den_h(w)+z=g\den_h(w)+m'+1-y=g\den_h(w)+c
   $$
   as desired.

\noindent{\bf Case 3:}  $c>0$ and $y\in g\Excp_h(w)$.\\
Recall that   $u=\beta_1\beta_2\ldots\beta_{m'+a}=\phi_{n,g,h}(w,c)$ is obtained  from $w$ as follows.
	\begin{itemize}
	\item  Find all the $g$-gap $h$-level excedance letters   of $w$  occurring      weakly to the right of   $\alpha_y$,  say
	$\alpha_{i_1}, \alpha_{i_2}, \ldots, \alpha_{i_a}$ with
	$ y=i_1<i_2<\cdots <i_a$. Find the smallest  integer $k$ satisfying $\alpha_{i_k}< x_{i_{k+1}}+g$   with the convention that $x_{i_{a+1}}=n$.
	\item    
	Choose the smallest integer  $p$ such that $x_p=\alpha_{i_k}-g+1$.	Find all the  non-$g$-gap-$h$-level excedance letters of $w$ that occur   weakly   to  the right  of $\alpha_{p}$, say $\alpha_{j_1}, \alpha_{j_2}, \ldots, \alpha_{j_b}$ with $ j_1<j_2<\cdots<j_b$.    
	\item   Replace  $\alpha_{i_1}=\alpha_y$ with an  $n$ and replace   $\alpha_{i_{z}}$  with $\alpha_{i_{z-1}}$  for all $1< z\leq k$. 
	
	\item  Replace  $\alpha_{j_1}$ with $\alpha_{i_k}$,  and  replace  $\alpha_{j_{z+1}}$ with $\alpha_{j_{z}}$ for all $1\leq z\leq b $  with the convention that $j_{b+1}=m'+a$.   
\end{itemize}

 Assume that there are exactly  $z$ $g$-gap  $h$-level excedance letters that are located to the right of $\alpha_y$. 
By the rules specified in the $g\den_h$-labeling of $w$, we have $c=z+1+\gamma_g\leq \gamma_g+s$.  In order to verify (\ref{eq-Excp_r}), it suffices to show that  $g\Excp_h(w)=g\Excp_h(u)$. 
According to the choices   of $k$ and $p$, we have $\alpha_{i_z}\geq  x_{i_{z+1}}+g$ for all $z<k$ and $\alpha_{i_k}< x_{j_1}+g$. This guarantees  that   the procedure from $w$ to $u$ keeps  all  the $g$-gap $h$-level excedance places, yielding that $g\Excp_h(w)=g\Excp_h(u)$ as desired.

Recall that $c=z+1+\gamma_g$. 
  In order to verify (\ref{eq-den_r}), it suffices to show that $g\den_h(u)=g\den_h(w)+z+1+\gamma_g$. 
Recall that 
$$
g\den_h(w)=\sum\limits_{i\in g\Excp_h(w)}(i+B^g_i(w)) +{\imv}(g\Exc_h(w))+{\inv}(g\Nexc_h (w)).
$$
Since $g\Excp_h(u)=g\Excp_h(w)$,  by the construction of $u$, the procedure from $w$ to $u$ transforms $\beta_{y}=n$ into a new $g$-gap $h$-level excedance letter, transforms $\alpha_{i_k}$ into a non-$g$-gap-$h$-level excedance letter, and preserves the other $g$-gap $h$-level excedance letters. Recall that 
$\beta_{i_j}=\alpha_{i_{j-1}}$ for all $1<j\leq k$. This implies that $B^g_{i_j}(u)=B^g_{i_{j-1}}(w)$ for all $1<j\leq k$. 
 Hence, we deduce that 
\begin{equation}\label{eq-sum}
	\begin{array}{lll}
\sum\limits_{i\in g\Excp_h(u)} (i+B^g_i(u))&=&B^g_{y}(u)-B^g_{i_k}(w)+\sum\limits_{i\in g\Excp_h(w)} (i+B^g_i(w))\\
&=&\gamma_g-B^g_{i_k}(w)+\sum\limits_{i\in g\Excp_h(w)} (i+B^g_i(w)),
\end{array}
\end{equation} 
  where  the second equality follows from the fact that  $B^g_y(u)=\gamma_g$.  
  Recall that there are no occurrences of $n$ to the left of $\alpha_y$. 
By the  equality $g\Excp_h(u)=g\Excp_h(w)$,  one can easily check that the procedure from $w$ to $u$ preserves the number of $g$-gap $h$-level excedance letters that are located to the right of $\alpha_y$. This implies that replacing $\alpha_{y}$ with  an $n$ would increase  ${\imv}(g\Exc_h(w))$ by $z$. 

Assume that there are exactly $t$ $g$-gap $h$-level excedance letters that are located to the left of $\alpha_{i_{k}}$ and  are greater than or equal to $\alpha_{i_k}$. Again by the choice  of $i_k$, all the $g$-gap $h$-level excedance letters located to the right of $\alpha_{i_k}$ are greater  than $\alpha_{i_k}$.  Recall that in the  the procedure from $w$ to $u$, we  transforms the letter  $\alpha_{i_k}$ into a non-$g$-gap-$h$-level excedance letter.    This procedure would decrease ${\imv}(g\Exc_h(w))$ by $t$.   Hence, we deduce that \begin{equation}\label{eq-imv}
	{\imv}(g\Exc_h(u))={\imv}(g\Exc_h(w))+z-t.
\end{equation}
In view of  (\ref{eq-sum}) and (\ref{eq-imv}), in order to prove $g\den_h(u)=g\den_h(w)+z+1+\gamma_g$, it remains to show that 
\begin{equation}\label{eq-inv}
	{\inv}(g\Nexc_h(u))={\inv}(g\Nexc_h(w))+t+1+B^g_{i_k}(w).
\end{equation} 
 Note that the procedure from $w$ to $u$  preserves    all the non-$g$-gap-$h$-level excedance letters of $w$ as well as their relative order,  and transforms  $\alpha_{i_k}$ into a non-$g$-gap-$h$-level excedance letter.  In order to prove (\ref{eq-inv}),  it suffices to  verify the following facts.

\noindent{\bf Fact 1:} All the non-$g$-gap-$h$-level excedance letters lying  to the left of $\beta_{j_{1}}$ are smaller than    $\beta_{j_{1}}=\alpha_{i_k}$ in $u$.
 
  \noindent{\bf Fact 2:} There are exactly $t+1+B^g_{i_k}(w)$ non-$g$-gap-$h$-level excedance letters that are located to the right of $\beta_{j_{1}}$ and   are smaller than  $\beta_{j_{1}}=\alpha_{i_k}$ in $u$.

  Since $i_k\in g\Excp_{h}(w)$,  it follows that all the non-$g$-gap-$h$-level excedance letters occurring  to the left of $\alpha_{i_{k}}$  are smaller than $\alpha_{i_k}$.  In order to prove Fact $1$, it suffices  to verify the following two claims. 
  
  \noindent{\bf Claim 1:} For all $i_k<j<p$, we have $j\notin g\Excp_h(w)$.\\
   If not,  choose $q$ be the smallest  integer such that $i_k<q<p$ and 
  $q\in g\Excp_h(w)$.  Then we have $i_{k+1}=q$. According to the choice of $p$, we have $x_q\leq \alpha_{i_k}-g$. Then we have $\alpha_{i_k}\geq x_q+g=x_{i_{k+1}}+g$,  yielding a contradiction with the choice of $i_k$ .  Hence, we conclude that $j\notin g\Excp_{h}(w)$ for all $i_k<j<p$ as claimed.
  
  \noindent{\bf Claim 2:} For all  $i_k<j<p$, we have $\alpha_j< \alpha_{i_k}$. \\
  If not, assume that $\alpha_q\geq \alpha_{i_k}$ for some $i_k<q<p$.  This combined with Claim  $1$ yields that  $\alpha_{i_k}\leq \alpha_q\leq x_{q}+g-1$, implying  that $x_q\geq \alpha_{i_k}+g-1$. This contradicts     the choice of $p$, completing  the proof of Claim $2$. 
  
   By the construction of $u$, in order to prove  Fact 2, it suffices to show that there are exactly $t+1+B^g_{i_k}(w)$ letters that are located weakly to the right of  $\alpha_{j_{1}}$ and  are smaller than $\alpha_{i_k}$ in $w$. 
  The following claim follows directly from   the selection of $j_{1}$. \\
  \noindent{\bf  Claim 3:} For all $p\leq j<j_{1}$, we have $j\in g\Excp_{h}(w)$. \\
   Assume that 
   $$
   u=|\{j\mid x_p\leq \alpha_{j}<\alpha_{i_k}, j<p\} |  
   $$
   and 
   $$
   v=|\{j\mid x_p\leq \alpha_{j}<\alpha_{i_k}, j\geq j_{1}\} | .
   $$
   Recall that $B^g_{i_k}(w)=|\{j\mid \alpha_{i_k}-g+1\leq x_{j}<\alpha_{i_k} \}|=|\{j\mid x_p\leq \alpha_{j}<\alpha_{i_k} \}|$.
   Claim  3 ensures that $u+v=B^g_{i_k}(w)$.  
   Recall that there are exactly $t$ $g$-gap $h$-level excedance letters that are located to the left of $\alpha_{i_k}$ and   are greater than or equal to $\alpha_{i_k}$ in $w$.  This   combined with Claim 2 tells us that  there are exactly $t+1$ letters that are greater than or equal to $\alpha_{i_k}$ and   are located to the left of $\alpha_{p}$ in $w$.    So there are exactly $u+t+1$ letters that are located to the left of $\alpha_p$ and  are greater than or equal to $x_p$. This implies that there are exactly $u+t+1$ letters that are located weakly to the right of  $\alpha_{p}$ and  are smaller than $x_p=\alpha_{i_k}-g+1$. Hence, there are exactly $u+t+1+v=t+1+B^g_{i_k}(w)$ letters that are located weakly to the right of  $\alpha_{p}$ and  are smaller than $\alpha_{i_k}$. 
    By Claim 3, such $t+1+B^g_{i_k}(w)$   letters  occur weakly to the right of $\alpha_{j_{1}}$ in $w$, completing the proof of Fact 2. 
	\end{proof}

  \begin{lemma}\label{lem-phi-den3}
 Let $(w, c)\in  \mathfrak{S}_{N'}\times  \{0,1, \ldots, m'\}$ with $w=\alpha_1\alpha_2\ldots \alpha_{m'+a-1}$ and  $\bar{w}=x_1x_2\ldots$ $ x_{m'+a-1}$.
  For all $g, \ell\geq 1$, if  $n\geq g+\ell\geq h\geq 1$, then we have
  	\begin{equation}\label{eq-gexc_l}
  	g\exc_\ell(\phi_{n,g,h}(w, c))=\left\{ \begin{array}{ll}
  			g\exc_\ell(w)&\, \mathrm{if}\,\,   0\leq c\leq g\exc_\ell(w) +\delta_\ell+\gamma_g  \\
  			g\exc_{\ell}(w)+1&\, \mathrm{otherwise}.\end{array}
  		\right.
  		\end{equation}

  	\end{lemma}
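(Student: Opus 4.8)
The plan is to follow the three cases in the definition of $\phi_{n,g,h}$ and track, in each, how $g\exc_\ell$ changes when we pass from $w$ to $u:=\phi_{n,g,h}(w,c)$. Write $x^v$ for the weakly increasing rearrangement of a word $v$ and say a slot $i$ of $v$ is \emph{counted} if $v_i\ge x^v_i+g$ and $x^v_i\ge\ell$, so $g\exc_\ell(v)$ is the number of counted slots. Recall $\bar u$ is obtained from $\bar w$ by appending one extra $n$, so $x^u_i=x^w_i$ for $i\le m'+a-1$ and $x^u_{m'+a}=n$; and since $n\ge g+\ell$, the $\delta_\ell$ smallest entries of $\bar w$ are exactly those $<\ell$ while its $\gamma_g+a-1$ largest are exactly those $>n-g$, so for every slot $y$ one has $\ell\le x^w_y\le n-g$ iff $\delta_\ell<y\le m'-\gamma_g$. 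Two observations, both using $h\le g+\ell$, will do most of the work. First, a non-$g$-gap-$h$-level excedance letter $\alpha$ of $w$ is never counted in $u$ at the slot $\phi_{n,g,h}$ moves it to (which lies weakly to the right of its original slot): otherwise, at a slot with rearrangement value $x$ we would have $\alpha\ge x+g$ and $x\ge\ell$, hence $\alpha\ge\ell+g\ge h$, and as $\alpha\ge x^w_{\mathrm{old}}+g$ held at the original slot as well, $\alpha$ would have been a $g$-gap-$h$-level excedance letter of $w$, a contradiction. Second, since every slot $i$ of $w$ with $x^w_i\ge\ell$ automatically has $\alpha_i\ge h$ once $\alpha_i\ge x^w_i+g$, we obtain
\[
|g\Excp_h(w)|=g\exc_\ell(w)+|\{\,i\in g\Excp_h(w):i\le\delta_\ell\,\}|,
\]
in particular $|g\Excp_h(w)|\le g\exc_\ell(w)+\delta_\ell$; moreover $i\in g\Excp_h(w)$ forces $x^w_i\le n-g$ because $\alpha_i\le n$.

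Cases $1$ and $2$ both land in the first branch. In Case $1$ ($c=0$), $n$ is inserted right after $\alpha_{m'}$; every slot from $m'+1$ on has rearrangement value $n$ and so is uncounted, the earlier slots are untouched, and $g\exc_\ell(u)=g\exc_\ell(w)$, which agrees with the first branch since $0\le g\exc_\ell(w)+\delta_\ell+\gamma_g$. In Case $2$ ($c>0$, $y=i_1\in g\Excp_h(w)$) I will check that no slot changes its counted status: every slot outside $\{i_1,\dots,i_k,j_1,\dots,j_b,m'+a\}$ is unchanged; the reshuffled non-$g$-gap-$h$-level excedance letters are uncounted (first observation) and slot $m'+a$ is uncounted (rearrangement value $n$); the relocated letter $\alpha_{i_k}$ is uncounted at slot $j_1$ because $\alpha_{i_k}=x^w_p+g-1<x^w_{j_1}+g$ by the choices of $p$ and $j_1$; for $2\le z\le k$ slot $i_z$ receives $\alpha_{i_{z-1}}$, which satisfies $\alpha_{i_{z-1}}\ge x^w_{i_z}+g$ by the choice of $k$, so it is counted iff $x^w_{i_z}\ge\ell$, exactly as $\alpha_{i_z}$ was in $w$; and the new $n$ at slot $i_1$ has $n\ge x^w_{i_1}+g$, so it is counted iff $x^w_{i_1}\ge\ell$, exactly as $\alpha_{i_1}$ was in $w$. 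Hence $g\exc_\ell(u)=g\exc_\ell(w)$, which is the first branch because the $g\den_h$-labeling forces $c\le\gamma_g+|g\Excp_h(w)|\le\gamma_g+g\exc_\ell(w)+\delta_\ell$.

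Case $3$ ($c>0$, $y\notin g\Excp_h(w)$) is where both branches occur. Exactly as in Case $2$, every slot other than $y$ keeps its status: the reshuffled non-$g$-gap-$h$-level excedance letters are uncounted, the untouched $g$-gap-$h$-level excedance letters keep their status, and slot $m'+a$ is uncounted; and since $\alpha_y$ is itself a non-$g$-gap-$h$-level excedance letter (hence uncounted in $w$) while slot $y$ carries $n$ in $u$ with $x^u_y=x^w_y$, we get $g\exc_\ell(u)=g\exc_\ell(w)+\varepsilon$, where $\varepsilon=1$ iff $n\ge x^w_y+g$ and $x^w_y\ge\ell$, i.e.\ iff $\delta_\ell<y\le m'-\gamma_g$. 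It remains to read off $c$ from the $g\den_h$-labeling: if $y>m'-\gamma_g$ then $c=m'+1-y\le\gamma_g\le g\exc_\ell(w)+\delta_\ell+\gamma_g$, consistent with $\varepsilon=0$; if $y\le m'-\gamma_g$ then $c=\gamma_g+|g\Excp_h(w)|+j(y)$ with $j(y)=|\{\,i\le y:i\notin g\Excp_h(w)\,\}|$, and the displayed identity gives $\gamma_g+|g\Excp_h(w)|+j(\delta_\ell)=\gamma_g+g\exc_\ell(w)+\delta_\ell$, so, $j$ being nondecreasing and increasing by at least $1$ whenever $y$ passes a non-$g$-gap-$h$-level excedance index, we get $c>g\exc_\ell(w)+\delta_\ell+\gamma_g$ exactly when $j(y)>j(\delta_\ell)$, which for $y\le m'-\gamma_g$ holds exactly when $y>\delta_\ell$ — consistent with $\varepsilon=1$. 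This proves \eqref{eq-gexc_l}.

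The step I expect to be the main obstacle is the slot-by-slot bookkeeping in Case $2$ (repeated in Case $3$): one must see that the entire bundle of rightward moves — the cyclic rotation of $\alpha_{i_1},\dots,\alpha_{i_k}$, the demotion of $\alpha_{i_k}$ into the non-excedance part, the rotation of the non-excedance letters, and the insertion of $n$ — leaves $g\exc_\ell$ unchanged, which forces one to invoke the precise definitions of $k$, $p$ and $j_1$ (the same inequalities already used in the proof of Lemma~\ref{lem-phi-den1}) together with $h\le g+\ell$ at several points. Once that is in hand, the passage in Case $3$ from the label $c$ to the slot $y$ is a routine computation with the $g\den_h$-labeling.
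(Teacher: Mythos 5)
Your proof is correct and follows essentially the same route as the paper's: the key point in both is that, because $h\le g+\ell$, the statistic $g\exc_\ell$ counts exactly those $g$-gap $h$-level excedance places $i$ with $x_i\ge\ell$ (equivalently $i>\delta_\ell$), after which one only has to track how $g\Excp_h$ changes and where the possible new excedance place $y$ sits relative to $\delta_\ell$ and $m'-\gamma_g$, read off from the $g\den_h$-labeling. The only difference is stylistic: you re-verify slot by slot that the counted statuses are preserved, whereas the paper simply cites the already-established identity \eqref{eq-Excp_r} from Lemma~\ref{lem-phi-den1} and concludes in three short cases on $c$.
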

 \begin{proof}
 	Let $u=\beta_1\beta_2\ldots\beta_{m'+a}=\phi_{n,g,h}(w,c)$.  Assume that $w$ has exactly $s$ $g$-gap $h$-level excedance places.
 	Choose the integer $y$ such that    the space before  the letter  $\alpha_y$ is labeled by $c$  under the   $g\den_h$-labeling  of $w$ when $c>0$,  and let $y=m'+1$ when $c=0$.   Let $p$ be the smallest  integer such that $x_p=\ell$.   
 	Recall that $$g\exc_\ell(w)=|\{i\mid \alpha_i\geq x_i+g, x_i\geq \ell\}|,$$  
 	$$g\exc_\ell(u)=|\{i\mid \beta_i\geq x_i+g, x_i\geq \ell\}|, $$
 	$$
 	g\Excp_{h}(w)=\{i
 	\mid \alpha_i\geq x_i+g, \alpha_i\geq h\},
 	$$
 	and 
 	$$
 	g\Excp_{h}(u)=\{i
 	\mid \beta_i\geq x_i+g, \beta_i\geq h\}. 
 	$$
  Since $h\leq g+\ell$, one can easily check that    the number of $g$-gap $h$-level excedence letters occurring weakly  to the right of $\alpha_p$ (resp. $\beta_p$) in $u$ (resp. $w$) is given by $g\exc_\ell(w)$ (resp. $g\exc_\ell(u)$).

 Now we proceed to prove (\ref{eq-gexc_l}) by distinguishing the following cases. 
 
 \noindent{\bf Case 1:} $c\leq s+\gamma_g$.\\
 	From  (\ref{eq-Excp_r}), it follows that  $g\Excp_h(w)= g\Excp_h(u)$, implying that  $g\exc_\ell(w)= g\exc_\ell(u)$.
 	
 	 \noindent{\bf Case 2:} $s+\gamma_g< c \leq  g\exc_\ell(w)+\delta_\ell+\gamma_g$.\\ 
 	 By the rules specified in the $g\den_h$-labeling of $w$, we must have  $y<p$. By (\ref{eq-Excp_r}), we have $g\Excp_h(u)= g\Excp_h(w)\cup\{y\}$. Since $y<p$,   it follows that $g\exc_\ell(w)= g\exc_\ell(u)$ as desired.

 	 \noindent{\bf Case 3:} $  c >  g\exc_\ell(w)+\delta_\ell+\gamma_g$.\\ 
 		 By the rules specified in the $g\den_h$-labeling of $w$, we must have  $y\geq p$. Again by (\ref{eq-Excp_r}), we have $g\Excp_h(u)= g\Excp_h(w)\cup\{y\}$. Since $y\geq p$,   it follows that $g\exc_\ell(u)= g\exc_\ell(w)+1$ as desired, completing the proof. 
 	\end{proof}
  \begin{framed}
 	\begin{center}
 		{\bf The map $\psi_{n,g,h}:\mathfrak{S}_{N} \longrightarrow \mathfrak{S}_{N'}\times \{0,1,\ldots, m'\} $}
 	\end{center}
 	Let    $u=\beta_1\beta_2\ldots \beta_{m'+a}$ and $\bar{u}=x_1x_2\ldots x_{m'+a}$.  
 	 Assume that $\beta_y$ is the leftmost $n$ of $u$.  If $y=m'+1$, define $\psi_{n,g,h}(u)=(w,0)$ where $w$ is obtained from $u$ by removing $\beta_{y}$.  Otherwise,  
 	 find all the non-$g$-gap-$h$-level excedance letters  occurring to the right of $\beta_y$, say $\beta_{j_1}, \beta_{j_2}, \ldots, \beta_{j_a}$ with $j_1<j_2<\cdots<j_a$.  Find the greatest integer $\ell$  satisfying  $\beta_{j_\ell}\geq x_{j_{\ell-1}}+g$ and $\beta_{j_{\ell}}\geq h$ with the convention that $j_0=y$. If such $\ell$ does not exist, set $\ell=0$.  
 	 Generate a multipermutation $w$   from $u$ by distinguishing the following two cases. 
 	 \begin{itemize}
 		 
 		 \item Case 1: $\ell=0$\\
 		 Replace $\beta_{j_z}$ with $\beta_{j_{z+1}}$ for all $0\leq z<a$ with the convention that $j_0=y$. 
 		 
 		 \item Case 2: $\ell\geq 1$. 
 		  \begin{itemize}
 		 	\item    Choose the smallest  integer $p$ satisfying $x_p=\beta_{j_\ell}-g+1$.
 		 	\item Find all the $g$-gap $h$-level excedance letters  of $u$  located     between   $\beta_y$ and $\beta_{p}$ (including $\beta_y$),  say  $\beta_{i_1}, \beta_{i_2}, \ldots, \beta_{i_k}$ with    
 		 	 $ y=i_1<i_2<\ldots <i_k$.    
 		  
 		 	\item  Replace   $\beta_{i_{z}}$  with $\beta_{i_{z+1}}$  for all $1\leq z\leq k$ with the convention that $i_{k+1}=j_\ell$.
 		 	
 		 	\item   Replace  $\beta_{j_{z}}$ with $\beta_{j_{z+1}}$ for all $\ell\leq z<a $. 
 		 \end{itemize}

 	\end{itemize}
 	Define $\psi_{n,g,h}(u)=(w,c)$, where  $c=g\den_{h}(u)-g\den_{h}(w)$. 
 	
 \end{framed}

Take  $N=\{1^3, 2^3, 3, 4, 5^2, 6^2, 7^{2}\}$,    $u=51217627432165$ and $g=h=3$ for example.  
Clearly, $\beta_5$ is the leftmost $7$ of $u$, and hence we have $y=5$. 
It is not difficult to see that all  non-$3$-gap-$3$-level excedance letters occurring  to the right of $\beta_y$ are given by $\beta_{j_1}, \beta_{j_2},  \ldots, \beta_{j_7} $ with $j_1=7, j_2=9, j_3=10$,  $  j_4=11$, $j_5=12$, $j_6=13$ and $j_7=14$.  Clearly, we have $\beta_{j_z}\leq x_{j_{z-1}}+2$ for all $1\leq z\leq 7$ with the convention $j_0=y=5$.  This implies that $\ell=0$. Then we obtain a multipermutation $w=\alpha_1\alpha_2\ldots \alpha_{13}$  from $u$ by     replacing $\beta_{j_z}$ with $\beta_{j_{z+1}}$ for all $0\leq z\leq 6$ as demonstrated in Figure \ref{case1-psi}. Then we have $c=3\den_3(u)-3\den_3(w)=11$. 
\begin{figure}
	\begin{center}
 
		\begin{tikzpicture}[scale=1]
		
		\draw[lightgray, thin] (1,0) -- (1, -2.2); 
		\draw[lightgray, thin] (0,-0.5) -- (15,-0.5); 
		
		\node at (0.5, 0.25) {};
		\node at (1.5, 0.25) {};
		\node at (2.5, 0.25) {};
		\node at (3.5, 0.25) {};
		\node at (4.5, 0.25) {};
		\node at (5.5, 0.25) {};
		\node at (6.5, 0.25) {};
		\node[red] at (7.5, 0.25) {$j_{1}$};
		\node at (8.5, 0.25) {};
		\node[red] at (9.5, 0.25) {$j_{2}$};
		\node[red] at (10.5, 0.25) {$j_{3}$};
		\node[red] at (11.5, 0.25) {$j_{4}$};
		\node[red] at (12.5, 0.25) {$j_{5}$};
		\node[red] at (13.5, 0.25) {$j_{6}$};
		\node[red] at (14.5, 0.25) {$j_{7}$};
		
		\node at (0.5, -0.25) {$i$};
		\node at (1.5, -0.25) {1};
		\node at (2.5, -0.25) {2};
		\node at (3.5, -0.25) {3};
		\node at (4.5, -0.25) {4};
		\node at (5.5, -0.25) {5};
		\node at (6.5, -0.25) {6};
		\node at (7.5, -0.25) {7};
		\node at (8.5, -0.25) {8};
		\node at (9.5, -0.25) {9};
		\node at (10.5, -0.25) {10};
		\node at (11.5, -0.25) {11};
		\node at (12.5, -0.25) {12};
		\node at (13.5, -0.25) {13};
		\node at (14.5, -0.25) {14};
		
		\node at (0.5, -0.75) {$x_i$};
		\node at (1.5, -0.75) {1};
		\node at (2.5, -0.75) {1};
		\node at (3.5, -0.75) {1};
		\node at (4.5, -0.75) {2};
		\node at (5.5, -0.75) {2};
		\node at (6.5, -0.75) {2};
		\node at (7.5, -0.75) {3};
		\node at (8.5, -0.75) {4};
		\node at (9.5, -0.75) {5};
		\node at (10.5, -0.75) {5};
		\node at (11.5, -0.75) {6};
		\node at (12.5, -0.75) {6};
		\node at (13.5, -0.75) {7};
		\node at (14.5, -0.75) {7};
		
		\node at (0.5, -1.2) {$\beta_i$};
		\node at (1.5, -1.2) {5};  
		\node at (2.5, -1.2) {1};
		\node at (3.5, -1.2) {2};
		\node at (4.5, -1.2) {1};
		\node[red] at (5.5, -1.2) {7};
		\node at (6.5, -1.2) {6};  
		\node[red] at (7.5, -1.2) {2};  
		\node at (8.5, -1.2) {7};
		\node[red] at (9.5, -1.2) {4};  
		\node[red] at (10.5, -1.2) {3}; 
		\node[red] at (11.5, -1.2) {2}; 
		\node[red] at (12.5, -1.2) {1}; 
		\node[red] at (13.5, -1.2) {6};
		\node[red] at (14.5, -1.2) {5};
		
		\begin{scope}[blue, line width=0.5pt, -{Latex[bend]}, shorten >=3pt, shorten <=3pt]
			\draw[blue] (14.5, -1.45) to[bend left=10] (13.5, -1.45);
			\draw[blue] (13.5, -1.45) to[bend left=10] (12.5, -1.45);
			
			\draw[blue] (12.5, -1.45) to[bend left=10] (11.5, -1.45);
			
			\draw[blue] (11.5, -1.45) to[bend left=10] (10.5, -1.45);
			
			\draw[blue] (10.5, -1.45) to[bend left=10] (9.5, -1.45);
			
			\draw[blue] (9.5, -1.45) to[bend left=10] (7.5, -1.45);
			\draw[blue] (7.5, -1.45) to[bend left=10] (5.5, -1.45);

		\end{scope}
		
		\node at (0.5, -1.9) {$\alpha_i$};
		\node at (1.5, -1.9) {5};
		\node at (2.5, -1.9) {1};
		\node at (3.5, -1.9) {2};
		\node at (4.5, -1.9) {1};
		\node at (5.5, -1.9) {2};
		\node at (6.5, -1.9) {6};
		\node at (7.5, -1.9) {4};
		\node at (8.5, -1.9) {7};
		\node at (9.5, -1.9) {3};
		\node at (10.5, -1.9) {2};
		\node at (11.5, -1.9) {1};
		\node at (12.5, -1.9) {6};
		\node at (13.5, -1.9) {5};
		\node at (14.5, -1.9) {};
		
	\end{tikzpicture}
\end{center}
\caption{An example of Case 1 of the map $\psi_{n,g,h}$.} \label{case1-psi}
\end{figure}

 Take  $N=\{1^3, 2^3, 3, 4, 5^2, 6^2, 7^{2}\}$,    $u=71212547632165$ and $g=h=3$ for example. 
 Clearly, $\beta_1$ is the leftmost $7$ of $u$, and hence we have $y=1$. 
 It is not difficult to see that all  non-$3$-gap-$3$-level excedance letters occurring  to the right of $\beta_y$ are given by $\beta_{j_1}, \beta_{j_2},  \ldots, \beta_{j_{11}} $ with $j_1=2, j_2=3, j_3=4$,  $  j_4=5$, $j_5=7$, $j_6=9$,  $j_7=10$, $j_8=11$, $j_9=12$, $j_{10}=13$ and $j_{11}=14$ (see Figure \ref{case2-psi} for an illustration).  Clearly,  $\ell=6$ is the greatest integer satisfying  $\beta_{j_\ell}\geq x_{j_{\ell-1}}+3$ and $\beta_{j_\ell}\geq 3$. 
 Then   	 $p=8$ is the smallest  integer satisfying $x_p=\beta_{j_{\ell}}-2=4$. 
 Moreover, one can easily check that all the $3$-gap $3$-level excedance letters lying between $\beta_{y}$ and $\beta_{p}$ (including $\beta_y$) are given by $\beta_{i_1}$ and  $ \beta_{i_2}$  with  $i_1=1$ and $i_2=6$.    
Then generate a multipermutation $w=\alpha_1\alpha_2\ldots \alpha_{13}$  from $u$  by replacing $\beta_y=\beta_1$ with $\beta_{i_2}=\beta_{6}$, replacing $\beta_{i_2}=\beta_{6}$ with $\beta_{j_6}=\beta_{9}$,   and replacing $\beta_{j_z}$ with $\beta_{j_{z+1}}$ for all $6\leq z\leq 10$ as illustrated in Figure \ref{case2-psi}. 
Then we have $c=3\den_3(u)-3\den_3(w)=7$. 
\begin{figure}
		\begin{center}
	 
		\begin{tikzpicture}[scale=1]
		
		\draw[lightgray, thin] (1,0) -- (1, -2.2); 
		\draw[lightgray, thin] (0,-0.5) -- (15,-0.5); 
		
		\node at (0.5, 0.25) {};
		\node[red] at (1.5, 0.25) {$i_{1}$};
		\node[red] at (2.5, 0.25) {$j_{1}$};
		\node[red] at (3.5, 0.25) {$j_{2}$};
		\node[red] at (4.5, 0.25) {$j_{3}$};
		\node[red] at (5.5, 0.25) {$j_{4}$};
		\node[red] at (6.5, 0.25) {$i_{2}$};
		\node[red] at (7.5, 0.25) {$j_{5}$};
		\node[red] at (8.5, 0.25) {$p$};
		\node[red] at (9.5, 0.25) {$j_{6}$};
		\node[red] at (10.5, 0.25) {$j_{7}$};
		\node[red] at (11.5, 0.25) {$j_{8}$};
		\node[red] at (12.5, 0.25) {$j_{9}$};
		\node[red] at (13.5, 0.25) {$j_{10}$};
		\node[red] at (14.5, 0.25) {$j_{11}$};

		\node at (0.5, -0.25) {$i$};
		\node at (1.5, -0.25) {1};
		\node at (2.5, -0.25) {2};
		\node at (3.5, -0.25) {3};
		\node at (4.5, -0.25) {4};
		\node at (5.5, -0.25) {5};
		\node at (6.5, -0.25) {6};
		\node at (7.5, -0.25) {7};
		\node at (8.5, -0.25) {8};
		\node at (9.5, -0.25) {9};
		\node at (10.5, -0.25) {10};
		\node at (11.5, -0.25) {11};
		\node at (12.5, -0.25) {12};
		\node at (13.5, -0.25) {13};
		\node at (14.5, -0.25) {14};
		
		\node at (0.5, -0.75) {$x_i$};
		\node at (1.5, -0.75) {1};
		\node at (2.5, -0.75) {1};
		\node at (3.5, -0.75) {1};
		\node at (4.5, -0.75) {2};
		\node at (5.5, -0.75) {2};
		\node at (6.5, -0.75) {2};
		\node at (7.5, -0.75) {3};
		\node at (8.5, -0.75) {4};
		\node at (9.5, -0.75) {5};
		\node at (10.5, -0.75) {5};
		\node at (11.5, -0.75) {6};
		\node at (12.5, -0.75) {6};
		\node at (13.5, -0.75) {7};
		\node at (14.5, -0.75) {7};
		
		\node at (0.5, -1.2) {$\beta_i$};
		\node[red] at (1.5, -1.2) {7};  
		\node at (2.5, -1.2) {1};
		\node at (3.5, -1.2) {2};
		\node at (4.5, -1.2) {1};
		\node at (5.5, -1.2) {2};
		\node[red] at (6.5, -1.2) {5};  
		\node at (7.5, -1.2) {4};  
		\node at (8.5, -1.2) {7};
		\node[red] at (9.5, -1.2) {6};  
		\node[red] at (10.5, -1.2) {3}; 
		\node[red] at (11.5, -1.2) {2}; 
		\node[red] at (12.5, -1.2) {1}; 
		\node[red] at (13.5, -1.2) {6};
		\node[red] at (14.5, -1.2) {5};
		
		\begin{scope}[blue, line width=0.5pt, -{Latex[bend]}, shorten >=3pt, shorten <=3pt]
			\draw[blue] (14.5, -1.45) to[bend left=10] (13.5, -1.45);
			\draw[blue] (13.5, -1.45) to[bend left=10] (12.5, -1.45);
			
			\draw[blue] (12.5, -1.45) to[bend left=10] (11.5, -1.45);
			
			\draw[blue] (11.5, -1.45) to[bend left=10] (10.5, -1.45);
			
			\draw[blue] (10.5, -1.45) to[bend left=10] (9.5, -1.45);
			
			\draw[blue] (9.5, -1.45) to[bend left=10] (6.5, -1.45);
			\draw[blue] (6.5, -1.45) to[bend left=10] (1.5, -1.45);

		\end{scope}
		
		\node at (0.5, -1.9) {$\alpha_i$};
		\node at (1.5, -1.9) {5};
		\node at (2.5, -1.9) {1};
		\node at (3.5, -1.9) {2};
		\node at (4.5, -1.9) {1};
		\node at (5.5, -1.9) {2};
		\node at (6.5, -1.9) {6};
		\node at (7.5, -1.9) {4};
		\node at (8.5, -1.9) {7};
		\node at (9.5, -1.9) {3};
		\node at (10.5, -1.9) {2};
		\node at (11.5, -1.9) {1};
		\node at (12.5, -1.9) {6};
		\node at (13.5, -1.9) {5};
		\node at (14.5, -1.9) {};
		
	\end{tikzpicture}
\end{center}
	\caption{An example of Case 2 of the map $\psi_{n,g,h}$.} \label{case2-psi}
\end{figure}

Now we proceed to  prove that the map   $\psi_{n,g,\ell}$ verifies the following properties. 
   
 \begin{lemma}\label{lem-psi-den1}
 	Let $(w,c)\in \mathfrak{S}_{N'}\times\{ 0,1,\ldots, m'\}$ with $w=\alpha_1\alpha_2\ldots \alpha_{m'+a-1}$.  Choose the integer $y$ such that    the space before  the letter  $\alpha_y$ is labeled by $c$  under the   $g\den_h$-labeling  of $w$ when $c>0$,  and let $y=m'+1$ when $c=0$.   If   $\alpha_j\neq n$ for all $j<y$, then we have $\psi_{n,g,h}(\phi_{n,g,h}(w, c))=(w,c)$.  
 	\end{lemma}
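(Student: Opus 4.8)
The plan is to verify the identity $\psi_{n,g,h}(\phi_{n,g,h}(w,c))=(w,c)$ by following the three cases in the definition of $\phi_{n,g,h}$ and checking at each step that $\psi_{n,g,h}$ reconstructs exactly the auxiliary data that $\phi_{n,g,h}$ used. Write $u=\phi_{n,g,h}(w,c)$. The first and crucial observation is that, under the hypothesis $\alpha_j\neq n$ for all $j<y$, the leftmost occurrence of $n$ in $u$ is at position $y$ (with the convention $y=m'+1$ when $c=0$): in Case 1 the inserted $n$ sits immediately after $\alpha_{m'}$ and $y=m'+1$; in Cases 2 and 3 the only newly created $n$ replaces $\alpha_y$, while every other replacement performed by $\phi_{n,g,h}$ merely moves letters already present among positions $\ge y$, none of which equals $n$ before position $y$. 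Hence the index ``$y$'' selected by $\psi_{n,g,h}$ (the position of the leftmost $n$) coincides with the index ``$y$'' selected by $\phi_{n,g,h}$, and $\bar{u}$ is obtained from $\bar{w}$ by inserting one further $n$ at the end, so for values $v<n$ the smallest index with $x$-value $v$ is the same in $\bar{u}$ and $\bar{w}$.

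Next I would match the cases. If $c=0$, then $\beta_{m'+1}=n$, so $\psi_{n,g,h}$ falls into its $y=m'+1$ branch and returns $(w',0)$, where $w'$ is obtained from $u$ by deleting that $n$; since $u$ was built from $w$ by inserting exactly this $n$, we get $w'=w$. If $c>0$ and $y\notin g\Excp_h(w)$, I would identify the list of non-$g$-gap-$h$-level excedance letters of $u$ lying to the right of $\beta_y$ with the shifted copies of $\alpha_y,\alpha_{j_1},\ldots,\alpha_{j_b}$ (these remain non-excedance letters, using that $\phi_{n,g,h}$ preserves the non-excedance subword as in the proof of Lemma \ref{lem-phi-den1}), check that no index triggers the excedance test in the definition of $\psi_{n,g,h}$ so that $\psi_{n,g,h}$ enters its $\ell=0$ branch, and observe that the resulting left shift is exactly inverse to the right shift carried out by $\phi_{n,g,h}$, recovering $w$. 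If $c>0$ and $y\in g\Excp_h(w)$, which is the main case, I would show that $\psi_{n,g,h}$ selects the greatest index $\ell$ so that the corresponding non-excedance letter of $u$ is precisely $\alpha_{i_k}$ (the letter demoted to non-excedance status by $\phi_{n,g,h}$), then recovers the same integer $p$ via $x_p=\alpha_{i_k}-g+1$, the same chain $\beta_{i_1},\ldots,\beta_{i_k}$ of $g$-gap $h$-level excedance letters between positions $y$ and $p$, and that the two chains of replacements in $\psi_{n,g,h}$ undo those in $\phi_{n,g,h}$. The structural input here is exactly the analysis of the interval between $\alpha_{i_k}$ and $\alpha_p$ already carried out in the proof of Lemma \ref{lem-phi-den1} (Claims~1--3 there: no $g$-gap $h$-level excedance place lies strictly between, and every letter there is smaller than $\alpha_{i_k}$), which forces both the correct detection of $p$ and the maximality of $\ell$.

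Finally, once $w$ has been recovered, the label is automatic: $\psi_{n,g,h}$ outputs $c'=g\den_h(u)-g\den_h(w)$, and equation \eqref{eq-den_r} of Lemma \ref{lem-phi-den1}, which applies because $\alpha_j\neq n$ for all $j<y$, gives $g\den_h(u)=g\den_h(w)+c$, so $c'=c$. I expect the main obstacle to be the case $y\in g\Excp_h(w)$: verifying that the greatest $\ell$ chosen by $\psi_{n,g,h}$ lands exactly on the position holding $\alpha_{i_k}$ --- equivalently, that none of the non-excedance letters further to the right can re-trigger the $g$-gap $h$-level excedance condition --- and that the reconstructed $p$ agrees with the one chosen by $\phi_{n,g,h}$; both rely on the fine structure of the interval $(i_k,p)$ established in the proof of Lemma \ref{lem-phi-den1}.
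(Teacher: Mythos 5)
Your proposal follows essentially the same route as the paper's proof: identify the leftmost $n$ of $u$ with position $y$, split into the three cases of $\phi_{n,g,h}$, show in the non-excedance case that $\psi_{n,g,h}$ enters its $\ell=0$ branch and in the excedance case that the greatest $\ell$ lands on the position holding $\alpha_{i_k}$ with the same $p$ recovered, and then obtain $c'=c$ from equation~\eqref{eq-den_r}. The key verifications you flag (maximality of $\ell$ and agreement of $p$, resting on the structure of the interval between $i_k$ and $p$) are exactly the ones the paper carries out, so the plan is correct and matches the published argument.
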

 \begin{proof}  
 	We have three cases.\\
 		\noindent{\bf Case 1:} $c=0$.\\
 	By  the definitions of $\phi_{n,g,h}$ and $\psi_{n,g,h}$, one can easily check that $\psi_{n,g,h}(\phi_{n,g,h}(w, 0))=(w,0). $
 	\noindent{\bf Case 2:}  $c>0$ and $y\notin g\Excp_h(w)$.\\
 	Recall that $u=\beta_1\beta_2\cdots \beta_{m'+a}=\phi_{n,g,h}(w,c)$ is constructed  as follows.
 	First, find all the  non-$g$-gap-$h$-level excedance letters of $w$ lying    weakly to  the right  of $\alpha_{y}$, say $\alpha_{j_1}, \alpha_{j_2}, \ldots, \alpha_{j_b}$ with $ y=j_1<j_2<\cdots<j_b$.  Then  replace $\alpha_{j_1}$ with an  $n$,   and replace  $\alpha_{j_{z+1}}$ with $\alpha_{j_{z}}$ for all $1\leq z\leq b $ with the convention that $j_{b+1}=m'+a$. 
 	It is easily seen that $\beta_{y}=n$ is the leftmost $n$ in $u$ and   the non-$g$-gap-$h$-level excedance letters  lying to the right of $\beta_{y}$ are  given by $\beta_{j_2}, \beta_{j_3}, \ldots, \beta_{j_{b+1}}$. Clearly, for all $2\leq z\leq b+1$, we have  either $\beta_{j_z}=\alpha_{j_{z-1}}< x_{j_{z-1}}+g$ or $\beta_{j_z}=\alpha_{j_{z-1}}<h$.    
 	Then by the definition of $\psi_{n,g,h}$,  we have $\psi_{n,g,h}(u)=(w', c')$ where $w'$ is obtained from  $u$ by replacing $\beta_{j_z}$ with $\beta_{j_{z+1}}$ for all $1\leq z\leq b$ and $c'=g\den_{h}(u)-g\den_{h}(w')$. 
 	By (\ref{eq-den_r}), we have $g\den_h(u)=g\den_h(w)+c$. In order to prove that $\psi_{n,g,h}(\phi_{n,g,h}(w,c))=(w,c)$, it suffices to show that $w=w'$. This can be justified by the fact that 
  $\beta_{j_{z+1}}=\alpha_{j_z}$ for all $1\leq z\leq b$.

 		\noindent{\bf Case 3:}   $c>0$ and $y\in g\Excp_h(w)$.\\
 		Assume that  $w$ has exactly  $s$ $g$-gap $h$-level excedance letters. By the rules specified in the $g\den_h$-labeling of $w$,   we have $c\leq s+\gamma_g$.   By (\ref{eq-Excp_r}), we have $g\Excp_h(w)=g\Excp_h(u)$. 
 		
 	 Recall that   $u$ is obtained  from $w$ by carrying out  the following procedure.
 	\begin{itemize}
 	\item  Find all the $g$-gap $h$-level excedance letters   of $w$  occurring      weakly to the right of   $\alpha_y$,  say
 	$\alpha_{i_1}, \alpha_{i_2}, \ldots, \alpha_{i_a}$ with
 	$ y=i_1<i_2<\cdots <i_a$. Find the smallest  integer $k$ satisfying $\alpha_{i_k}< x_{i_{k+1}}+g$   with the convention that $x_{i_{a+1}}=n$.
 	\item    
 	Choose the smallest integer  $p$ such that $x_p=\alpha_{i_k}-g+1$.	Find all the  non-$g$-gap-$h$-level excedance letters of $w$ that occur   weakly   to  the right  of $\alpha_{p}$, say $\alpha_{j_1}, \alpha_{j_2}, \ldots, \alpha_{j_b}$ with $ j_1<j_2<\cdots<j_b$.    
 	\item   Replace  $\alpha_{i_1}=\alpha_y$ with an  $n$ and replace   $\alpha_{i_{z}}$  with $\alpha_{i_{z-1}}$  for all $1< z\leq k$. 
 	
 	\item  Replace  $\alpha_{j_1}$ with $\alpha_{i_k}$,  and  replace  $\alpha_{j_{z+1}}$ with $\alpha_{j_{z}}$ for all $1\leq z\leq b $  with the convention that $j_{b+1}=m'+a$.   
 \end{itemize}
 	It is easy to check that $\beta_{y}=n$ is the leftmost $n$ in $u$.  Since $g\Excp_h(w)=g\Excp_h(u)$,  the non-$g$-gap-$h$-level excedance letters that are located weakly to the  right of $\beta_p$ are given by $\beta_{j_1}, \beta_{j_2}, \ldots, \beta_{j_{b+1}}$.    Recall that $p$ is the smallest integer such that $x_p=\alpha_{i_k}-g+1=\beta_{j_1}-g+1$.  Moreover, we have  either $\beta_{j_{z+1}}=\alpha_{j_z}< x_{j_{z}}+g$ or $\beta_{j_{z+1}}=\alpha_{j_z}<h$ for all $1\leq z\leq b$.
 	Moreover, for any $q<p$, we have $\beta_{j_1}=\alpha_{i_k}\geq x_q+g$ and $\beta_{j_1}\geq h$.   Since $g\Excp_h(w)=g\Excp_h(u)$, it follows that   all the $g$-gap $h$-level excedance letters located between $\beta_{y}$ and $\beta_p$ (including $\beta_y$) are given by $\beta_{i_1}, \beta_{i_2}, \ldots, \beta_{i_k}$.  Let $\psi_{n,g,h}(u)=(w', c')$, where $c'=g\den_h(u)-g\den_h(w')$ and $w'$ is obtained form $u$ by carrying out the following procedures. 
 	 	\begin{itemize}
 	 	\item  Replace   $\beta_{i_{z}}$  with $\beta_{i_{z+1}}$  for all $1\leq z< k$.
 	 	\item Replace $\beta_{i_k}$ with $\beta_{j_1}$.
 	 	\item   Replace  $\beta_{j_{z}}$ with $\beta_{j_{z+1}}$ for all $1\leq z\leq b $. 
 	 \end{itemize}
 	Again by the definition of $\phi_{n,g,h}$, we have $\beta_{i_{z+1}}=\alpha_{i_z}$ for all $1\leq z\leq k-1$,  $\beta_{j_{1}}=\alpha_{i_k}$, and $\beta_{j_{z+1}}=\alpha_{j_z}$ for all $1\leq z\leq b$. This ensures that $w'=w$. 
 		By (\ref{eq-den_r}), we have $g\den_h(u)=g\den_h(w)+c$.  Therefore, we deduce that  $\psi_{n,g,h}(\phi_{n,g,h}(w, c))=(w', c')=(w, c)$ as desired, completing  the proof. 
  \end{proof}

 \subsection{Finishing the proof of Theorem \ref{th-Phi-den}}
 Now we are in  the position to establish the map $\Phi^{\den}_{g,h}$.

  \begin{framed}
 	\begin{center}
 		{\bf The map $\Phi^{\den}_{g,h}:\mathfrak{S}_{M'}\times \mathcal{P}(k_n, m') \longrightarrow \mathfrak{S}_{M}$ }
 	\end{center}
 	Given  $(w, \lambda)\in \mathfrak{S}_{M'}\times \mathcal{P}(k_n, m')  $, let $\lambda^{(0)}=\lambda=(\lambda_1, \lambda_2, \ldots, \lambda_{k_n})$ and  $w^{(0)}=w$. Set $b=0$ and carry out the following procedure. 
 	 	
 		(A) Define 
 		$$T^{(b)}=(a^{(b)}_{1}, a^{(b)}_2, \ldots,  a^{(b)}_{m'+1})$$ where $a^{(b)}_{i}$  is label of the place immediately before the $i$-th letter of $w^{(b)}$ (counting from left to right) under the $g\den_h$-labeling of $w^{(i)}$ for all $1\leq i\leq m'$ and  $a^{(b)}_{m'+1}$ is the label of the place immediately after   the $m'$-th letter    of $w^{(b)}$ under the $g\den_h$-labeling of $w^{(b)}$. 
Let $y_b$ be the greatest integer such that $a^{(b)}_{y_b}\in \lambda^{(b)}$. 
 	Let $w^{(b+1)}=\phi_{n,g,h}(w^{(b)}, a^{(b)}_{y_b})$ and let $\lambda^{(b+1)}=\lambda^{(b)}\setminus \{a^{(b)}_{y_b}\}$.  
 	
 	(B)  Replace $b$ by $b+1$. If $b=k_n$, then we stop. Otherwise, we go back to (A). 
 	
 	Define $\Phi^{\den}_{g,h}(w, \lambda)=w^{(k_n)}$.

 \end{framed}

 For example, let $M=\{1^3, 2^3, 3^2, 4^2, 5^4\}$ and $w=3224143121$. Take $g=2$, $h=3$, and $\lambda=(9,9,5,4)$, we will generate $w^{(4)}=32245551543121=\Phi^{\den}_{2,3}(w, \lambda)$ as follows.
  	
	\begin{table}[h]
	\centering
	\begin{minipage}{0.45\textwidth}
		\centering
		\vspace{0.3em}
		\fontsize{11}{9}\selectfont  
		\renewcommand{\arraystretch}{1.5}  
		\setlength{\tabcolsep}{0.52em} 
		\begin{tabular}{c|cccccccccccc}
			$i$ & 1 & 2 & 3 & 4 & 5 & 6 & 7 & 8 & 9 & 10  \\
			\hline
			\multirow{2}{*}{$\begin{array}{c} x_i \\ \alpha_i \end{array}$} 
			& \makebox[0.7em][r]{1} & \makebox[0.7em][r]{1} & \makebox[0.7em][r]{1} & \makebox[0.7em][r]{2} & \makebox[0.7em][r]{2} & \makebox[0.7em][r]{2} & \makebox[0.7em][r]{3} & \makebox[0.7em][r]{3} & \makebox[0.7em][r]{4} & \makebox[0.7em][r]{4}  \\
			& \makebox[0.7em][r]{$_{\green{5}}\red{3}$} 
			& \makebox[0.7em][r]{$_{\green{6}}2$} 
			& \makebox[0.7em][r]{$_{\green{7}}2$} 
			& \makebox[0.7em][r]{$_{\green{4}}\red{4}$} 
			& \makebox[0.7em][r]{$_{\green{8}}1$} 
			& \makebox[0.7em][r]{$_{\green{3}}\red{4}$} 
			& \makebox[0.7em][r]{$_{\green{9}}3$} 
			& \makebox[0.7em][r]{$_{\green{10}}1$} 
			& \makebox[0.7em][r]{$_{\green{2}}2$} 
			& \makebox[0.7em][r]{$_{\green{1}}1$}
			& \makebox[0em][r]{$_{\green{0}}$} \\
		\end{tabular}
		\\[0.5em] 
		{\small $w^{(0)}$} 
	\end{minipage}
	\hfill
	\begin{minipage}{0.45\textwidth}
		\centering
		\vspace{0.3em}
		\fontsize{11}{9}\selectfont  
		\renewcommand{\arraystretch}{1.5}  
		\setlength{\tabcolsep}{0.4em} 
		\begin{tabular}{c|ccccccccccccc}
			$i$ & 1 & 2 & 3 & 4 & 5 & 6 & 7 & 8 & 9 & 10 & 11 \\
			\hline
			\multirow{2}{*}{$\begin{array}{c} x_i \\ \alpha_i \end{array}$} 
			& \makebox[0.7em][r]{1} & \makebox[0.7em][r]{1} & \makebox[0.7em][r]{1} & \makebox[0.7em][r]{2} & \makebox[0.7em][r]{2} & \makebox[0.7em][r]{2} & \makebox[0.7em][r]{3} & \makebox[0.7em][r]{3} & \makebox[0.7em][r]{4} & \makebox[0.7em][r]{4} & \makebox[0.7em][r]{5} \\
			& \makebox[0.7em][r]{$_{\green{6}}\red{3}$}
			& \makebox[0.7em][r]{$_{\green{7}}2$} 
			& \makebox[0.7em][r]{$_{\green{8}}2$} 
			& \makebox[0.7em][r]{$_{\green{5}}\red{4}$} 
			& \makebox[0.7em][r]{$_{\green{9}}1$} 
			& \makebox[0.7em][r]{$_{\green{4}}\red{4}$} 
			& \makebox[0.7em][r]{$_{\green{3}}\red{5}$} 
			& \makebox[0.7em][r]{$_{\green{10}}3$} 
			& \makebox[0.7em][r]{$_{\green{2}}1$} 
			& \makebox[0.7em][r]{$_{\green{1}}2$} 
			& \makebox[0.7em][r]{$_{\green{0}}1$}
			& \makebox[0em][r]{$_*$} \\
		\end{tabular}
		\\[0.5em] 
		{\small $w^{(1)}$} 
	\end{minipage}
\end{table}

\begin{table}[h]
	\centering
	\begin{minipage}{0.45\textwidth}
		\centering
		\vspace{0.3em}
		\fontsize{11}{9}\selectfont  
		\renewcommand{\arraystretch}{1.5}  
		\setlength{\tabcolsep}{0.38em} 
		\begin{tabular}{c|cccccccccccccc}
			$i$ & 1 & 2 & 3 & 4 & 5 & 6 & 7 & 8 & 9 & 10 &11&12 \\
			\hline
			\multirow{2}{*}{$\begin{array}{c} x_i \\ \alpha_i \end{array}$} 
			& \makebox[0.7em][r]{1} & \makebox[0.7em][r]{1} & \makebox[0.7em][r]{1} & \makebox[0.7em][r]{2} & \makebox[0.7em][r]{2} & \makebox[0.7em][r]{2} & \makebox[0.7em][r]{3} & \makebox[0.7em][r]{3} & \makebox[0.7em][r]{4} & \makebox[0.7em][r]{4} & \makebox[0.7em][r]{5} & \makebox[0.7em][r]{5} \\
			& \makebox[0.7em][r]{$_{\green{6}}\red{3}$}
			& \makebox[0.7em][r]{$_{\green{7}}2$} 
			& \makebox[0.7em][r]{$_{\green{8}}2$} 
			& \makebox[0.7em][r]{$_{\green{5}}\red{4}$} 
			& \makebox[0.7em][r]{$_{\green{9}}1$} 
			& \makebox[0.7em][r]{$_{\green{4}}\red{5}$} 
			& \makebox[0.7em][r]{$_{\green{3}}\red{5}$} 
			& \makebox[0.7em][r]{$_{\green{10}}4$} 
			& \makebox[0.7em][r]{$_{\green{2}}3$} 
			& \makebox[0.7em][r]{$_{\green{1}}1$}
			& \makebox[0.7em][r]{$_{\green{0}}2$}
			& \makebox[0.7em][r]{$_*1$}
			& \makebox[0em][r]{$_*$} \\
		\end{tabular}
		\\[0.5em] 
		{\small $w^{(2)}$} 
	\end{minipage}
	\hfill
	\begin{minipage}{0.45\textwidth}
		\centering
		\vspace{0.3em}
		\fontsize{11}{9}\selectfont  
		\renewcommand{\arraystretch}{1.5}  
		\setlength{\tabcolsep}{0.3em} 
		\begin{tabular}{c|ccccccccccccccc}
			$i$ & 1 & 2 & 3 & 4 & 5 & 6 & 7 & 8 & 9 & 10 &11&12&13 \\
			\hline
			\multirow{2}{*}{$\begin{array}{c} x_i \\ \alpha_i \end{array}$} 
			& \makebox[0.7em][r]{1} & \makebox[0.7em][r]{1} & \makebox[0.7em][r]{1} & \makebox[0.7em][r]{2} & \makebox[0.7em][r]{2} & \makebox[0.7em][r]{2} & \makebox[0.7em][r]{3} & \makebox[0.7em][r]{3} & \makebox[0.7em][r]{4} & \makebox[0.7em][r]{4} & \makebox[0.7em][r]{5} & \makebox[0.7em][r]{5} & \makebox[0.7em][r]{5}\\
			& \makebox[0.7em][r]{$_{\green{7}}\red{3}$}
			& \makebox[0.7em][r]{$_{\green{8}}2$} 
			& \makebox[0.7em][r]{$_{\green{9}}2$} 
			& \makebox[0.7em][r]{$_{\green{6}}\red{4}$} 
			& \makebox[0.7em][r]{$_{\green{5}}\red{5}$} 
			& \makebox[0.7em][r]{$_{\green{4}}\red{5}$} 
			& \makebox[0.7em][r]{$_{\green{3}}\red{5}$} 
			& \makebox[0.7em][r]{$_{\green{10}}1$} 
			& \makebox[0.7em][r]{$_{\green{2}}4$} 
			& \makebox[0.7em][r]{$_{\green{1}}3$}
			& \makebox[0.7em][r]{$_{\green{0}}1$}
			& \makebox[0.7em][r]{$_*2$} 
			& \makebox[0.7em][r]{$_*1$}
			& \makebox[0em][r]{$_*$} \\
		\end{tabular}
		\\[0.5em] 
		{\small $w^{(3)}$} 
	\end{minipage}
\end{table}

\begin{table}[h]
	\centering
	\begin{minipage}{0.5\textwidth}
		\centering
		\vspace{0.3em}
		\fontsize{11}{9}\selectfont  
		\renewcommand{\arraystretch}{1.5}  
		\setlength{\tabcolsep}{0.3em} 
		\begin{tabular}{c|cccccccccccccccc}
			$i$ & 1 & 2 & 3 & 4 & 5 & 6 & 7 & 8 & 9 & 10 &11&12&13&14 \\
			\hline
			\multirow{2}{*}{$\begin{array}{c} x_i \\ \alpha_i \end{array}$} 
			& \makebox[0.7em][r]{1} & \makebox[0.7em][r]{1} & \makebox[0.7em][r]{1} & \makebox[0.7em][r]{2} & \makebox[0.7em][r]{2} & \makebox[0.7em][r]{2} & \makebox[0.7em][r]{3} & \makebox[0.7em][r]{3} & \makebox[0.7em][r]{4} & \makebox[0.7em][r]{4} & \makebox[0.7em][r]{5} & \makebox[0.7em][r]{5} & \makebox[0.7em][r]{5} & \makebox[0.7em][r]{5}\\
			& \makebox[0.7em][r]{$\red{3}$}
			& \makebox[0.7em][r]{$2$} 
			& \makebox[0.7em][r]{$2$} 
			& \makebox[0.7em][r]{$\red{4}$} 
			& \makebox[0.7em][r]{$\red{5}$} 
			& \makebox[0.7em][r]{$\red{5}$} 
			& \makebox[0.7em][r]{$\red{5}$} 
			& \makebox[0.7em][r]{$1$} 
			& \makebox[0.7em][r]{$5$} 
			& \makebox[0.7em][r]{$4$}
			& \makebox[0.7em][r]{$3$}
			& \makebox[0.7em][r]{$1$} 
			& \makebox[0.7em][r]{$2$}
			& \makebox[0.7em][r]{$1$} \\
		\end{tabular}
		\\[0.5em] 
		{\small $w^{(4)}$} 
	\end{minipage}
	\hfill
	\begin{minipage}{0.44\textwidth}
		\centering
		\vspace{0.3em}
		\fontsize{11}{9}\selectfont  
		\renewcommand{\arraystretch}{1.5}  
		\setlength{\tabcolsep}{0.5em} 
		\begin{tabular}{|c|c|c|c|}
			\hline
			$b$ & $\lambda^{(b)}$ & $T^{(b)}$ & $y_b$ \\
			\hline
			0 & $(9,9,5,4)$ & $(5,6,7,4,8,3,9,10,2,1,0)$ & 7 \\
			\hline
			1 & $(9,5,4)$ & $(6,7,8,5,9,4,3,10,2,1,0)$ & 6 \\
			\hline
			2 & $(9,5)$ & $(6,7,8,5,9,4,3,10,2,1,0)$ & 5 \\
			\hline
			3 & $(5)$ & $(7,8,9,6,5,4,3,10,2,1,0)$ & 5 \\
			\hline
		\end{tabular}
		\\[0.5em]
	\end{minipage}
\end{table}

\begin{lemma}\label{lem-y_b}
	For all $0\leq b<k_n-1$, we have $y_{b+1}\leq y_{b}$. 
	\end{lemma}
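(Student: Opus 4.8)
The plan is to compare the $g\den_h$-labeling of $w^{(b)}$ with that of $w^{(b+1)}=\phi_{n,g,h}(w^{(b)},c)$, where $c=a^{(b)}_{y_b}$, and to show that the two labelings assign the \emph{same} label to every unstarred space lying weakly to the right of the $y_b$-th letter; the lemma will then follow from the maximality built into the choice of $y_b$. First I would pin down the shape of the $g\den_h$-labeling. Write $P^{(b)}=g\Excp_h(w^{(b)})$ and $s_b=|P^{(b)}|$, and let $\sigma_1,\dots,\sigma_{m'+1}$ be the unstarred spaces of $w^{(b)}$ (so $\sigma_i$ precedes the $i$-th letter for $i\le m'$, $\sigma_{m'+1}$ follows the $m'$-th letter, and $a^{(b)}_i$ is the label of $\sigma_i$). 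The key observation is that no index in $\{m'+1-\gamma_g,\dots,m'+b\}$ can be a $g$-gap $h$-level excedance place of $w^{(b)}$: in the weakly increasing rearrangement $x_1x_2\cdots$ of $w^{(b)}$ the $b$ copies of $n$ occupy the last $b$ positions and the $\gamma_g$ largest letters of $M'$ — the copies of $n-1,n-2,\dots,n-g+1$ — occupy positions $m'+1-\gamma_g,\dots,m'$, so $x_i\ge n-g+1$ for every $i\ge m'+1-\gamma_g$, whence $\alpha_i\ge x_i+g$ would force $\alpha_i\ge n+1$, which is impossible. Hence $P^{(b)}\subseteq\{1,\dots,m'-\gamma_g\}$, and Definition~\ref{def-den_r-label} then shows that $a^{(b)}_i$ is determined by $m'$, $\gamma_g$ and $P^{(b)}$ alone: $a^{(b)}_i=m'+1-i$ for $i>m'-\gamma_g$; $a^{(b)}_i=\gamma_g+|\{p\in P^{(b)}:p\ge i\}|$ for $i\in P^{(b)}$; and $a^{(b)}_i=\gamma_g+s_b+|\{j\le i:\ j\le m'-\gamma_g,\ j\notin P^{(b)}\}|$ for the remaining $i\le m'-\gamma_g$.

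From this description I would read off $c=a^{(b)}_{y_b}$: one has $c=0$ only if $y_b=m'+1$; $1\le c\le\gamma_g$ only if $m'-\gamma_g<y_b\le m'$; $\gamma_g+1\le c\le\gamma_g+s_b$ exactly when $y_b\in P^{(b)}$; and $\gamma_g+s_b+1\le c\le m'$ exactly when $y_b\le m'-\gamma_g$ and $y_b\notin P^{(b)}$. Applying~\eqref{eq-Excp_r} of Lemma~\ref{lem-phi-den1} with $w=w^{(b)}$ and $y=y_b$ gives $P^{(b+1)}=P^{(b)}$ whenever $c\le\gamma_g+s_b$, and $P^{(b+1)}=P^{(b)}\cup\{y_b\}$, necessarily with $y_b\le m'-\gamma_g$, when $c>\gamma_g+s_b$.

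The conclusion then follows by comparing the two labelings using the closed formula. If $P^{(b+1)}=P^{(b)}$, then $a^{(b+1)}_i=a^{(b)}_i$ for every $i$. If $P^{(b+1)}=P^{(b)}\cup\{y_b\}$ with $y_b\le m'-\gamma_g$, I claim $a^{(b+1)}_i=a^{(b)}_i$ still holds for every $i>y_b$: a label $a_i$ with $i>m'-\gamma_g$ is unaffected because $y_b\le m'-\gamma_g$; for an excedance space $\sigma_i$ with $i>y_b$ the count $|\{p\in P:p\ge i\}|$ is unchanged since $y_b<i$; and for a non-excedance space $\sigma_i$ with $y_b<i\le m'-\gamma_g$ the increase of $|P|$ by one is cancelled exactly by the loss of $y_b$ from the set $\{j\le i:\ j\le m'-\gamma_g,\ j\notin P\}$. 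In either case $a^{(b+1)}_i=a^{(b)}_i$ for all $i>y_b$. Since $\lambda^{(b+1)}=\lambda^{(b)}\setminus\{c\}\subseteq\lambda^{(b)}$ and $a^{(b)}_i\notin\lambda^{(b)}$ for all $i>y_b$ by the maximality in the definition of $y_b$, we obtain $a^{(b+1)}_i\notin\lambda^{(b+1)}$ for all $i>y_b$, i.e.\ $y_{b+1}\le y_b$.

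The hard part is the bookkeeping in the case $c>\gamma_g+s_b$: one must verify that adjoining the single index $y_b$ to the excedance set perturbs the $g\den_h$-labeling only at spaces weakly to the left of $\sigma_{y_b}$. This rests on the two structural facts established above — that all excedance places lie in $\{1,\dots,m'-\gamma_g\}$, and that raising $|P|$ by one precisely offsets the leftward shift of the block of labels assigned by the last clause of Definition~\ref{def-den_r-label} — and it requires some care to combine the three labeling clauses correctly.
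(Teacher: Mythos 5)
Your proof is correct and takes essentially the same route as the paper: both reduce the lemma to comparing the $g\den_h$-labelings of $w^{(b)}$ and $w^{(b+1)}$ via \eqref{eq-Excp_r} together with the maximality in the choice of $y_b$. The only difference is one of detail — the paper asserts the set equality $\{a^{(b)}_1,\dots,a^{(b)}_{y_b}\}=\{a^{(b+1)}_1,\dots,a^{(b+1)}_{y_b}\}$ as following directly from the labeling rules, while you verify the equivalent pointwise statement $a^{(b+1)}_i=a^{(b)}_i$ for all $i>y_b$ explicitly (including the observation that all $g$-gap $h$-level excedance places lie in $\{1,\dots,m'-\gamma_g\}$), which is a welcome elaboration rather than a departure.
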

\begin{proof}
 In order to show that 
$y_{b+1}\leq y_b$, it suffices to show that $$\lambda^{(b+1)}\subseteq \{a^{(b+1)}_{1}, a^{(b+1)}_2, \ldots, a^{(b+1)}_{y_b}\}.$$  Recall that $y_b$ is the greatest integer such that $a^{(b)}_{y_b}\in \lambda^{(b)}$.  By the choice of $y_b$, it is easily seen that
$$
\lambda^{(b+1)} \subseteq \{a^{(b)}_{1}, a^{(b)}_2, \ldots, a^{(b)}_{y_b}\}. 
$$

In order to prove  $y_{b+1}\leq y_b$, it suffices to show that 
\begin{equation}\label{eq-set}
	\{a^{(b)}_{1}, a^{(b)}_2, \ldots, a^{(b)}_{y_b}\}= \{a^{(b+1)}_{1}, a^{(b+1)}_2, \ldots, a^{(b+1)}_{y_b}\}
	\end{equation}
Assume that $w^{(b)}$ has exactly $s$ $g$-gap $h$-level excedance letters.  Recall that    $w^{(b+1)}=\phi_{n, g,h}(w^{(b)}, a^{(b)}_{y_b})$. By (\ref{eq-Excp_r}), we deduce that 
$$
	g\Excp_h(w^{(b+1)})=\left\{ \begin{array}{ll}
		g\Excp_h(w^{(b)})&\, \mathrm{if}\,\,   0\leq a^{(b)}_{y_b}\leq s+\gamma_g  \\
		g\Excp_h(w^{(b)})\cup \{y_b\}&\, \mathrm{otherwise}.
	\end{array}
	\right.
$$
Then  (\ref{eq-set}) follows directly from     the rules specified in $g\den_{h}$-labeling, completing the proof. 
	\end{proof}

\begin{lemma}\label{lem-Phi-den1}
	For all $0\leq b<k_n$, we have $$
	g\den_{h}(w^{(b+1)})=g\den_{h}(w^{(b)})+a^{(b)}_{y_b},
	$$
	and 
	$$
	\psi_{n, g,h}(w^{(b+1)})=(w^{(b)}, a^{(b)}_{y_b}). 
	$$
	
	\end{lemma}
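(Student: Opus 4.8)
The plan is to deduce both identities from Lemmas~\ref{lem-phi-den1} and~\ref{lem-psi-den1}. Each of these lemmas applies to a pair $(w,c)$ under the hypothesis that $w$ has no letter equal to $n$ strictly to the left of the insertion position $y$ associated with $c$ by the $g\den_h$-labeling; since $w^{(b+1)}=\phi_{n,g,h}(w^{(b)},a^{(b)}_{y_b})$, the whole statement reduces to checking that this positional hypothesis holds at each stage. I would therefore first establish the auxiliary invariant: \emph{for every $b$ with $0\le b<k_n$, no occurrence of $n$ in $w^{(b)}$ lies strictly to the left of position $y_b$} (with the usual convention $y_b=m'+1$ when $a^{(b)}_{y_b}=0$). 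Once this is available, applying Lemma~\ref{lem-phi-den1} to the pair $(w^{(b)},a^{(b)}_{y_b})$ yields $g\den_h(w^{(b+1)})=g\den_h(w^{(b)})+a^{(b)}_{y_b}$, and applying Lemma~\ref{lem-psi-den1} to the same pair yields $\psi_{n,g,h}(w^{(b+1)})=(w^{(b)},a^{(b)}_{y_b})$, which are exactly the two assertions of the lemma.

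To prove the invariant I would induct on $b$. The base case $b=0$ is vacuous, since $w^{(0)}=w\in\mathfrak{S}_{M'}$ contains no letter equal to $n$. For the inductive step, assume the invariant for some $b$ with $b\le k_n-2$ and trace through the three cases in the definition of $\phi_{n,g,h}$ to locate the copies of $n$ in $w^{(b+1)}$. In Case~1 one has $a^{(b)}_{y_b}=0$ and $y_b=m'+1$, the new $n$ is inserted immediately after $\alpha_{m'}$, and positions $1,\dots,m'$ of $w^{(b)}$ are left intact, so by the induction hypothesis $w^{(b+1)}$ has no $n$ before position $m'+1=y_b$. In Cases~2 and~3 the new $n$ is placed in position $y_b$ while every other position altered by $\phi_{n,g,h}$ lies strictly to the right of $y_b$ (this point is taken up below), so again by the induction hypothesis $w^{(b+1)}$ has no $n$ before position $y_b$. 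In all cases $w^{(b+1)}$ has no $n$ strictly to the left of position $y_b$, and since Lemma~\ref{lem-y_b} gives $y_{b+1}\le y_b$, it has no $n$ strictly to the left of position $y_{b+1}$; this is the invariant for $b+1$.

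The one point needing genuine care is the assertion, in Cases~2 and~3, that every altered position other than $y_b$ itself lies strictly to the right of $y_b$. In Case~3 this is clear, because apart from turning $\alpha_{y_b}$ into $n$ the map only shifts (rightward) the non-$g$-gap-$h$-level excedance letters occurring to the right of $\alpha_{y_b}$, all of whose positions exceed $y_b$ (recall that $y_b\le m'$ in this case). In Case~2 the positions touched are $y_b=i_1<i_2<\dots<i_k$ together with $j_1<\dots<j_b$, and the $j_z$ all lie weakly to the right of the smallest index $p$ with $x_p=\alpha_{i_k}-g+1$; here I would show $p>y_b$ by noting that $\alpha_{i_k}\ge x_{i_k}+g$ forces $x_p=\alpha_{i_k}-g+1\ge x_{i_k}+1>x_{i_k}\ge x_{y_b}$, whence $p>y_b$ by the weak monotonicity of $\bar{w}$ (the existence of such an index $p$ uses the standing hypothesis that every $k_i\ge1$). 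With this in hand the induction goes through, and invoking Lemmas~\ref{lem-phi-den1} and~\ref{lem-psi-den1} as described above completes the proof.
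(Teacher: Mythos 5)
Your proposal is correct and follows essentially the same route as the paper: both reduce the lemma to the invariant that $w^{(b)}$ has no occurrence of $n$ strictly left of position $y_b$, prove that invariant by induction on $b$ using the fact that $\phi_{n,g,h}$ only alters positions weakly to the right of $y_b$, and close the inductive step with $y_{b+1}\le y_b$ from Lemma~\ref{lem-y_b} before invoking Lemmas~\ref{lem-phi-den1} and~\ref{lem-psi-den1}. The only difference is that you spell out (via the estimate $x_p=\alpha_{i_k}-g+1>x_{i_k}\ge x_{y_b}$, hence $p>y_b$) why Case~2 of $\phi_{n,g,h}$ leaves the prefix before $y_b$ untouched, a point the paper merely asserts.
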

\begin{proof}
	Recall that $w^{(b+1)}=\phi_{n,g,h}(w^{(b)}, a^{(b)}_{y_{b}})$ for all $0\leq b<k_n$.   Let $w^{(b)}=\alpha^{(b)}_{1}\alpha^{(b)}_2\ldots \alpha^{(b)}_{m'+b}$ for all $0\leq b\leq k_n$.
By Lemmas \ref{lem-phi-den1} and \ref{lem-psi-den1}, 	in order to prove $
	g\den_{h}(w^{(b+1)})=g\den_{h}(w^{(b)})+a^{(b)}_{y_b}$ and $
	\psi_{n,g,h}(w^{(b+1)})=(w^{(b)}, a^{(b)}_{y_b})
	$, it suffices to show that  there does not exist  any $n's$ occurring to the left of $\alpha^{(b)}_{y_b}$ in $w^{(b)}$ for all  $0\leq b<k_n$.    We proceed to prove the assertion  by induction on $b$.  Clearly, the assertion holds for $b=0$.  Assume that the assertion also holds for $b-1$, that is,  there does not exist any $n's$ occurring to the left of $\alpha^{(b-1)}_{y_{b-1}}$ in $w^{(b-1)}$.  
	 Recall that  the map $\phi_{n,g,h}$ keeps the letters occurring to the left of $\alpha^{(b-1)}_{y_{b-1}}$ in place. 
	  This implies that $\alpha^{(b)}_{y_{b-1}}=n$ is the leftmost $n$ in $w^{(b)}$. 
	    Lemma \ref{lem-y_b} tells that $y_{b}\leq y_{b-1}$.  This yields  that there does not exists any $n's$ occurring to the left of $\alpha^{(b)}_{y_{b}}$ in $w^{(b)}$ by induction hypothesis, completing the proof.

	\end{proof}

\begin{lemma}\label{lem-Phi-den2}
 If $g\exc_\ell(w)=s$ and $g\exc_\ell(w^{(k_n)})=t$, then we have
$$
	\lambda_{t-s}\geq t+\delta_\ell+\gamma_g \geq \lambda_{t-s+1}.
 $$
 	
	\end{lemma}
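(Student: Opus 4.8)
The plan is to track the quantity $g\exc_\ell$ through the $k_n$ iterations that build $\Phi^{\den}_{g,h}(w,\lambda)=w^{(k_n)}$. Write $\kappa=\delta_\ell+\gamma_g$ and $s_b=g\exc_\ell(w^{(b)})$, so $s_0=s$ and $s_{k_n}=t$. Applying Lemma~\ref{lem-phi-den3} at step $b$ (with $N'=M'\cup\{n^{b}\}$; note that $\delta_\ell$ and $\gamma_g$ involve only $k_1,\dots,k_{n-1}$, hence are the same at every step and for $M$), one obtains $s_{b+1}\in\{s_b,s_b+1\}$, with $s_{b+1}=s_b+1$ exactly when $a^{(b)}_{y_b}>s_b+\kappa$; call such $b$ an \emph{increasing step}. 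Since $(s_b)$ is nondecreasing from $s$ to $t$, there are exactly $t-s$ increasing steps $b_1<\cdots<b_{t-s}$, and $s_{b_j}=s+j-1$. Because the set of labels $\{a^{(b)}_1,\dots,a^{(b)}_{m'+1}\}$ is always $\{0,1,\dots,m'\}$, which contains every part of $\lambda^{(b)}$, each $y_b$ is well defined, $\lambda^{(k_n)}=\emptyset$, and the labels deleted over all steps form the multiset $\{\lambda_1,\dots,\lambda_{k_n}\}$.

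The crux of the argument will be the claim that \emph{at an increasing step $b$ one has $a^{(b)}_{y_b}=\max(\lambda^{(b)})$}. To prove it I would first record the elementary bound $|g\Excp_h(w^{(b)})|\le g\exc_\ell(w^{(b)})+\delta_\ell$: an index $i$ with $\alpha_i\ge x_i+g$ either satisfies $x_i\ge\ell$, and is then counted by $g\exc_\ell$, or satisfies $x_i<\ell$, and there are only $\delta_\ell$ such indices. Hence at an increasing step $a^{(b)}_{y_b}>s_b+\kappa=g\exc_\ell(w^{(b)})+\delta_\ell+\gamma_g\ge\gamma_g+|g\Excp_h(w^{(b)})|$, so in the $g\den_h$-labeling the space carrying this label is one of the ``remaining'' spaces, which are labeled from left to right; among these, a larger label lies at a more rightward space. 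If $\mu:=\max(\lambda^{(b)})$ were strictly larger than $a^{(b)}_{y_b}$, then $\mu$ would likewise exceed $\gamma_g+|g\Excp_h(w^{(b)})|$ and occupy a remaining space, at a position which is $\le y_b$ by the maximality of $y_b$ but $\ne y_b$, hence strictly to the left of $y_b$ --- forcing $\mu<a^{(b)}_{y_b}$, a contradiction. So $\mu=a^{(b)}_{y_b}$.

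Granting the claim, I would show by induction on $j$ that the $j$-th increasing step deletes the part $\lambda_j$ and that $\lambda_j\ge s+j+\kappa$. Indeed, just before step $b_j$ the increasing steps so far have deleted $\lambda_1,\dots,\lambda_{j-1}$ (induction hypothesis), while every non-increasing step $b<b_j$ has $s_b\le s+j-1$ and so deletes a label $\le s+j-1+\kappa$; thus $\lambda^{(b_j)}$ equals $\{\lambda_j,\dots,\lambda_{k_n}\}$ with some elements $\le s+j-1+\kappa$ removed, and in particular retains every part among $\lambda_j,\dots,\lambda_{k_n}$ exceeding $s+j-1+\kappa$. By the claim and $s_{b_j}=s+j-1$ we get $\max(\lambda^{(b_j)})=a^{(b_j)}_{y_{b_j}}>s+j-1+\kappa$; since $\max(\lambda^{(b_j)})\le\lambda_j$, this forces $\lambda_j>s+j-1+\kappa$, so $\lambda_j$ is retained and $\max(\lambda^{(b_j)})=\lambda_j$. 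Integrality yields $\lambda_j\ge s+j+\kappa$, and taking $j=t-s$ gives $\lambda_{t-s}\ge t+\kappa=t+\delta_\ell+\gamma_g$. For the reverse inequality, the non-increasing steps delete exactly the multiset $\{\lambda_{t-s+1},\dots,\lambda_{k_n}\}$, so $\lambda_{t-s+1}=a^{(b)}_{y_b}$ at some non-increasing step $b$, whence $\lambda_{t-s+1}\le s_b+\kappa\le t+\kappa=t+\delta_\ell+\gamma_g$ by Lemma~\ref{lem-phi-den3} and monotonicity of $(s_b)$. Combining the two bounds gives $\lambda_{t-s}\ge t+\delta_\ell+\gamma_g\ge\lambda_{t-s+1}$.

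The main obstacle is the claim that an increasing step deletes the current largest part of $\lambda^{(b)}$: this is where the fine structure of the $g\den_h$-labeling (the monotone left-to-right labeling of the ``remaining'' spaces) together with the inequality $|g\Excp_h|\le g\exc_\ell+\delta_\ell$ are essential. Once that is in hand, everything else is bookkeeping with the multiset $\lambda$ driven by Lemma~\ref{lem-phi-den3}.
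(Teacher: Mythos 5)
Your proof is correct and follows essentially the same route as the paper: both arguments track $g\exc_\ell$ through the $k_n$ applications of $\phi_{n,g,h}$, use Lemma~\ref{lem-phi-den3} to characterize the increasing steps, invoke the bound $|g\Excp_h(w^{(b)})|\le g\exc_\ell(w^{(b)})+\delta_\ell$ together with the left-to-right monotonicity of the labels on the ``remaining'' spaces, and conclude that the labels consumed at increasing steps are precisely the $t-s$ largest parts while all other labels are at most $t+\delta_\ell+\gamma_g$. The only organizational difference is that you show each increasing step removes $\max(\lambda^{(b)})$ and identify it with $\lambda_j$ by induction, whereas the paper instead establishes the weakly decreasing chain $a^{(i_1)}_{y_{i_1}}\ge\cdots\ge a^{(i_k)}_{y_{i_k}}\ge t+\delta_\ell+\gamma_g$ via its equation (\ref{eq-label}).
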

  \begin{proof}
  	Recall that  $w^{(b)}=\phi_{n,g,h}(w^{(b-1)}, a^{(b-1)}_{y_{b-1}})$. Then by Lemma \ref{lem-phi-den3},  we     have either  $g\exc_\ell(w^{(b)})=g\exc_\ell(w^{(b-1)})$ or $g\exc_\ell(w^{(b)})=g\exc_\ell(w^{(b-1)})+1$. 
  	Let $t-s=k$. 
  	 Since $g\exc_\ell(w^{(k_n)})-g\exc_\ell(w)=k$,   there are exactly $k$  integers $b$ such that $g\exc_\ell(w^{(b+1)})=g\exc_\ell(w^{(b)})+1$.    
  	Suppose that such integers are given by  $i_1, i_2, \ldots, i_{k}$ with $i_1<i_2<\cdots<i_{k}$.  
  	  Again by Lemma \ref{lem-phi-den3}, we deduce that  
  	  $$a^{(b)}_{y_{b}}\leq g\exc_\ell(w^{(b)})+\gamma_g+\delta_\ell\leq g\exc_\ell(w^{(k_n)})+\gamma_g+\delta_\ell=t+\gamma_g+\delta_\ell$$ for all 
  	  $b\neq i_j $ and $1\leq j\leq k$.  In order to show that 	$\lambda_{k}\geq t+\delta_\ell+\gamma_g \geq \lambda_{k+1}$, it suffices to show that 	$a^{(z)}_{y_{z}}= t+\gamma_g+\delta_\ell $  when  $z=i_j$ for  any $1\leq j\leq k$.

  	   By the choice of $i_k$,  we have $g\exc_\ell(w^{(i_k+1)})=g\exc_\ell(w^{(i_k)})+1$  and $g\exc_\ell(w^{(i_k+1)})=t$.  
  	Again by  Lemma \ref{lem-phi-den3}, we have
  	$$
  	a^{(i_{k})}_{y_{{i_{k}}}}\geq   g\exc_\ell(w^{(i_{k})})+\gamma_g+\delta_\ell +1=  t+\gamma_g+\delta_\ell. 
  	$$
   In the following, we aim to show that
  		$a^{(i_{j+1})}_{y_{{i_{j+1}}}}\leq a^{(i_{j})}_{y_{{i_{j}}}}$ for all $1\leq j <k$. 
  	Since $g\exc_\ell(w^{(i_{j}+1)})=g\exc_\ell(w^{(i_j)})+1$, by Lemma \ref{lem-phi-den3}, we have $a^{(i_{j})}_{y_{{i_{j}}}}> g\exc_\ell(w^{(i_{j})})+\gamma_g+\delta_\ell $. 
  	Assume that $w^{(i_{j})}$ has exactly $z$ $g$-gap $h$-level excedance letters. 
  	Since $h\leq g+\ell$,  one can easily check that $z\leq g\exc_\ell(w^{(i_j)})+\delta_\ell$. Then $a^{(i_{j})}_{y_{{i_{j}}}}> g\exc_\ell(w^{(i_{j})})+\gamma_g+\delta_\ell $ would imply that $\alpha^{(i_j)}_{y_{i_j}}$  is a non-$g$-gap-$h$-level excedance letter in $w^{(i_j)}$ by the rules specified in the $g\den_h$-labeling.    Again by the rules specified in the $g\den_{h}$-labeling, we have
  	\begin{equation}\label{eq-label}
  	a^{(i_j)}_{y_{i_j}}=\max\{a^{(i_j)}_1,  a^{(i_j)}_2, \ldots, a^{(i_j)}_{y_{i_j}}\}. 
  	\end{equation}
  	Note that $y_{i_j}$ is the greatest integer    such that $a^{(i_j)}_{y_{i_j}}\in \lambda^{(i_j)}$.  Since $a^{(i_{j+1})}_{y_{i_{j+1}}}\in \lambda^{(i_j)}$,   we have $a^{(i_{j+1})}_{y_{i_{j+1}}}\in \{a^{(i_j)}_1,  a^{(i_j)}_2, \ldots, a^{(i_j)}_{y_{i_j}}\}.$   This combined with (\ref{eq-label}) yields that 
  	$a^{(i_{j+1})}_{y_{i_{j+1}}}\leq a^{(i_j)}_{y_{i_j}}$ as desired, 
  	completing the proof. 
   
  	\end{proof}

  \noindent{\bf Proof of Theorem \ref{th-Phi-den}.}  Properties (\ref{eq-Phiden1}) and (\ref{eq-Phiden2}) follow  directly from  Lemmas \ref{lem-Phi-den1} and \ref{lem-Phi-den2}. By cardinality reasons, in order to show that $\Phi^{\den}_{g,h}$ is a bijection, it suffices to show that $\Phi^{\den}_{g,h}$ is an injection. 
  Let $(w, \lambda)\in \mathfrak{S}_{M'}\times \mathcal{P}(k_n, m')$ and let $u=\Phi^{\den}_{g,h}(w, \lambda)$. 
  We retain  all the notations  from  the definition of $\Phi^{\den}_{g,h}$.  By Lemma \ref{lem-Phi-den1}, for all $1\leq b\leq k_n$, we can recover 
  $(w^{(b-1)}, a^{(b-1)}_{y_{b-1}})$ by  applying the map $\psi_{n,g,h}$ to $w^{(b)}$. This establishes the injectivity of  $\Phi^{\den}_{g,h}$  and hence it  is a bijection, completing the proof. \qed

   \subsection{Proof of Theorem \ref{th-Phi-maj}} This subsection is devoted to constructing the bijection $\Phi^{\maj}_{g,\ell}$.  To this end, we introduce a new labeling scheme.  
  Throughout this section, we   assume that  $n\geq g+\ell$ and   $N=M'\cup\{n^a\}$ with $a\geq 0$. 
   \begin{definition}
   	Given $w=\alpha_1\alpha_2\ldots \alpha_{m'+a}\in \mathfrak{S}_{N}$, 
   	let
   	$$
   	S(w)=\{j\mid \alpha_{j-1}\geq \alpha_j+g, \alpha_j\geq \ell\}\cup \{j\mid  \alpha_j<\ell\,\, \mbox{or}\,\, n>\alpha_j\geq n-g+1\}. 
   	$$
   	The {\bf  $g\maj_\ell$-labeling} of $w$  is obtained as follows.
   	\begin{itemize}
   		\item Star the space before each $n$.
   		\item Label the  space after $\alpha_{m'+a}$ by $0$.
   		\item Label the   space  before each  $\alpha_j$ with $j\in  S(w)$ from right to left with $1,2,\ldots, |S(w)|$. 
   		\item  Label the remaining unstarred   spaces   from left to right with $|S(w)|+1, \ldots, m' $.
   	\end{itemize}
   \end{definition}
   It is apparent that $|S(w)|=g\des_{\ell}(w)+\delta_\ell+\gamma_g$. 
   For example, let  $N=\{1^3, 2^3, 3, 4, 5^2, 6^3, 7, 8\}$.  It is apparent that the 
   $2\maj_2$-labeling of $w=312117248656625\in \mathfrak{S}_{N}$ is given by
   $$
   \begin{array}{llllllllllllllll} 
   _8 3&_{{\red 7}}1&_92&_{\red{6}}1&_{\red{5}}1&_{\red{4}}7&_{\red{3}}2&_{10}4&*8&_{\red{2}}6&_{11}5&_{12}6&_{13}6&_{\red{1}}2&_{14}5&_0\\
   \end{array}
   $$
    where the   label  of the space before each $\alpha_j$  with  $j\in S(w)$ is in red.

   Now we are in the position to construct the map $\Phi^{\maj}_{g, \ell}$. 
   
    \begin{framed}
   	\begin{center}
   		{\bf The map $\Phi^{\maj}_{g,\ell}:\mathfrak{S}_{M'}\times \mathcal{P}(k_n, m') \longrightarrow \mathfrak{S}_{M}$ }
   	\end{center}
   Given  $(w, \lambda)\in \mathfrak{S}_{M'}\times \mathcal{P}(k_n, m') $, let $\lambda^{(0)}=\lambda=(\lambda_1, \lambda_2, \ldots, \lambda_{k_n})$ and  $w^{(0)}=w$. Set $b=0$ and carry out the following procedure. 
   	
   	(A) Define 
   	$$T^{(b)}=(a^{(b)}_{1}, a^{(b)}_2, \ldots,  a^{(b)}_{m'+1})$$ where $a^{(b)}_{i}$  is label of the $i$-th  unstarred space of  $w^{(b)}$ (counting from left to right) under the $g\maj_\ell$-labeling of $w^{(b)}$ for all $1\leq i\leq m'+1$. 
   	Choose $y_b$ to the greatest  integer such that $a^{(b)}_{y_b}\in \lambda^{(b)}$ and let  
 $w^{(b+1)} $  be the permutation obtained from $w^{(b)}$ by inserting an $n$ into the $y_{b}$-th unstarred space of $w^{(b)}$. Set $\lambda^{(b+1)}=\lambda^{(b)}\setminus \{ a^{(b)}_{y_b}\}$.

   	(B)  Replace $b$ by $b+1$. If $b=k_n$, then we stop. Otherwise, we go back to (A). 
   	
   	Define $\Phi^{\maj}_{g,h}(w, \lambda)=w^{(k_n)}$.

   \end{framed}

Take $M=\{1^3, 2,3,4^2, 5^2, 6, 7^4\}$, $w=4151652413$ and $\lambda=(9,9,6,3)$ for example. Then we can obtain $\Phi^{\maj}_{2,3}(w, \lambda)=w^{(4)}= 41571675727413$ as follows.  
\begin{table}[h]
	\centering
  	\begin{minipage}[t]{0.95\textwidth}  
  	\renewcommand{\arraystretch}{1.6}
  	\setlength{\tabcolsep}{3pt}  
  	\fontsize{9}{11}\selectfont  
  	
  	\begin{tabular}{|c|>{\centering\arraybackslash}m{5.5cm}|>{\centering\arraybackslash}m{5.5cm}|>{\centering\arraybackslash}m{2.8cm}|}
  		\hline
  		$b$ & $w^{(b)}$ & $T^{(b)}$ & $\lambda^{(b)}$ \\
  		\hline
  		0 & 
  		$_6 4_{\color{red}5}1_7 5_{\color{red}4}1_{\color{red}3} 6_{8} 5_{\color{red}2} 2_9 4_{\color{red}1} 1_{10} 3_0$ & 
  		$(6,5,7,4,3,8,2,9,1,10,0)$ & 
  		$(9,9,6,3)$ \\
  		\hline
  		1 & 
  		$_7 4_{\color{red}6}1_{8}5_{\color{red}5}1_{\color{red}4} 6_{9} 5_{\color{red}3} 2_* 7_{\color{red}2} 4_{\color{red}1} 1_{10} 3_0$ &  
  		$(7,6,8,5,4,9,3,2,1,10,0)$ & 
  		$(9,6,3)$ \\
  		\hline
  		2 & 
  		$_7 4_{\color{red}6}1_{8}5_{\color{red}5}1_{\color{red}4} 6_{9} 5_* 7_{\color{red}3} 2_* 7_{\color{red}2} 4_{\color{red}1} 1_{10} 3_0$&  
  		$(7,6,8,5,4,9,3,2,1,10,0)$ & 
  		$(9,6)$ \\
  		\hline
  		3 & 
  		$_8 4_{\color{red}7}1_{9}5_{\color{red}6}1_{\color{red}5} 6_* 7_{\color{red}4} 5_* 7_{\color{red}3} 2_* 7_{\color{red}2} 4_{\color{red}1} 1_{10} 3_0$&  
  		$(8,7,9,6,5,4,3,2,1,10,0)$ & 
  		$(6)$ \\
  		\hline
  	\end{tabular}
  	\\[0.5em] 
  \end{minipage}
  
   \end{table}

  With a careful examination of 
  the construction of $w^{(b)}$ and the rules specified in the $g\maj_\ell$-labeling of $w^{(b)}$, we obtain the following lemma.

  \begin{lemma}\label{lem-label-maj}
  	For all $0\leq b<k_n-1$, we have 
  	$$
  	\{a^{(b+1)}_1, a^{(b+1)}_2, \ldots, a^{(b+1)}_{y_{b}}\}= 	\{a^{(b)}_1, a^{(b)}_2, \ldots, a^{(b)}_{y_{{b}}}\}. 
  	$$
  	Moreover, we have 
  	$$
  	a^{(b)}_{y_b}=\min\{a^{(b)}_1, a^{(b)}_2, \ldots, a^{(b)}_{y_{{b}}}\}
  	$$
 when $a^{(b)}_{y_b}\leq |S(w^{(b)})|$, and 
 	$$
 a^{(b)}_{y_b}=\max\{a^{(b)}_1, a^{(b)}_2, \ldots, a^{(b)}_{y_{{b}}}\}
 $$
  otherwise.
  	\end{lemma}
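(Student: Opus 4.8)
The plan is to track exactly how the $g\maj_\ell$-labeling of $w^{(b)}$ changes when an $n$ is inserted into its $y_b$-th unstarred space. First I isolate a structural description of the labeling. Fix $a\ge 0$, let $v\in\mathfrak{S}_{M'\cup\{n^a\}}$, and list the unstarred spaces of $v$ from left to right as $U_1<U_2<\cdots<U_{m'+1}$ (there are exactly $m'+1$ of them, since precisely $a$ of the $m'+a+1$ spaces are starred). Because $n$ is the largest letter and $n\ge g+\ell$, one checks directly from the definition that $j\notin S(v)$ whenever $v_j=n$; hence every space immediately before a letter $v_j$ with $j\in S(v)$ --- call these the \emph{$S$-spaces} --- is unstarred. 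Thus $U_{m'+1}$ (the space after the last letter) receives label $0$, the $S$-spaces receive $1,2,\ldots,|S(v)|$ read from right to left, and the remaining unstarred spaces receive $|S(v)|+1,\ldots,m'$ read from left to right. Consequently, for any $1\le y\le m'$, writing $p$ for the number of $S$-spaces among $U_1,\ldots,U_y$, those are the $p$ leftmost $S$-spaces overall and bear the labels $|S(v)|,|S(v)|-1,\ldots,|S(v)|-p+1$, while the other $y-p$ of $U_1,\ldots,U_y$ are the $y-p$ leftmost non-$S$ unstarred spaces and bear $|S(v)|+1,\ldots,|S(v)|+(y-p)$; hence
$$\{\text{labels of }U_1,\ldots,U_y\}=\{\,|S(v)|-p+1,\ |S(v)|-p+2,\ \ldots,\ |S(v)|-p+y\,\}$$
is a block of $y$ consecutive integers, and the label of $U_y$ itself is the least element $|S(v)|-p+1$ when $U_y$ is an $S$-space and the greatest element $|S(v)|-p+y$ when $U_y$ is a non-$S$ unstarred space.

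Applying this with $v=w^{(b)}$ and $y=y_b$ yields the second assertion of the lemma: if $y_b=m'+1$ then $a^{(b)}_{y_b}=0\le|S(w^{(b)})|$ is the minimum of $\{0,\ldots,m'\}$; if $y_b\le m'$ and $U_{y_b}$ is an $S$-space then $a^{(b)}_{y_b}=|S(w^{(b)})|-p+1\le|S(w^{(b)})|$ is the minimum of its block; and if $y_b\le m'$ and $U_{y_b}$ is a non-$S$ unstarred space then $a^{(b)}_{y_b}=|S(w^{(b)})|-p+y_b>|S(w^{(b)})|$ is the maximum of its block. Since $a^{(b)}_{y_b}\le|S(w^{(b)})|$ holds precisely in the first two cases and $a^{(b)}_{y_b}>|S(w^{(b)})|$ precisely in the third, this is exactly the claimed dichotomy.

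For the first assertion, fix $b$ with $0\le b<k_n-1$ and put $w=w^{(b)}$, $w'=w^{(b+1)}$; the case $y_b=m'+1$ is trivial since then both label sets are all of $\{0,1,\ldots,m'\}$, so assume $U_{y_b}$ is the space immediately before the $j$-th letter $\alpha_j$ of $w$. Forming $w'$ inserts an $n$ into that gap, so $w'$ coincides with $w$ in positions $1,\ldots,j-1$, holds the new $n$ in position $j$, and has $\alpha_j,\alpha_{j+1},\ldots$ shifted to positions $j+1,j+2,\ldots$; the space just before the new $n$ is starred, while $U_{y_b}$ reappears in $w'$ as the still-unstarred space before $\alpha_j$, now in position $j+1$. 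Hence $U_1,\ldots,U_{y_b-1}$ are exactly the first $y_b-1$ unstarred spaces of $w'$ --- each preceding the same letter in a position $\le j-1$, where $w$ and $w'$ agree, so each keeps its $S$-status --- followed by $U_{y_b}$ as the $y_b$-th unstarred space of $w'$. Writing $[\,\cdot\,]$ for the indicator of a statement, it follows that the numbers $p,p'$ of $S$-spaces among the first $y_b$ unstarred spaces of $w,w'$ satisfy
$$p'-p=[\,j+1\in S(w')\,]-[\,j\in S(w)\,].$$
On the other hand, the new $n$ in position $j$ of $w'$ never lies in $S(w')$ ($n$ being maximal), so comparing the two copies of $S$ entry by entry --- positions $\le j-1$ agree; positions $\ge j+2$ of $w'$ lie in $S(w')$ exactly when the corresponding positions $\ge j+1$ of $w$ lie in $S(w)$; and positions $j,j+1$ of $w'$ together contribute $[\,j+1\in S(w')\,]$ to $|S(w')|$ in place of the contribution $[\,j\in S(w)\,]$ of position $j$ to $|S(w)|$ --- gives
$$|S(w')|-|S(w)|=[\,j+1\in S(w')\,]-[\,j\in S(w)\,].$$
Subtracting, $|S(w')|-p'=|S(w)|-p$, so by the block description the two sets $\{a^{(b)}_1,\ldots,a^{(b)}_{y_b}\}$ and $\{a^{(b+1)}_1,\ldots,a^{(b+1)}_{y_b}\}$ are the same block $\{|S(w)|-p+1,\ldots,|S(w)|-p+y_b\}$, as required.

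The points demanding care are the boundary configurations --- $y_b=1$, where ``positions $\le j-1$'' is vacuous; $y_b=m'+1$, handled separately; and the fact that $U_{m'+1}$ always bears label $0$ and is the rightmost unstarred space --- together with the verification that inserting $n$ perturbs $S$ only in the controlled fashion above, which relies on $n$ being the largest letter (so it never enters $S$ and no $S$-space precedes an $n$) and on $n\ge g+\ell$. I expect the second displayed identity, pinning down $|S(w')|-|S(w)|$ by the entry-by-entry comparison of the two copies of $S$, to be the main obstacle; once it is combined with the first display to give $|S(w')|-p'=|S(w)|-p$, the block description of the labeling forces the two label sets to coincide.
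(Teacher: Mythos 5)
Your proof is correct: the block description of the labels of the first $y$ unstarred spaces, the resulting min/max dichotomy according to whether $U_{y_b}$ is an $S$-space (or the final space) versus a non-$S$ unstarred space, and the cancellation $|S(w^{(b+1)})|-p'=|S(w^{(b)})|-p$ coming from the two indicator identities all check out against the definition of the $g\maj_\ell$-labeling. The paper offers no argument for this lemma beyond asserting that it follows from ``a careful examination'' of the construction and the labeling rules, and your write-up is exactly that examination, carried out correctly (including the boundary cases $y_b=m'+1$ and $y_b=1$).
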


  \begin{lemma}\label{lem-y_b-maj}
  	For all $0\leq b<k_n-1$, we have $y_{b+1}\leq y_{b}$. 
  \end{lemma}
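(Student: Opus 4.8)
The plan is to follow the same route as the proof of Lemma \ref{lem-y_b} on the Denert side, with Lemma \ref{lem-label-maj} taking over the role played there by \eqref{eq-Excp_r}. Recall that, in step (A) of the construction of $\Phi^{\maj}_{g,\ell}$, the index $y_b$ is the greatest integer with $a^{(b)}_{y_b}\in\lambda^{(b)}$, and that $\lambda^{(b+1)}=\lambda^{(b)}\setminus\{a^{(b)}_{y_b}\}$. To prove $y_{b+1}\le y_b$ it is enough to show that every part of $\lambda^{(b+1)}$ occurs among $a^{(b+1)}_1,\ldots,a^{(b+1)}_{y_b}$, since then the greatest index $j$ with $a^{(b+1)}_j\in\lambda^{(b+1)}$, namely $y_{b+1}$, is at most $y_b$.

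First I would record the elementary fact that, for a fixed $b$, the labels $a^{(b)}_1,\ldots,a^{(b)}_{m'+1}$ of the unstarred spaces of $w^{(b)}$ run over $\{0,1,\ldots,m'\}$ without repetition, so each integer in that range is the label of exactly one unstarred space. Because $\lambda\in\mathcal{P}(k_n,m')$, every part of $\lambda^{(b)}$ lies in $\{0,1,\ldots,m'\}$ and therefore equals $a^{(b)}_j$ for a unique $j$; by the maximality in the definition of $y_b$, such $j$ satisfies $j\le y_b$. Passing to the sub-multiset $\lambda^{(b+1)}$ only removes one copy of the value $a^{(b)}_{y_b}$, so still every part of $\lambda^{(b+1)}$ is the label $a^{(b)}_j$ of some unstarred space with $j\le y_b$; that is, $\lambda^{(b+1)}\subseteq\{a^{(b)}_1,\ldots,a^{(b)}_{y_b}\}$ as sets of values.

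Finally I would invoke Lemma \ref{lem-label-maj}, which asserts $\{a^{(b+1)}_1,\ldots,a^{(b+1)}_{y_b}\}=\{a^{(b)}_1,\ldots,a^{(b)}_{y_b}\}$; combined with the containment of the previous paragraph this yields $\lambda^{(b+1)}\subseteq\{a^{(b+1)}_1,\ldots,a^{(b+1)}_{y_b}\}$, completing the argument. I do not expect any real obstacle at this stage: the entire content has been packaged into Lemma \ref{lem-label-maj}, whose proof in turn reduces to checking that inserting an $n$ into the $y_b$-th unstarred space of $w^{(b)}$ neither alters the set $S(\cdot)$ on the portion of the word to the left of that insertion point nor permutes the labels assigned to the unstarred spaces lying to its left. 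This stability of the $g\maj_\ell$-labeling under the insertion is the one point that genuinely requires care, and it is exactly what Lemma \ref{lem-label-maj} has already secured.
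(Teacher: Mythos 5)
Your proposal is correct and follows essentially the same route as the paper: reduce the claim to the containment $\lambda^{(b+1)}\subseteq\{a^{(b+1)}_1,\ldots,a^{(b+1)}_{y_b}\}$, obtain $\lambda^{(b+1)}\subseteq\{a^{(b)}_1,\ldots,a^{(b)}_{y_b}\}$ from the maximality of $y_b$, and conclude via Lemma \ref{lem-label-maj}. The only difference is that you spell out why every part of $\lambda^{(b)}$ occurs among the labels (the labels exhaust $\{0,1,\ldots,m'\}$), a point the paper leaves implicit.
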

  \begin{proof}
  	In order to show that 
  	$y_{b+1}\leq y_b$, it suffices to show that $$\lambda^{(b+1)}\subseteq \{a^{(b+1)}_{1}, a^{(b+1)}_2, \ldots, a^{(b+1)}_{y_b}\}.$$      By the choice of $y_b$, it is easily seen that
  	$$
  	\lambda^{(b+1)} \subseteq \{a^{(b)}_{1}, a^{(b)}_2, \ldots, a^{(b)}_{y_b}\}. 
  	$$
  By	Lemma \ref{lem-label-maj}, it follows that
  $$\lambda^{(b+1)}\subseteq \{a^{(b)}_{1}, a^{(b)}_2, \ldots, a^{(b)}_{y_b}\}=\{a^{(b+1)}_{1}, a^{(b+1)}_2, \ldots, a^{(b+1)}_{y_b}\} $$ as desired, completing the proof.  
  \end{proof}

   \begin{lemma}\label{lem-Phi-maj1}
  	For all $0\leq  b<k_n$, we have
  	\begin{equation}\label{eq-maj0}
  		g\maj_\ell(w^{(b+1)}) = 	 g\maj_\ell (w^{(b)})+a^{(b)}_{y_b}
  	\end{equation}
  \end{lemma}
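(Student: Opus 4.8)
The plan is to reduce the claim to understanding one insertion step. Fix $b$, set $w=w^{(b)}=\alpha_1\cdots\alpha_{m'+b}$, $u=w^{(b+1)}$ and $c=a^{(b)}_{y_b}$, so that $u$ is obtained from $w$ by inserting a letter $n$ into the $y_b$-th unstarred space under the $g\maj_\ell$-labeling of $w$; we must show $g\maj_\ell(u)=g\maj_\ell(w)+c$, where $g\maj_\ell(w)=\sum_{i\in g\Des_\ell(w)}i+|g\Inv_\ell(w)|$. I split $S(w)=D^+(w)\sqcup E(w)$, where $D^+(w)=\{j:\alpha_{j-1}\geq\alpha_j+g,\ \alpha_j\geq\ell\}$ and $E(w)=\{j:\alpha_j<\ell\text{ or }n-g+1\leq\alpha_j\leq n-1\}$; note $i\in g\Des_\ell(w)\iff i+1\in D^+(w)$, that $|D^+(w)|=g\des_\ell(w)$, and that the two parts are disjoint because $j\in D^+(w)$ forces $\alpha_{j-1}\geq\alpha_j+g$, which excludes both $\alpha_j<\ell$ and $\alpha_j\geq n-g+1$.

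If $c=0$ the new $n$ is appended at the end: it produces no new $g$-gap $\ell$-level descent (the only new adjacency compares a letter $\leq n$ with $n$) and no new weak inversion lying in $g\Inv_\ell$ (a pair $(i,m'+b+1)$ would need $\alpha_i>n$), so $g\maj_\ell(u)=g\maj_\ell(w)=g\maj_\ell(w)+c$. If $c>0$, the $y_b$-th unstarred space sits just before a letter $\alpha_y\neq n$, and I compare the two sides piece by piece. Every gap-position $\geq y$ of $w$ shifts up by one in $u$; the gap just before the inserted $n$ is never a descent; the gap just after it is a descent of $u$ iff $\ell\leq\alpha_y\leq n-g$, i.e. iff $y\notin E(w)$; and $y-1\in g\Des_\ell(w)$ iff $y\in D^+(w)$, in which case $\ell\leq\alpha_y\leq n-g$ holds as well (since $\alpha_{y-1}\leq n$). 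Writing $R(y)=|\{i\in g\Des_\ell(w):i\geq y\}|=|\{j\in D^+(w):j>y\}|$, the descent-sum $\sum_i i$ therefore increases by $R(y)+y$ when $y\notin S(w)$, by $R(y)+1$ when $y\in D^+(w)$, and by $R(y)$ when $y\in E(w)$. On the inversion side, the only new pairs of $u$ involve the inserted $n$, and $(y,j')$ with $j'>y$ lies in $g\Inv_\ell(u)$ exactly when the $w$-letter it points to lies in $[1,\ell-1]\cup[n-g+1,n-1]$, i.e. when its position lies in $E(w)$; hence $|g\Inv_\ell(u)|-|g\Inv_\ell(w)|=|\{j\in E(w):j\geq y\}|$. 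Adding the two contributions and merging counts via $y\notin D^+(w)$ or $y\notin E(w)$ as appropriate, the total increase of $g\maj_\ell$ equals $|\{j\in S(w):j\geq y\}|$ when $y\in S(w)$, and $|\{j\in S(w):j\geq y\}|+y$ when $y\notin S(w)$.

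It remains to match this against $c$ using the labeling rules. When $y\in S(w)$ the space before $\alpha_y$ is labeled from the right among the $S(w)$-spaces, so $c=|\{j\in S(w):j\geq y\}|$ and we are done. When $y\notin S(w)$ the labeling gives $c=|S(w)|+|\{j\leq y:j\notin S(w),\ \alpha_j\neq n\}|$, and an elementary count shows that this equals $|\{j\in S(w):j\geq y\}|+y$ precisely when no letter equal to $n$ lies strictly to the left of $\alpha_y$. I expect this last point to be the main obstacle, and I would handle it by first establishing, by induction on $b$ and using $y_b\leq y_{b-1}$ from Lemma~\ref{lem-y_b-maj}, that the $n$'s of $w^{(b)}$ lying strictly to the left of its $y_b$-th unstarred space (if any) form a run immediately preceding that space — this is because $w^{(b)}$ arises from $w^{(b-1)}$ by an insertion that leaves everything to the left of the $y_{b-1}$-th unstarred space untouched. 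Since $y\notin S(w)$ forces $\alpha_{y-1}\neq n$ (then $\ell\leq\alpha_y\leq n-g$, so $\alpha_{y-1}<\alpha_y+g\leq n$), such a run must be empty, which yields $c=|\{j\in S(w):j\geq y\}|+y$ and completes the proof.
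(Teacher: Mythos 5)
Your proposal is correct and follows essentially the same route as the paper: it reduces the lemma to a per-insertion formula for the change in $g\maj_\ell$ (which the paper asserts as ``straightforward to verify'' and you work out explicitly via the split $S(w)=D^+(w)\sqcup E(w)$), identifies that the only obstruction is an occurrence of $n$ to the left of the insertion point when the label exceeds $|S(w)|$, and rules this out using $y_{b+1}\leq y_b$ from Lemma~\ref{lem-y_b-maj}. The paper excludes such an $n$ by noting the $y_{b-1}$-th unstarred space of $w^{(b)}$ sits before a letter of $S(w^{(b)})$ while the $y_b$-th does not, forcing $y_b<y_{b-1}$, whereas you track a run of $n$'s and use $\alpha_{y-1}\neq n$; these are minor variants of the same argument.
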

  \begin{proof}
  	 From   the construction of $w^{(b+1)}$ and the rules specified in the $g\maj_\ell$-labeling of $w^{(b)}$,  it is straightforward to verify   that 
  	 $$
  	 g\maj_{\ell}(w^{(b+1)})=\left\{ \begin{array}{ll}
  	 g\maj_{\ell}(w^{(b)})+a^{(b)}_{y_b}&\, \mathrm{if}\,\,   0\leq a^{(b)}_{y_b}\leq  |S(w^{(b)})|  \\
  	 [1em]
  	 g\maj_{\ell}(w^{(b)})+a^{(b)}_{y_b}+z&\, \mathrm{otherwise},
  	 \end{array}
  	 \right.
  	 $$
  	 where $z$ denotes   the number of occurrences of $n$  to the left of  the $y_b$-th unstarred space of $w^{(b)}$. 
  	To prove  (\ref{eq-maj0}), it suffices to show that $z=0$ whenever $a^{(b)}_{y_b}>  |S(w^{(b)})| $.

  	Now we assume that  $a^{(b)}_{y_b}>  |S(w^{(b)})| $.   Let $w^{(b)}=\alpha^{(b)}_1\alpha^{(b)}_2\ldots \alpha^{(b)}_{m'+b}$.  Again by the rules  in the definition of the  $g\maj_\ell$-labeling,    the $y_b$-th unstarred space must lie  immediately before  some $\alpha^{(b)}_j$ with $j\notin S(w^{(b)})$.       By Lemma  \ref{lem-y_b-maj}, 
  	we have $y_{b}\leq y_{b-1}$.   Recall that $w^{(b)}$ is obtained from $w^{(b-1)}$ by inserting an $n$ into the $y_{b-1}$-th unstarred space of $w^{(b-1)}$.  This implies that the $y_{b-1}$-th  unstarred space   must lie immediately before some $\alpha^{(b)}_j$ with $j\in S(w^{(b)})$. Hence, we have $y_{b}<y_{b-1}$.  Consequently,  there is  no occurrence of $n$   preceding  the $y_b$-th unstarred space of $w^{(b)}$, which yields that  $z=0$ and  completes  the proof. 
  	\end{proof}
  \begin{lemma}\label{lem-Phi-maj3}
  	If $g\des_\ell(w)=s$ and $g\des_\ell(w^{(k_n)})=t$, then we have
  	$$
  	\lambda_{t-s}\geq t+\delta_\ell+\gamma_g \geq \lambda_{t-s+1}.
  	$$
  	
  \end{lemma}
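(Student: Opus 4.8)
The plan is to mirror, almost verbatim, the proof of Lemma \ref{lem-Phi-den2}, with the role of Lemma \ref{lem-phi-den3} played by an analogous dichotomy describing how one insertion of an $n$ changes $g\des_\ell$. First I would record the following. When we insert an $n$ into the unstarred space immediately before $\alpha_j$ in $w^{(b)}$, it destroys a $g$-gap $\ell$-level descent at position $j-1$ exactly when $j-1$ was already such a descent of $w^{(b)}$ (the inserted $n$ is the maximum letter, so nothing on its left can survive as a descent), and it creates a $g$-gap $\ell$-level descent at the new position $j$ exactly when $\ell\le \alpha_j\le n-g$. These two effects cancel precisely on the set $S(w^{(b)})$ used in the $g\maj_\ell$-labeling: if $j-1$ is a descent then $\alpha_{j-1}\ge \alpha_j+g$ together with $\alpha_{j-1}\le n$ forces $\alpha_j\le n-g$, so a compensating descent is created; the remaining elements of $S(w^{(b)})$ (those with $\alpha_j<\ell$ or $n>\alpha_j\ge n-g+1$) neither destroy nor create; while for $j\notin S(w^{(b)})$ exactly one descent is created, and insertion into the last space (label $0$) does nothing. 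Since $|S(w^{(b)})|=g\des_\ell(w^{(b)})+\delta_\ell+\gamma_g$ and the $g\maj_\ell$-labeling assigns the spaces before $\{\alpha_j:j\in S(w^{(b)})\}$ together with the final space the labels $\{0,1,\dots,|S(w^{(b)})|\}$, this yields
\[
g\des_\ell(w^{(b+1)})=\begin{cases} g\des_\ell(w^{(b)}), & 0\le a^{(b)}_{y_b}\le g\des_\ell(w^{(b)})+\delta_\ell+\gamma_g,\\ g\des_\ell(w^{(b)})+1, & \text{otherwise},\end{cases}
\]
which is the exact counterpart of Lemma \ref{lem-phi-den3}.

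With this in hand the rest follows the Denert argument step for step. I would set $k=t-s$, note that $g\des_\ell(w^{(0)})=s,\dots,g\des_\ell(w^{(k_n)})=t$ is nondecreasing and increases by $1$ at exactly $k$ steps $i_1<i_2<\dots<i_k$, so that $g\des_\ell(w^{(i_j)})=s+j-1$, and observe that for every $b\notin\{i_1,\dots,i_k\}$ the displayed dichotomy forces $a^{(b)}_{y_b}\le g\des_\ell(w^{(b)})+\delta_\ell+\gamma_g\le t+\delta_\ell+\gamma_g$, whereas for $b=i_j$ it forces $a^{(i_j)}_{y_{i_j}}\ge g\des_\ell(w^{(i_j)})+\delta_\ell+\gamma_g+1=s+j+\delta_\ell+\gamma_g$; in particular $a^{(i_k)}_{y_{i_k}}\ge t+\delta_\ell+\gamma_g$. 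To upgrade this to $a^{(i_j)}_{y_{i_j}}\ge t+\delta_\ell+\gamma_g$ for all $j$, I would prove the monotonicity $a^{(i_{j+1})}_{y_{i_{j+1}}}\le a^{(i_j)}_{y_{i_j}}$ for $1\le j<k$: at step $i_j$ the insertion uses a space whose label exceeds $|S(w^{(i_j)})|$, so Lemma \ref{lem-label-maj} gives $a^{(i_j)}_{y_{i_j}}=\max\{a^{(i_j)}_1,\dots,a^{(i_j)}_{y_{i_j}}\}$; and since $a^{(i_{j+1})}_{y_{i_{j+1}}}$ is a part of $\lambda^{(i_{j+1})}\subseteq\lambda^{(i_j)}$, while in $w^{(i_j)}$ the $m'+1$ unstarred spaces carry the distinct labels $0,1,\dots,m'$ and $y_{i_j}$ is by definition the largest index carrying a label lying in $\lambda^{(i_j)}$, the value $a^{(i_{j+1})}_{y_{i_{j+1}}}$ occurs among $a^{(i_j)}_1,\dots,a^{(i_j)}_{y_{i_j}}$, and the maximality finishes it. Iterating gives $a^{(i_j)}_{y_{i_j}}\ge a^{(i_k)}_{y_{i_k}}\ge t+\delta_\ell+\gamma_g$ for all $1\le j\le k$.

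To conclude I would use that $\{a^{(b)}_{y_b}:0\le b<k_n\}$ is exactly the multiset of parts of $\lambda$: the $k$ parts $a^{(i_1)}_{y_{i_1}},\dots,a^{(i_k)}_{y_{i_k}}$ are each $\ge t+\delta_\ell+\gamma_g$, so at least $t-s$ parts of $\lambda$ are $\ge t+\delta_\ell+\gamma_g$, giving $\lambda_{t-s}\ge t+\delta_\ell+\gamma_g$; conversely any part strictly larger than $t+\delta_\ell+\gamma_g$ must be one of these $k$ special ones, so at most $t-s$ parts exceed it, giving $\lambda_{t-s+1}\le t+\delta_\ell+\gamma_g$. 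The main obstacle is the first step: unlike on the Denert side, where Lemma \ref{lem-phi-den3} is already available, here I must verify by hand that the creation and destruction of $g$-gap $\ell$-level descents caused by inserting an $n$ cancel exactly on the set $S(w^{(b)})$, the delicate case being a descent position $j-1$, where one must notice that $\alpha_{j-1}\ge\alpha_j+g$ and $\alpha_{j-1}\le n$ already force $\alpha_j\le n-g$ and hence a compensating creation. Everything after that is the same bookkeeping as in the proof of Lemma \ref{lem-Phi-den2}.
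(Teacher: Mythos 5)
Your proposal is correct and follows essentially the same route as the paper: identify the $k=t-s$ steps where $g\des_\ell$ increases, bound the chosen labels by $|S(w^{(b)})|=g\des_\ell(w^{(b)})+\delta_\ell+\gamma_g$ on the other steps, anchor the last increase step at $\geq t+\delta_\ell+\gamma_g$, and propagate backwards via the monotonicity $a^{(i_{j+1})}_{y_{i_{j+1}}}\leq a^{(i_j)}_{y_{i_j}}$ coming from the max-property in Lemma \ref{lem-label-maj}. The only difference is that you explicitly verify the creation/cancellation dichotomy for $g$-gap $\ell$-level descents under insertion of an $n$ (which the paper leaves implicit in ``the rules specified in the $g\maj_\ell$-labeling''), and your case analysis there is accurate.
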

  \begin{proof}
  By the construction of $w^{(b)}$,  we     have either  $g\des_\ell(w^{(b)})=g\des_\ell(w^{(b-1)})$ or $g\des_\ell(w^{(b)})=g\des_\ell(w^{(b-1)})+1$ for all $1\leq b\leq k_n$.  
  	Let $t-s=k$. 
  	Since $g\des_\ell(w^{(k_n)})-g\des_\ell(w)=k$,   there are exactly $k$  integers $b$ such that $g\des_\ell(w^{(b+1)})=g\des_\ell(w^{(b)})+1$.    
  	Suppose that such integers are given by  $i_1, i_2, \ldots, i_{k}$ with $i_1<i_2<\cdots<i_{k}$.  
  	By the rules specified in the $g\maj_\ell$-labeling, we have
  	$$a^{(b)}_{y_{b}}\leq |S(w^{(b)})|  =g\des_\ell(w^{(b)})+\gamma_g+\delta_\ell\leq g\des_\ell(w^{(k_n)})+\gamma_g+\delta_\ell=t+\gamma_g+\delta_\ell$$ for all 
  	$b\neq i_j $ and $1\leq j\leq k$.  In order to show that 	$\lambda_{k}\geq t+\delta_\ell+\gamma_g \geq \lambda_{k+1}$, it suffices to show that 	$a^{(z)}_{y_{z}}\geq t+\gamma_g+\delta_\ell $  when  $z=i_j$ for  any $1\leq j\leq k$.

  	By the choice of $i_k$,  we have $g\des_\ell(w^{(i_k+1)})=g\des_\ell(w^{(i_k)})+1$  and $g\des_\ell(w^{(i_k+1)})=t$.  
  	Again by the rules specified in the $g\maj_\ell$-labeling,  we have
  	$$
  	a^{(i_{k})}_{y_{{i_{k}}}}\geq  |S(w^{(i_k)})|+1 = 1+g\des_\ell(w^{(i_{k})})+\gamma_g+\delta_\ell = t+\gamma_g+\delta_\ell. 
  	$$
  	In the following, we aim to show that
  	$a^{(i_{j+1})}_{y_{{i_{j+1}}}}\leq a^{(i_{j})}_{y_{{i_{j}}}}$ for all $1\leq j <k$. 
  	Since $g\des_\ell(w^{(i_{j}+1)})=g\des_\ell(w^{(i_j)})+1$, again by the rules specified in the $g\maj_\ell$-labeling, we deduce that  $$a^{(i_{j})}_{y_{{i_{j}}}}\geq |S(w^{(i_j)})|+1= g\des_\ell(w^{(i_{j})})+\gamma_g+\delta_\ell+1$$  and thus we have
  	\begin{equation}\label{eq-label-maj}
  		a^{(i_j)}_{y_{i_j}}=\max\{a^{(i_j)}_1,  a^{(i_j)}_2, \ldots, a^{(i_j)}_{y_{i_j}}\}. 
  	\end{equation}
  by Lemma \ref{lem-label-maj}. 
  	Note that $y_{i_j}$ is the greatest integer    such that $a^{(i_j)}_{y_{i_j}}\in \lambda^{(i_j)}$.  Since $a^{(i_{j+1})}_{y_{i_{j+1}}}\in \lambda^{(i_j)}$,   we have $a^{(i_{j+1})}_{y_{i_{j+1}}}\in \{a^{(i_j)}_1,  a^{(i_j)}_2, \ldots, a^{(i_j)}_{y_{i_j}}\}.$   This combined with (\ref{eq-label-maj}) yields that 
  	$a^{(i_{j+1})}_{y_{i_{j+1}}}\leq a^{(i_j)}_{y_{i_j}}$ as desired,  
  	completing the proof. 
  	
  \end{proof}
  
  Now we are in the position to finish the proof of Theorem   \ref{th-Phi-maj}.
  
  \noindent{\bf Proof of Theorem  \ref{th-Phi-maj}.}
  Properties~\eqref{eq-Phi1} and ~\eqref{eq-Phi2}  follow  directly from Lemmas \ref{lem-Phi-maj1}  and  \ref{lem-Phi-maj3}.  By cardinality reasons, it remains to show that $\Phi^{\maj}_{g,\ell}$ is an injection.      Here we retain  all the notations  in the definition of $\Phi^{\maj}_{g,h}$.   By the construction of $\Phi^{\maj}_{g,h}$, to recover $(w, \lambda)$, it suffices to show that  $(w^{(b)}, a^{(b)}_{y_b})$ can be retrieved from $w^{(b+1)}$ for all $0\leq b<k_n$.  Lemmas \ref{lem-y_b-maj} and  \ref{lem-Phi-maj1}  ensure that   we can recover $w^{(b)}$ from $w^{(b+1)}$  by deleting  the leftmost $n$ and recover $a^{(b)}_{y_b}$ by letting $a^{(b)}_{y_b}=g\maj_\ell(w^{(b+1)})-g\maj_{\ell}(w^{(b)})$. This concludes the proof.   \qed

   \section*{Acknowledgments}  
   The work  was supported by
   the National Natural
   Science Foundation of China grants 12471318 and 12071440.



\begin{thebibliography}{99}
		
		\bibitem{Andrews}
		G.E. Andrews, {\it The Theory of Partitions}, Addison-Wesley Publishing Co., 1976.
		
	 \bibitem{Chen1}
	 W.Y.C. Chen,   I.M. Gessel,  C.H. Yan and A.L.B. Yang,
	 A major index for matchings and set partitions,  J. Combin. Theory Ser. A, {\bf 115} (2008),  1069--1076.
	 
	 \bibitem{Chen2}
	 W.Y.C. Chen,   S. Poznanovic,  C.H. Yan and A.L.B. Yang,
	 Major index for $01$-fillings of moon polyominoes,  J. Combin. Theory Ser. A, {\bf 117} (2010),  1058--1081.
	 
		\bibitem{Denert}
		M. Denert, The genus zeta function of hereditary orders in central simple algebras over global fields, Math. Comput., {\bf 54} (1990),   449--465.
		
	
	
		\bibitem{Foata-Zeilberger}
		D. Foata and D. Zeilberger,
		Denert's permutation statistic is indeed Euler-Mahonian, 
		Stud. Appl. Math., {\bf 83} (1990),  31--59.
		
		\bibitem{Han}
		G.-N. Han, 
		Une nouvelle bijection pour la statistique de Denert, 
		C. R. Acad. Sci. Paris, Ser. I, {\bf  310} (1990), 493--496.
		
		\bibitem{Han1}
		G.-N. Han, Une transformation fondamentale sur les r\'earrangements de mots, Adv. in Math., {\bf 105} (1994), 26--41. 
		
		\bibitem{Huang-lin-yan jcta}
		K. Huang, Z. Lin and S.H.F. Yan, On a conjecture concernig the $r$-Euler-Mahonian statistic on permutations, J. Combin. Theory Ser. A, {\bf 212} (2025), Article 106008. 
		
			\bibitem{Huang-lin-yan2}
		K. Huang, Z. Lin and S.H.F. Yan,   On $r$-Euler-Mahonian statistics for multipermutations,   J. Combin. Theory Ser. A, {\bf 221} (2026), Article 106165. 
		
		
		\bibitem{Huang-yan AAM}
		K. Huang and S.H.F. Yan, Further results on  $r$-Euler-Mahonian statistics,  Adv. in Appl. Math.,  {\bf 167} (2025), Article 102882.   
		
	 
		
		
		\bibitem{Liu3}
		S.H. Liu, New $r$-Euler-Mahonian statistics involving Denert's statistic,  arXiv:2508.12717v1.  
		
		\bibitem{Liujcta}
		S.H. Liu,  $r$-Euler-Mahonian statistics on permutations,    J. Combin. Theory Ser. A,  {\bf 208} (2024),  Article  105940.
		
	 \bibitem{Liu1}
	 S.H. Liu,  MacMahon's equidistribution theorem for $k$-Stirling permutations,   Adv. in Appl. Math.,  {\bf 128} (2021), Article 102193.
	 
	 \bibitem{Liu2}
	 S.H. Liu, The Haglund-Remmel-Wilson identity for $k$-Stirling permutations,  European J. Combin.,  {\bf 110} (2023),
	 Article 103676.
		
		
		
		\bibitem{Mac}
		P. MacMahon, Two applications of general theorems in combinatory analysis, Proc. London Math. Soc., {\bf 15} (1916),  314--321.
		
	  \bibitem{RW}
	 J. Remmel and A.T. Wilson, An extension of MacMahon's  equidistribution
	 theorem to ordered set partitions,  J. Combin. Theory Ser. A,  {\bf 134} (2015), 242--277.
		
		\bibitem{Raw}
		D.  Rawlings,  The $r$-major index,   J. Combin. Theory Ser. A,  {\bf 31} (1981), 175--183.
		
		\bibitem{Raw1}
		D. Rawlings, The $(q, r)$-Simon Newcomb problem,  Linear Multilinear Algebra,  {\bf 10} (1981),  253--260.
		
		  \bibitem{Wilson}
		A.T. Wilson, An extension of MacMahon's equidistribution theorem to ordered multiset partitions, Electron. J. Comb., {\bf 23} (2016), \#P1.5.
	 
	 \bibitem{Yan}
	 S.H.F. Yan, S. Ling, L. Yang and R.D.P. Zhou, Further extensions of Haglund-Remmel-Wilson
	 identity, Adv. in Appl. Math.,   {\bf 144} (2023), Article 102459.
	 
		
	  
		
	\end{thebibliography}
\end{document}